\numberwithin{equation}{section}
\newtheorem{theorem}{Theorem}[section]
\newtheorem{proposition}[theorem]{Proposition}
\newtheorem{lemma}[theorem]{Lemma}
\newtheorem{definition}[theorem]{Definition}
\newtheorem{remark}{Remark}[section]
\def\neweq#1{\begin{equation}\label{#1}}
\def\endeq{\end{equation}}
\def\eq#1{(\ref{#1})}
\newcommand{\R}{\mathbb{R}}
\newcommand{\N}{\mathbb{N}}
\newcommand{\eps}{\varepsilon}
\newcommand{\well}{\widetilde\ell}
\newcommand{\SI}{{\bm \sigma}}
\newcommand{\e}{{\bf e}}
\newcommand{\EC}{\overset{E}{\longrightarrow}}
\newcommand{\EEC}{\overset{EE}{\longrightarrow}}
\newcommand{\CC}{\overset{C}{\longrightarrow}}
\begin{document}

\title{An orthotropic plate model for decks of suspension bridges}

\author{Alberto Ferrero}

\address{\hbox{\parbox{5.7in}{\medskip\noindent{Alberto Ferrero, \\
Universit\`a del Piemonte Orientale, \\
        Dipartimento di Scienze e Innovazione Tecnologica, \\
        Viale Teresa Michel 11, 15121 Alessandria, Italy. \\[5pt]
        \em{E-mail address: }{\tt alberto.ferrero@uniupo.it}}}}}

\date{\today}

\maketitle

\begin{abstract} The main purpose of the present paper is to compare two different kinds of approaches in modeling the deck of a suspension bridge: in the first approach we look at the deck as a rectangular plate and in the second one we look at the deck as a beam for vertical deflections and as a rod for torsional deformations.
Throughout this paper we will refer to the model corresponding to the second approach as the \textit{beam-rod model}.
In our discussion, we observe that the beam-rod model has more \textit{degrees of freedom} if compared with the isotropic plate model. For this reason the beam-rod model is supposed to be more appropiate to describe the behavior of the deck of a real suspension bridge. A possible strategy to make the plate model more efficient could be to relax the isotropy condition with a more general condition of orthotropy, which is expected to increase the degrees of freedom in view of the larger number of elastic parameters. In this new setting, a comparison between the two approaches becomes now possible.

Basic results are proved for the suggested problem, from existence and uniqueness of solutions to spectral properties.
We suggest realistic values for the elastic parameters thus obtaining with both approaches similar responses in the static and dynamic behavior of the deck. This can be considered as a preliminary article since many work has still to be done with the perspective of formulating models for a complete suspension bridge which take into account not only the deck but also the action on it of cables and hangers. With this perspective, a section is devoted to possible future developments.

\end{abstract}

\vspace{11pt}

\noindent
{\bf Keywords:} Linear elasticity, orthotropic materials, elastic plates, elastic beams, elastic rods.

\vspace{6pt}
\noindent
{\bf 2020 Mathematics Subject Classification:} 35A15, 35G15, 35L25, 74B05, 74K10, 74K20.

\section{Introduction} \label{s:introduction}

The main purpose of this paper is to develop an alternative approach in modeling decks of suspension bridges and to compare it with the existing ones. In recent years, several authors dealt with different models describing static and dynamic behavior of suspension bridges; we quote \cite{Gazzola-Libro} for a survey on the scientific literature on this topic.

\begin{figure}[th]
\begin{center}
 {\includegraphics[scale=0.45]{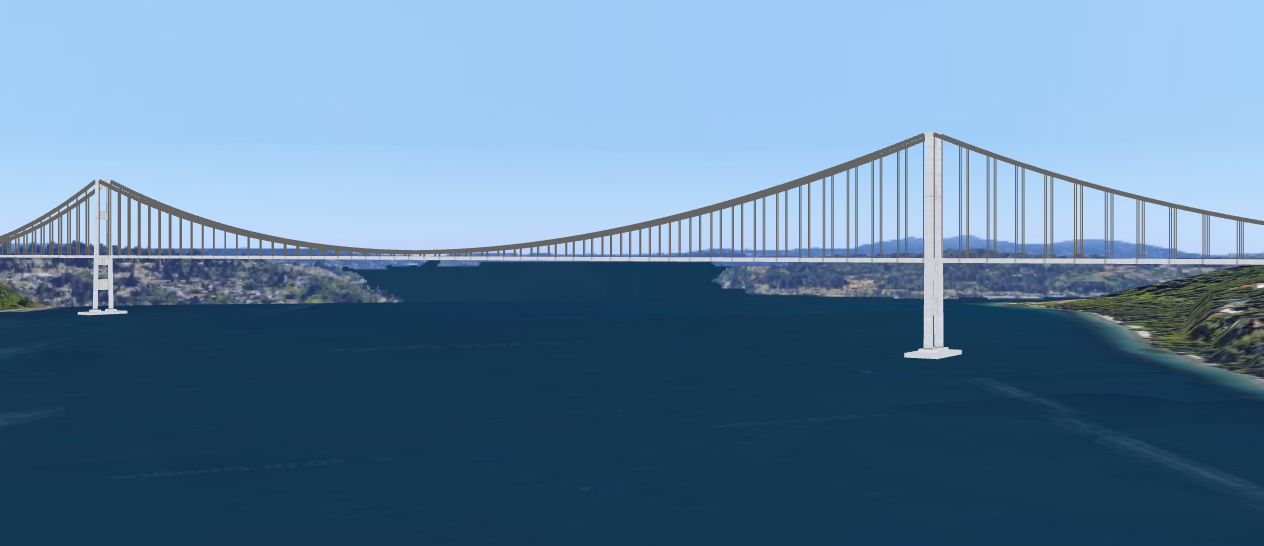}}
\caption{A typical suspension bridge with two towers and three spans.} \label{tacoma}
\end{center}
\end{figure}

A typical suspension bridge, like the one in Figure \ref{tacoma}, is made up of several parts: two towers, one central span, two sides spans, two cables for each span, the deck and the hangers connecting the cables to the deck. At the present state of art, most of the papers that can be found in the literature, are essentially focused on the behavior of a single span, typically the central one which, for obvious reasons, is the one which can show a higher level of instability due to its length. We have in mind what happened for example to the Tacoma Narrows Bridge when in 1940, its central span collapsed in the underlying channel, see \cite{Tacoma}. The Federal Report \cite{ammann} by O. H. Ammann, T. von K\'arm\'an, G. B. Woodruff, considers ``the crucial event in the collapse to be the sudden change from a vertical to a torsional mode of oscillation''.
In \cite{ammann} one can find a first one-dimensional model describing vertical oscillations but, as the above statement by the authors of \cite{ammann} suggests, models describing also torsional deformations assume a great relevance.

More recently, the authors of \cite{LazMcK,McKWa,McKWa-2} suggested one-dimensional models consisting of a beam suspended by hangers: the action of the hangers is described by a restoring force which is proportional to the stretching when the hangers are stretched and which vanishes when the hangers are slackened.

In \cite{McK,McKT} the authors suggested models in which both vertical and torsional oscillations were considered simultaneously: the cross section of the deck was described by a rod free to rotate about its center and to translate vertically. They were able to describe the sudden transition from standard vertical oscillations to destructive torsional oscillations.

Later, the authors of \cite{AG-Many-Rods} reconsidered the model by \cite{McK,McKT} describing the entire bridge by mean of a finite number of parallel rods linked to the two nearest neighbors rods with attractive linear forces representing resistance
to longitudinal and torsional stretching. The sudden appearance of torsional oscillations was highlighted also in the multiple rods approach.

The aim the present paper is to compare two other kinds of approaches in the description of the deck of a bridge: in the first one, a single equation is considered by interpreting the deck as an isotropic plate; in the second one, a system of two equations is introduced with the purpose of describing vertical deflections and torsional deformations respectively.
We now proceed with a detailed description of these two approaches.

Let us start with the system model, in which a beam is used to describe vertical oscillations of the deck and a rod is used to describe its torsional oscillations. The resulting elastic energy can be split into two components given by the bending energy of the beam and the torsional energy of the rod, namely
\begin{equation} \label{eq:energy-v-t}
\mathbb E_{BT}(\psi,\theta)=\frac{EI}2 \int_0^L |\psi''(x)|^2 \, dx+\frac{\mu
K}2\int_0^L |\theta'(x)|^2 \, dx
\end{equation}
where $\psi=\psi(x)$ denotes the vertical displacement of the middle line of the deck while $\theta=\theta(x)$ denotes the angle of torsion, as shown in Figure \ref{f:vertical-torsional}.
The authors of \cite{BeGa} called this model {\it fish bone} for the simultaneous presence of the longitudinal midline of the road and the transversal cross sections, as one can see from \cite[Figure 1]{BeGa}.

\begin{figure}[th]
\begin{center}
 {\includegraphics[scale=0.30]{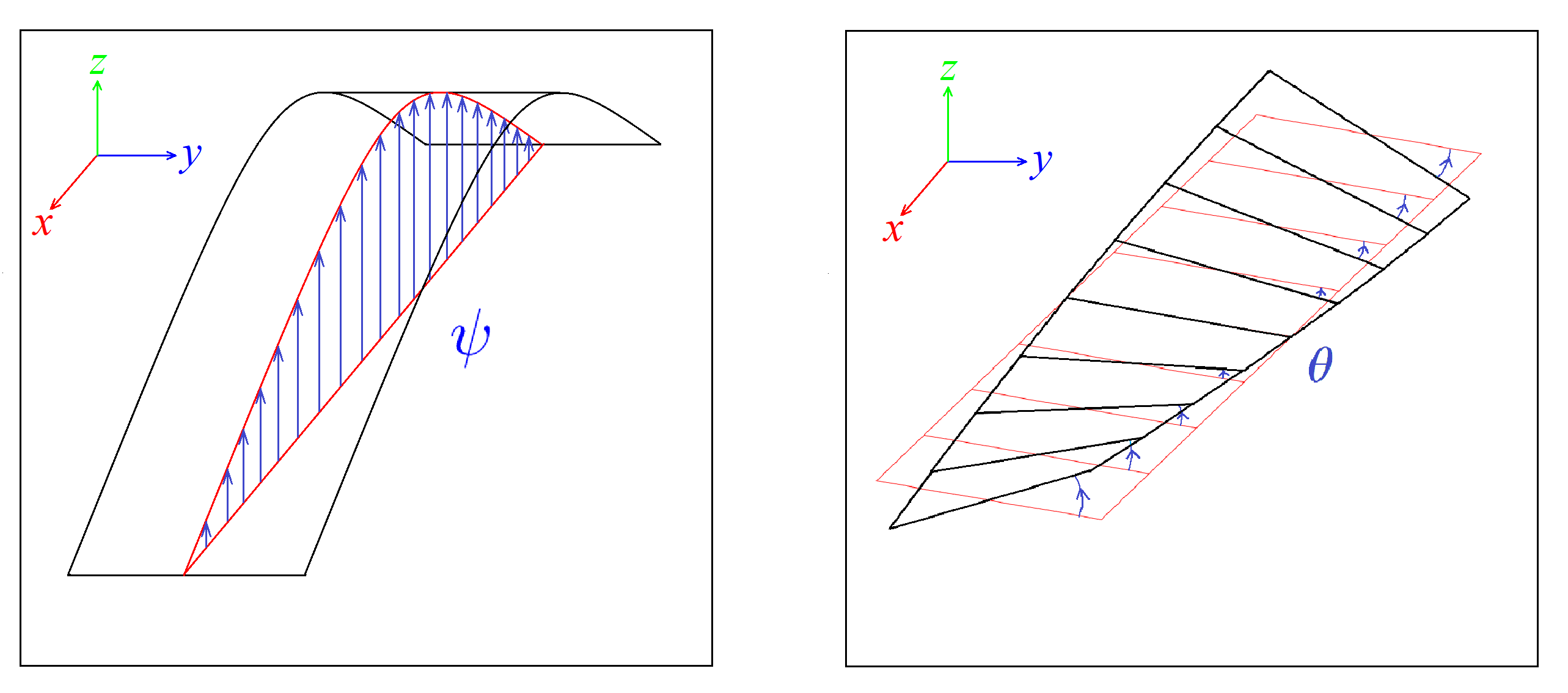}}
\caption{The vertical displacement $u$ and the torsion angle $\theta$ of the deck.} \label{f:vertical-torsional}
\end{center}
\end{figure}

The coefficients appearing in \eqref{eq:energy-v-t} come from the theory of linear elasticity applied to real structures like beams and rods: $\mu$ is one of the two Lam\'e constants and precisely the shear modulus which can be also expressed in terms of the Young modulus $E$ and the
Poisson ratio $\nu$, namely $\mu=\frac E{2(1+\nu)}$ as one can see from \cite[(5.9)]{LaLi}; here the deck of the bridge at the rest position is described by the rectangle parallelepiped $(0,L)\times (-\ell,\ell)\times \left(-\frac d2, \frac d2\right)$ and the coefficients $I$ and $K$ are given respectively by
\begin{equation} \label{eq:I-K}
  I=\int_{\mathcal S} z^2 dydz=\frac{d^3 \ell}6 \qquad \text{and} \qquad K=4\int_{\mathcal S} |\nabla \chi(y,z)|^2 \, dy dz
\end{equation}
where $\mathcal S=(-\ell,\ell)\times \left(-\frac d 2,\frac d 2\right)$ and $\chi$ is the solution of
\begin{equation*}
\begin{cases}
-\Delta \chi=1 & \quad \text{in } \mathcal S \, ,\\
\chi=0 & \quad \text{on } \partial\mathcal S \, ,
\end{cases}
\end{equation*}
see \cite[Paragraph 16]{LaLi} for more details.

More precisely, looking at Figure \ref{f:vertical-torsional}, we see that the deck of the bridge has length $L$, width $2\ell$ and thickness $d$, so that the coefficient $I$ represents the moment of inertia of the cross section of the deck with respect to the symmetry axis parallel to the $y$ axis and $K$ is the torsional constant of the deck which multiplied by $\mu$ gives its torsional rigidity.

Applying to the deck a vertical load per unit of length $F$ and a moment of forces per unit of length $M$, by \eqref{eq:energy-v-t}, the Euler-Lagrange equations become
\begin{equation} \label{eq:system-u-theta}
  \begin{cases}
    EI \psi''''=F  & \qquad \text{in } (0,L) \, ,\\[6pt]
    -\mu K \theta''=M & \qquad \text{in } (0,L) \, .
  \end{cases}
\end{equation}
For more details on this model we quote \cite{AG15, AG17, BeGa, LaLi} and the references therein.
In the sequel we will refer to \eqref{eq:system-u-theta} as the \textit{beam-rod model}.

As mentioned above, in the other approach the deck is interpreted as a plate of length $L$, width $2\ell$ and thickness $d$. Denoting as above by $E$ and $\nu$ the Young modulus and Poisson ratio respectively, and by $u=u(x,y)$ the vertical displacement of the middle surface of the plate, the resulting elastic bending energy becomes
\begin{equation}\label{eq:plate-isotropic}
  \mathbb E_B(u)=\frac{Ed^3}{24(1-\nu^2)} \int_{(0,L)\times (-\ell,\ell)}\left(\nu |\Delta u|^2+(1-\nu)|D^2 u|^2\right)dxdy
\end{equation}
where $D^2 u$ denotes the Hessian matrix of $u$ and $|D^2 u|=\left(u_{xx}^2+2u_{xy}^2+u_{yy}^2\right)^{1/2}$ is its Euclidean norm, obtained by interpreting the matrix as a vector of four components. For more details on the plate model see \cite{BeBuoGa, BeFeGa, FeGa, LaLi}.
This is known as the Kirchhoff-Love model for a plate made of an isotropic material, see \cite{Kirchhoff, Love}.

Denoting by $f$ a vertical load per unit of surface, the corresponding Euler-Lagrange equation becomes
\begin{equation} \label{eq:plate-iso-EuLa}
\frac{Ed^3}{12(1-\nu^2)} \, \Delta^2 u=f \qquad \text{in } \Omega=(0,L)\times (-\ell,\ell) \, .
\end{equation}
The constant $\frac{Ed^3}{12(1-\nu^2)}$ in front of the biharmonic operator $\Delta^2$ represents the rigidity of the plate.

In order to have a more detailed picture on the recent literature on rectangular plates and applications in models for decks of bridges, we quote \cite{AnGa, BeBuGaZu, BeFaFeGa, BoGaMo, ChGaGa} and the references therein.

From a comparison between \eqref{eq:system-u-theta} and \eqref{eq:plate-iso-EuLa}, it appears clear that the beam-rod model allows more degrees of freedom since the bending rigidity and the torsional rigidity of the deck may be chosen independently one from the other; conversely, in the model of the plate the choice of the Young modulus and the Poisson ratio affects the bending and torsional rigidities simultaneously.

As a result, we have that two models describing the same structure with the same sizes could produce quite different responses.
However, it is well known that concerning vertical displacements, the behavior of a plate of small width is comparable with the one of the beam with the same sizes; for a rigorous result one can look at \cite[Theorem 3.3]{FeGa}.
On the other hand, concerning the torsional behavior, we believe that, even for small width, the plate equation and the second equation in \eqref{eq:system-u-theta} could produce different responses.

In order to overcome this inconvenient, once the bending rigidity of the plate is determined, one can try to modify in a forced way the width of the plate thus giving rise to the model of an ideal plate with a non realistic width.
This is not the path we want to follow.
Indeed, we believe that a more realistic way to proceed is to formulate a new model of plate exhibiting an anisotropic behavior. Considering a plate made of an anisotropic material is expected to produce more parameters in the model as a consequence of the increased number of elastic coefficients of the material.
Such an increased degree of freedom in the choice of the parameters allows to find a suitable combination of them producing a more realistic model of plate with a behavior closer to the one of the beam-rod model.

Our idea is to construct a model of plate made of an orthotropic material (see Section \ref{s:orthotropic} for the rigorous definition of orthotropy) exhibiting more rigidity in the $x$ direction than in the $y$ and $z$ directions.
The resulting bending energy of the orthotropic plate can be found in \eqref{eq:E-B}. The corresponding Euler-Lagrange equation for the plate subject to a vertical load per unit of surface then becomes
\begin{equation*}
  \frac{d^3\mathcal K}{12}\left(\Delta^2 u+\kappa \frac{\partial^4 u}{\partial x^4}\right)=f
\end{equation*}
where $\mathcal K=\frac{E_2}{1-\nu_{12}\nu_{21}}$ and $\kappa=\frac{E_1-E_2}{E_2}$ being $E_1>E_2=E_3$ the Young moduli in the $x$, $y$, $z$ directions respectively, and $\nu_{12}$ and $\nu_{21}$ the Poisson ratios relative to the $x$ and $y$ directions, see Section \ref{ss:Ortho-Mat} for the correct definitions. In this way we may write the equation in the more familiar form
\begin{equation} \label{eq:familiar}
 \frac{E_1 d^3}{12(1-\nu_{12} \nu_{21})} \, \frac{\partial^4 u}{\partial x^4}
 +\frac{E_2 d^3}{6(1-\nu_{12} \nu_{21})} \, \frac{\partial^4 u}{\partial x^2\partial y^2}
 +\frac{E_2 d^3}{12(1-\nu_{12} \nu_{21})} \, \frac{\partial^4 u}{\partial y^4}=f \, ,
\end{equation}
see also Remark \ref{r:1}.

In Section \ref{s:orthotropic} we make a survey of some well known notions about elastic anisotropic materials like the anisotropic Hooke's law and the related {\it stiffness matrix}. We explain how the stiffness matrix looks like in the case of an orthotropic material, we show how the elastic energy per unit of volume can be expressed in terms of the components of the strain tensor and we describe the meaning of the elastic coefficients involved in the orthotropic Hooke's law, still named Young moduli (one for each coordinate axis) and Poisson ratios (one for each combination of two coordinate axes); other coefficients completely independent of the previous ones are the so called {\it shear moduli}.
The last part of Section \ref{s:orthotropic} is devoted to a description of an orthotropic material with a two-dimensional symmetry (with respect to planes parallel to the $yz$ plane) and a one-dimensional reinforcement in the orthogonal direction (the $x$ axis).

Having in mind the explanations given in Section \ref{s:orthotropic} about orthotropic materials, in Section \ref{s:orthotropic-plate} we construct a model of orthotropic plate with a one-dimensional reinforcement, giving rise to the equation \eqref{eq:plate-iso-EuLa}. Since our purpose is to describe the deck of a bridge, the rectangular plate is supposed to be hinged at the two shorter edges which corresponds to have a solution of \eqref{eq:plate-iso-EuLa} subject to homogeneous Navier boundary conditions on these two edges and free boundary conditions on the other two edges.
We prove in Theorem \ref{t:Lax-Milgram} existence and uniqueness of weak solutions for the corresponding boundary value problem; in the same statement we also prove a regularity result based on the classical elliptic estimates by \cite{adn}.

In the same section we deal with another crucial part, consisting in the study of the spectral properties of the operator
$\Delta^2+\kappa \partial^4_x$ coupled with the mixed boundary conditions (Navier boundary conditions on the shorter edges and free boundary conditions on the other two edges). An implicit representation of the eigenvalues and of the corresponding eigenfunctions is provided in Theorem \ref{t:eigenvalues}.

In Section \ref{s:small-width}, we study the behavior of an orthotropic rectangular plate in which its width is small if compared with its length, as it happens in a real bridge. In Theorem \ref{t:conv-poisson}, we prove that the solution $u_\ell$ of the plate equation converges to the solution of the beam equation when $\ell\to 0$.
This result extends to the orthotropic case an analogous result proved in \cite{FeGa} in the case of the isotropic plate.

The second main result of Section \ref{s:small-width} is Theorem \ref{t:spectral-convergence}, where it is proved convergence of the eigenvalues of $\Delta^2+\kappa \partial^4_x$ to the eigenvalues of the beam eigenvalue problem \eqref{eq:beam-eigenvalue}.

Section \ref{s:static} is devoted to the comparison between the orthotropic plate model and the beam-rod model. Our purpose is to determine the values of the parameters in the orthotropic plate equation \eqref{eq:familiar} which make the behavior of the plate completely comparable with the one of the beam-rod system.

We recall that $E$ is the Young modulus used in the beam model and that $E_1$ and $E_2=E_3$ denote the Young moduli of the plate with respect to the $x$, $y$ and $z$ directions respectively. We simply denote by $\nu$ the Poisson ratio $\nu_{12}$ and
we assign to it the value $0.2$ as one can see from \eqref{eq:coeff-E1-E2-nu}. By \eqref{eq:id0}, we infer that the other Poisson ratio $\nu_{21}$ can be expressed in terms of $\nu_{12}$, $E_1$ and $E_2$.
In this way, looking at \eqref{eq:familiar}, we see that the behavior of the plate is completely determined once we find appropriate values for $E_1$ and $E_2$.

In order to fulfill this purpose, we first force the plate to have the same bending rigidity of the beam, thus obtaining $E_1=E$.
Then, testing the torsional rigidity of the plate and comparing it with the torsional rigidity $\mathcal R_T=\mu \, K$ in \eqref{eq:system-u-theta}, we achieve our goal as one can see by \eqref{eq:val-E2} where $E_2$ is expressed in terms of $E$, $I$, $\mathcal R_T$ and $\nu$.

The numerical values of the bending rigidity $EI$ and the torsional rigidity $\mathcal R_T$ are obtained having in mind the structural parameters of the Tacoma Narrows Bridge collapsed in 1940. This numerical values for the Tacoma Narrows Bridge are deduced from the papers \cite{AG17,Plaut,Tullini}.

As the reader can realize looking at \eqref{eq:coeff-E1-E2-nu}, the procedure described above produces a Young modulus $E_1$ which is of two orders of magnitude larger than the Young modulus $E_2$, thus showing that the plate exhibits a strongly anisotropic behavior and confirming that the more popular isotropic plate model is not completely suitable to describe torsional oscillations of the deck of a bridge.

Once we have obtained the numerical values of all the parameters is then possible to compare the responses of the two static models testing them with suitable loads generating either vertical or torsional displacements.
We see that the difference between the responses corresponding to the two different approaches, is negligible in the case of vertical displacements and more relevant but relatively small in the case of torsion.

In Section \ref{s:frequence}, we also analyze and compare the frequencies of free vibrations of the deck obtained with the two different approaches and as a result we confirm what we have already observed in Section \ref{s:static} about static behavior: a negligible difference between the frequencies of vertical vibration and a more relevant but relatively small difference concerning torsional vibrations.

We devoted Section \ref{s:develop} to possible future developments in the direction of formulating an evolution model describing the dynamic behavior of the entire central span of a suspension bridge comprehensive of the deck, of the two cables, of the hangers and of their reciprocal dynamic behavior.
After that, it will probably be possible to have a more clear picture of how much the two approaches treated in this article are comparable.
The remaining sections are devoted to the proofs of the main results.


\section{Orthotropic materials} \label{s:orthotropic}

\subsection{Basic notions on the theory of anisotropic materials}
We recall some notations from the theory of linear elasticity. We denote by $\SI=(\sigma_{ij})$ the stress tensor and by ${\bf e}=(e_{ij})$  the strain tensor with $i,j\in \{1,2,3\}$.
More precisely, by strain tensor ${\bf e}$ we actually mean the \textit{linearized strain tensor}, i.e.
\begin{equation} \label{eq:strain-tensor}
e_{ij}=\frac 12 \left(\frac{\partial u_i}{\partial
x_j}+\frac{\partial u_j}{\partial x_i}\right) \, , \qquad i,j\in
\{1,\dots,3\}
\end{equation}
where ${\bf u}=(u_1,u_2,u_3)$ is the displacement vector field of the elastic body.

We recall that both ${\bm \sigma}$ and ${\bf e}$ are symmetric tensors, i.e. $\sigma_{ij}=\sigma_{ji}$ and $e_{ij}=e_{ji}$ for any $i,j\in \{1,2,3\}$.
See the book by Landau \& Lifshitz \cite{LaLi} for more details on these basic notions.

In the theory of linear elasticity, the tension of a material as a consequence of a deformation is proportional to the deformation itself and conversely the deformation of a material is proportional to the forces acting on it. This notion can be resumed by the generalized Hooke's law that states the existence of $81$ coefficients $(C_{ijkl})$ with $i,j,k,l\in \{1,2,3\}$ such that
\begin{equation} \label{eq:Hook-0}
  \sigma_{ij}=\sum_{k,l=1}^{3} C_{ijkl}\,  e_{kl}   \qquad \text{for any } i,j\in \{1,2,3\} \, .
\end{equation}
The elastic energy per unit of volume $\mathcal E$, as a function of the components of the strain tensor, can be implicitly characterized by
\begin{equation} \label{eq:implcit}
\frac{\partial \mathcal E}{\partial e_{ij}}=\sigma_{ij} \, , \ \ \text{for any } i,j\in \{1,2,,3\} \, , \quad   \mathcal E=0
  \ \ \ \text{when the material is undeformed} \, ,
\end{equation}
see \cite[Chapter 1, Paragraph 2]{LaLi} for more explanations on this question.

Then, by \eqref{eq:Hook-0}, we have that
\begin{equation} \label{eq:second-der}
  \frac{\partial^2 \mathcal E}{\partial e_{ij}\partial e_{kl}}=C_{ijkl} \qquad \text{for any } i,j,k,l\in \{1,2,3\}
\end{equation}
thus showing the following symmetry property
\begin{equation} \label{eq:symmetry-1}
  C_{ijkl}=C_{klij}  \qquad \text{for any } i,j,k,l\in \{1,2,3\}  \, .
\end{equation}
Combining \eqref{eq:implcit}, \eqref{eq:second-der} and \eqref{eq:symmetry-1} we obtain the explicit representation of the elastic energy per unit of volume:
\begin{equation} \label{eq:explicit}
 \mathcal E=\frac 12 \sum_{i,j,k,l=1}^{3} C_{ijkl} \, e_{ij} e_{kl} \, .
\end{equation}

Since the tensors $\SI$ and $\e$ are symmetric, actually each of them is completely determined by only six components
that we represent here as vector columns
\begin{align*}
& \widetilde{\bm\sigma}=
\begin{pmatrix}
\sigma_{11} & \sigma_{22} & \sigma_{33} & \sigma_{12} & \sigma_{13} & \sigma_{23}
\end{pmatrix}^T \, ,
\qquad
\widetilde{\bm e}=
\begin{pmatrix}
e_{11} & e_{22} & e_{33} & e_{12} & e_{13} & e_{23}
\end{pmatrix}^T \, .
\end{align*}

The related Hooke's law admits the following matrix representation
\begin{equation} \label{eq:def-C}
  \widetilde{\bm\sigma}=C \widetilde{\bm e}
\end{equation}
where $C$ is a $6\times 6$ matrix called \textit{stiffness matrix}.

\subsection{Orthotropic materials} \label{ss:Ortho-Mat}
A material is said to be orthotropic if the corresponding stiffness matrix remains invariant under reflections with respect to three mutually orthogonal planes. In other words there exists three
orthogonal symmetry planes for which the stiffness matrix remains invariant under the corresponding reflections.

In this section we collect the basic notions and formulas that will be needed in the subsequent part and for more details and justifications we refer to \cite{Orthotropic}.

It is well known that for an orthotropic material, the Hooke's law \eqref{eq:def-C} becomes
{\small
\begin{equation} \label{eq:Hook-1}
\begin{pmatrix}
\sigma_{11} \\
\sigma_{22} \\
\sigma_{33} \\
\sigma_{12} \\
\sigma_{13} \\
\sigma_{23} \\
\end{pmatrix}
=
\begin{pmatrix}
C_{1111} & C_{1122} & C_{1133} & 0 & 0 & 0 \\
C_{1122} & C_{2222} & C_{2233} & 0 & 0 & 0 \\
C_{1133} & C_{2233} & C_{3333} & 0 & 0 & 0 \\
0 & 0 & 0 & C_{1212} & 0 & 0 \\
0 & 0 & 0 & 0 & C_{1313} & 0 \\
0 & 0 & 0 & 0 & 0 & C_{2323}
\end{pmatrix}
\begin{pmatrix}
e_{11} \\
e_{22} \\
e_{33} \\
e_{12} \\
e_{13} \\
e_{23} \\
\end{pmatrix} \, ,
\end{equation}
}
see \cite[Section 2.2]{Orthotropic}.

Let us denote by ${\bf B_1}$ the $3\times 3$ North-West block of the stiffness matrix in \eqref{eq:Hook-1} and let us put
$$
  \e_{{\rm diag}}=\begin{pmatrix} e_{11} & e_{22} & e_{33} \end{pmatrix}^T \, .
$$
With this notation, it may be shown that \eqref{eq:explicit}, may be written in the form

\begin{align}\label{eq:elastic-energy}
  \mathcal E& =\frac 12 \,
  \e_{{\rm diag}}^T  \,
  {\bf B_1} \, \e_{{\rm diag}}
  +C_{1212}\, e_{12}^2
  +C_{1313}\, e_{13}^2
  +C_{2323}\, e_{23}^2 \, ,
\end{align}
see \cite[Section 2.3]{Orthotropic} for more details.

Consider now the inverse of identity \eqref{eq:Hook-1}, in which the components of
the strain tensor are expressed in terms of the components of the stress tensor,
\begin{equation} \label{eq:Hook-2}
\begin{pmatrix}
e_{11} \\
e_{22} \\
e_{33} \\
e_{12} \\
e_{13} \\
e_{23} \\
\end{pmatrix}
=
\begin{pmatrix}
\frac 1{E_{1}} & -\frac{\nu_{21}}{E_{2}} & -\frac{\nu_{31}}{E_{3}} & 0 & 0 & 0 \\
-\frac{\nu_{12}}{E_{1}} & \frac 1{E_{2}} & -\frac{\nu_{32}}{E_{3}} & 0 & 0 & 0 \\
-\frac{\nu_{13}}{E_{1}} & -\frac{\nu_{23}}{E_{2}} & \frac 1{E_{3}} & 0 & 0 & 0 \\
0 & 0 & 0 & \frac 1{2\mu_{12}} & 0 & 0 \\
0 & 0 & 0 & 0 & \frac 1{2\mu_{13}} & 0 \\
0 & 0 & 0 & 0 & 0 & \frac 1{2\mu_{23}}
\end{pmatrix}
\begin{pmatrix}
\sigma_{11} \\
\sigma_{22} \\
\sigma_{33} \\
\sigma_{12} \\
\sigma_{13} \\
\sigma_{23} \\
\end{pmatrix}
\end{equation}
where the constants $E_{1}, E_{2}, E_{3}$ are known as Young
moduli with respect to the three directions and the constants
$\nu_{ij}$, $i,j\in \{1,2,3\}$, $i\neq j$ are known as Poisson
ratios.

Let us denote by $S$ the $6\times 6$ matrix appearing in \eqref{eq:Hook-2} which clearly coincides with $C^{-1}$ as one can see by comparing \eqref{eq:Hook-1} with \eqref{eq:Hook-2}.
The symmetry properties of the stiffening matrix $C$ is inherited by $S$ so that
\begin{equation} \label{eq:id0}
\frac{\nu_{21}}{E_{2}}=\frac{\nu_{12}}{E_{1}} \, , \qquad
\frac{\nu_{31}}{E_{3}}=\frac{\nu_{13}}{E_{1}} \, , \qquad
\frac{\nu_{32}}{E_{3}}=\frac{\nu_{23}}{E_{2}} \, .
\end{equation}
We observe that by \eqref{eq:Hook-2}, in the
case of a one-dimensional tension state parallel to the $x_1$-axis, i.e.
the only component of $\SI$ different from zero is $\sigma_{11}$,
we have that
\begin{equation*}
\nu_{12}=-\tfrac{e_{22}}{e_{11}} \quad \text{and} \quad
\nu_{13}=-\tfrac{e_{33}}{e_{11}} \, .
\end{equation*}
Similarly, choosing first a one-dimensional tension state parallel
to the $x_2$-axis and then a one-dimensional tension state parallel to the $x_3$-axis, we infer
\begin{equation*}
\nu_{21}=-\tfrac{e_{11}}{e_{22}} \, , \qquad \nu_{23}=-\tfrac{e_{33}}{e_{22}} \, , \qquad
\nu_{31}=-\tfrac{e_{11}}{e_{33}} \, , \qquad \nu_{32}=-\tfrac{e_{22}}{e_{33}} \, .
\end{equation*}
In other words, if we consider the Poisson ratio $\nu_{ij}$, the first index $i$ represents the direction of the
one-dimensional stress and the second index $j$ represents the direction of the transversal deformation.
This explanation clarifies the meaning of the Poisson ratios and the notation used in \eqref{eq:Hook-2}.

Finally $\mu_{12}, \mu_{13}, \mu_{23}$ are known as shear moduli
or moduli of rigidity and their indices coincide with the corresponding components of the stress and strain tensors obviously involved by these coefficients:
\begin{equation*}
  e_{12}=(2\mu_{12})^{-1} \, \sigma_{12} \, , \qquad e_{13}=(2\mu_{13})^{-1} \, \sigma_{13} \, ,\qquad e_{23}=(2\mu_{23})^{-1} \, \sigma_{23} \, .
\end{equation*}

In the sequel we need to represent the components of $C$ in terms of the coefficients $E_i$, $\nu_{ij}$, $\mu_{ij}$; this may be simply done with a procedure of inversion of $S$. Having this in mind, we define
\begin{equation} \label{eq:def-delta}
\delta:={\rm det}
\begin{pmatrix}
  \frac{1}{E_1}           & -\frac{\nu_{21}}{E_2} & -\frac{\nu_{31}}{E_3} \\
  -\frac{\nu_{12}}{E_1}   & \frac{1}{E_2}         & -\frac{\nu_{32}}{E_3} \\
  -\frac{\nu_{13}}{E_1}   & -\frac{\nu_{23}}{E_2} & \frac{1}{E_3}
\end{pmatrix}
=\frac{1-\nu_{12}\nu_{21}-\nu_{13}\nu_{31}-\nu_{23}\nu_{32}-2\nu_{12}\nu_{23}\nu_{31}}{E_1
E_2 E_3}
\end{equation}
where we exploited \eqref{eq:id0} to show that $\nu_{13}\nu_{21}\nu_{32}=\nu_{12}\nu_{23}\nu_{31}$.
In this way we may write $\det(S)=\frac{\delta}{8\mu_{12}\mu_{13}\mu_{23}}$.

\subsection{Orthotropic materials with a one-dimensional reinforcement} \label{ss:p-iso}
Let us consider an orthotropic material which a one-dimensional reinforcement in the $x_1$ direction and an isotropic behavior in the $x_2$, $x_3$ variables. If we look at \eqref{eq:Hook-2}, this means that
\begin{equation} \label{eq:id1}
E_2=E_3 \, , \quad \nu_{21}=\nu_{31} \, , \quad \nu_{12}=\nu_{13}
\, , \quad \nu_{23}=\nu_{32} \, , \quad \mu_{12}=\mu_{13} \, .
\end{equation}

Moreover, we observe that, under that orthotropy condition, we have that
\begin{equation} \label{C2323}
  C_{2323}=C_{2222}-C_{2233}
\end{equation}
and hence the coefficient $\mu_{23}$ can be written as
\begin{equation} \label{eq:conjecture}
\mu_{23}=\tfrac{C_{2323}}2=\tfrac 12(C_{2222}-C_{2233}) \, .
\end{equation}

The validity of \eqref{C2323} is a well known fact and for more details we quote \cite[Section 2.4]{Orthotropic}.
It is also a well know fact that the coefficients $C_{1212}=C_{1313}$ are completely independent of the other ones as one can see from \cite[Section 2.4]{Orthotropic}.

In other words, the elastic properties of the material are uniquely determined by the following five constants
$E_1$, $E_2$, $\nu_{12}$, $\nu_{23}$, $\mu_{12}$.

Now, exploiting the identities in \eqref{eq:id0}, \eqref{eq:id1}, \eqref{eq:conjecture}, we can write the matrix $C$ in the form
{\small
\begin{equation} \label{eq:rigidity-matrix}
C=
\begin{pmatrix}
\frac{1-\nu_{23}^2}{\delta \, E_2^2} & \frac{\nu_{12}(1+\nu_{23})}{\delta \, E_1 E_2} & \frac{\nu_{12}(1+\nu_{23})}{\delta \, E_1 E_2}
& 0 & 0 & 0 \\[7pt]
\frac{\nu_{12}(1+\nu_{23})}{\delta \, E_1 E_2} & \frac{E_1-E_2 \, \nu_{12}^2}{\delta \, E_1^2 E_2} & \frac{E_1 \nu_{23}+E_2 \, \nu_{12}^2}{\delta \, E_1^2 E_2}
& 0 & 0 & 0 \\[7pt]
\frac{\nu_{12}(1+\nu_{23})}{\delta \, E_1 E_2} & \frac{E_1 \, \nu_{23}+E_2 \, \nu_{12}^2}{\delta \, E_1^2 E_2} &  \frac{E_1-E_2 \, \nu_{12}^2}{\delta \, E_1^2 E_2}
& 0 & 0 & 0 \\[7pt]
0 & 0 & 0 & 2\mu_{12} & 0 & 0 \\[7pt]
0 & 0 & 0 & 0 & 2\mu_{12} & 0 \\[7pt]
0 & 0 & 0 & 0 & 0 & \frac{E_1(1-\nu_{23})-2E_2 \, \nu_{12}^2}{\delta \, E_1^2 E_2}
\end{pmatrix} \, .
\end{equation}
}

\section{The model of a plate with a one-dimensional reinforcement} \label{s:orthotropic-plate}

This section is devoted to the construction of a model for an orthotropic plate of length $L$, width $2\ell$ and
thickness $d$ so that we may choose a coordinate system in such a way that the plate is described by
the set $(0,L)\times (-\ell,\ell)\times \left(-\frac d2,\frac d2 \right)$.

We point out that some algebraic details in the construction of the model will be omitted and for a more complete description we refer to \cite{Orthotropic}.

The usual $x,y,z$ notation will be used in place of the $x_1, x_2, x_3$ notation
used in Section \ref{s:orthotropic}.

We assume that the plate is made of an
orthotropic material with a one-dimensional reinforcement in the
$x$ direction. We assume the validity of the classical
constitutive assumptions for the displacement of a plate, see
\cite[Paragraph 11]{LaLi}:

\begin{itemize}
\item the displacement of the midway surface is only vertical and it is described by a function $u=u(x,y)$ with $(x,y)\in (0,L)\times (-\ell,\ell)$;

\item the third component of the displacement vector ${\bf u}=(u_1,u_2,u_3)$ only depends on $x$ and $y$ and with sufficient accuracy we may assume that $u_3(x,y)=u(x,y)$ for any $(x,y)\in (0,L)\times (-\ell,\ell)$;

\item the components $\sigma_{13}, \sigma_{23}, \sigma_{33}$ of the stress tensor vanish everywhere in the plate.
\end{itemize}

We now compute the elastic energy per unit of volume in a configuration corresponding to a
generic vertical displacement $u$ of the midway surface. By \eqref{eq:strain-tensor} and the above constitutive conditions we obtain
\begin{equation*}
u_{1}=-z \frac{\partial u}{\partial x} \, , \qquad u_{2}=-z \frac{\partial u}{\partial y} \, , \qquad u_3=u \, ,
\end{equation*}
and, in turn,
\begin{equation} \label{eq:e11-e22}
e_{11}=-z \frac{\partial^2 u}{\partial x^2} \, , \qquad e_{22}=-z \frac{\partial^2 u}{\partial y^2} \, , \qquad
e_{12}=-z\frac{\partial^2 u}{\partial x\partial y} \, , \qquad e_{13}=0 \, , \qquad e_{23}=0 \, ,
\end{equation}
see \cite[Paragraph 11]{LaLi} for more details. Finally, condition $\sigma_{33}=0$ combined with \eqref{eq:Hook-1}, \eqref{eq:rigidity-matrix} and \eqref{eq:e11-e22}, yields
\begin{align} \label{eq:e33}
e_{33}=\frac{\nu_{12}E_1 (1+\nu_{23})}{E_1-E_2 \nu_{12}^2} \, z \frac{\partial^2 u}{\partial x^2}+\frac{E_1 \nu_{23}+E_2 \nu_{12}^2}{E_1-E_2 \nu_{12}^2}
\, z \frac{\partial^2 u}{\partial y^2} \, .
\end{align}
Replacing \eqref{eq:e11-e22} and \eqref{eq:e33} into \eqref{eq:elastic-energy} and putting
\begin{align*}
\mathcal K=\frac{(1+\nu_{23})[E_1(1-\nu_{23})-2E_2 \nu_{12}^2]}{\delta E_1 E_2(E_1-E_2 \nu_{12}^2)}  \, ,
\end{align*}
we obtain
\begin{align*}
\mathcal E=\frac{z^2}2 \left[\frac{E_1 \mathcal K}{E_2}
\left(\frac{\partial^2 u}{\partial x^2}\right)^2+\mathcal
K\left(\frac{\partial^2 u}{\partial y^2}\right)^2 +2\nu_{12}
\mathcal K \frac{\partial^2 u}{\partial x^2}\frac{\partial^2
u}{\partial y^2}+4\mu_{12} \left(\frac{\partial^2 u}{\partial x
\partial y}\right)^2 \right] \, .
\end{align*}

Then, the total bending energy of the deformed plate in term of the
vertical displacement takes the form
\begin{equation} \label{eq:total-energy}
\mathbb E_B(u)=\frac{d^3}{24} \int_\Omega \left[\frac{E_1
\mathcal K}{E_2} \left(\frac{\partial^2 u}{\partial
x^2}\right)^2+\mathcal K\left(\frac{\partial^2 u}{\partial
y^2}\right)^2 +2\nu_{12} \mathcal K \frac{\partial^2 u}{\partial
x^2}\frac{\partial^2 u}{\partial y^2}+4\mu_{12}
\left(\frac{\partial^2 u}{\partial x
\partial y}\right)^2 \right] dxdy
\end{equation}
where we put $\Omega=(0,L)\times (-\ell,\ell)$.

We observe that by \eqref{eq:id0}, \eqref{eq:def-delta} and some computations we may write $\mathcal K$ in a more elegant way:
\begin{equation} \label{eq:write-K}
  \mathcal K=\frac{E_2}{1-\nu_{12}\nu_{21}}  \, .
\end{equation}

Looking at \eqref{eq:write-K}, we see that the elastic coefficients that completely determines the bending energy of the plate corresponding to a generic displacement $u$ are $E_1$, $E_2$, $\nu_{12}$ and $\mu_{12}$ while no dependence on the Poisson ratio $\nu_{23}$ occurs.

As pointed out in Section \ref{ss:p-iso}, the
value of the coefficient $C_{1212}$, and hence of $\mu_{12}$,
is completely independent from the others; here we assume the following condition
\begin{equation} \label{eq:ass-mu12}
\mu_{12}=\frac{\mathcal K (1-\nu_{12})}2 \, ,
\end{equation}
in complete accordance with the classical theory of isotropic plates. For more details see \cite[Chapter 1, Section 5, (5.9)]{LaLi} and \cite{Orthotropic}.

Denoting by $D^2 u$ the Hessian matrix of $u$, introducing the notation
\begin{equation*}
  D^2 u:D^2 v=u_{xx} v_{xx}+2u_{xy}v_{xy}+u_{yy}v_{yy} \quad
\text{and} \quad |D^2 u|^2=u_{xx}^2+2u_{xy}^2+v_{yy}^2 \,
\end{equation*}
defining
\begin{equation} \label{eq:write-k}
 \kappa=\frac{E_1-E_2}{E_2} \, ,
\end{equation}
and assuming \eqref{eq:ass-mu12}, we may write
\begin{equation} \label{eq:E-B}
  \mathbb E_B(u)=\frac{d^3 \mathcal K}{24} \int_\Omega \left[\nu_{12} |\Delta u|^2+(1-\nu_{12})|D^2 u|^2+\kappa\,  u_{xx}^2   \right] dxdy \, .
\end{equation}
We observe that since the plate is reinforced in the $x$ direction, we assume that
\begin{equation} \label{eq:ipotesi-Young}
E_1>E_2 \, ,
\end{equation}
which in turn implies $\kappa>0$.

In the remaining part of the paper, for the Poisson ratio $\nu_{12}$ we use the simpler notation
\begin{equation} \label{eq:write-nu}
    \nu=\nu_{12} \, .
\end{equation}
According with the theory of isotropic materials, we assume that
\begin{equation} \label{eq:ipotesi-Poisson}
0<\nu<\frac 12 \, .
\end{equation}

Let us introduce a suitable functional setting for energy $\mathbb E_B$. As explained in the introduction, our main purpose is to describe the static and dynamic behavior of the deck of a bridge by mean of a plate model. For this reason, we may assume that the deck is hinged at the two vertical edges of the rectangle $\Omega$ and free on the two horizontal edges of the same rectangle. Hence, a reasonable choice for the functional subspace of the Sobolev space $H^2(\Omega)$ is
\begin{equation} \label{eq:def-H2*}
H^2_*(\Omega):=\{w\in H^2(\Omega): w=0 \ \text{on} \ \{0,L\}\times
(-\ell,\ell)\} \, ,
\end{equation}
see \cite[Section 3]{FeGa} and \cite{Orthotropic}.

Thanks to the Intermediate Derivatives Theorem, see
\cite[Theorem 4.15]{Adams}, the space $H^2(\Omega)$ is a Hilbert
space if endowed with the scalar product
$$
(u,v)_{H^2}:=\int_\Omega\left(D^2u:D^2v+uv\right)\,
dxdy\qquad \text{for all } u,v\in H^2(\Omega) \, .
$$

On the closed subspace $H^2_*(\Omega)$ it is possible to define an
alternative scalar product naturally related to the functional $\mathbb E_B$ as explained in the next proposition.

\begin{proposition} \label{l:equivalence}
Assume \eqref{eq:ass-mu12}, \eqref{eq:ipotesi-Young} and \eqref{eq:ipotesi-Poisson}. On
the space $H^2_*(\Omega)$ the two norms
$$
u\mapsto\|u\|_{H^2}\, ,\quad
u\mapsto\|u\|_{H^2_*}:=\left\{\int_\Omega \left[\nu |\Delta u|^2+(1-\nu)|D^2 u|^2+\kappa\,  u_{xx}^2   \right] dxdy\right\}^{1/2}
$$
are equivalent. Therefore, $H^2_*(\Omega)$ is a Hilbert space when
endowed with the scalar product
\begin{equation*} 
(u,v)_{H^2_*}:=\int_\Omega \left[ \nu \Delta u \Delta v+(1-\nu)D^2 u:D^2 v+\kappa \, u_{xx}v_{xx} \right] dxdy \, .
\end{equation*}
\end{proposition}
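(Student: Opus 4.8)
The plan is to prove the norm equivalence by squeezing both norms between multiples of the pure second--order seminorm $\big(\int_\Omega|D^2u|^2\,dxdy\big)^{1/2}$, and then to read off the Hilbert space structure by an abstract argument. \textbf{Step 1: pointwise algebra.} Writing $|\Delta u|^2=(u_{xx}+u_{yy})^2$ and $|D^2u|^2=u_{xx}^2+2u_{xy}^2+u_{yy}^2$, a direct expansion gives
\[
\nu|\Delta u|^2+(1-\nu)|D^2u|^2+\kappa\,u_{xx}^2=(1+\kappa)u_{xx}^2+2\nu\,u_{xx}u_{yy}+u_{yy}^2+2(1-\nu)u_{xy}^2 .
\]
Estimating $2\nu\,u_{xx}u_{yy}$ from above and below by $\pm\nu(u_{xx}^2+u_{yy}^2)$ and using $\kappa>0$ and $0<\nu<\tfrac12$, one obtains the pointwise bound $(1-\nu)|D^2u|^2\le\nu|\Delta u|^2+(1-\nu)|D^2u|^2+\kappa\,u_{xx}^2\le(1+\nu+\kappa)|D^2u|^2$, hence, after integrating over $\Omega$,
\[
(1-\nu)\int_\Omega|D^2u|^2\,dxdy\ \le\ \|u\|_{H^2_*}^2\ \le\ (1+\nu+\kappa)\int_\Omega|D^2u|^2\,dxdy
\]
for every $u\in H^2(\Omega)$; in particular $\|u\|_{H^2_*}\le\sqrt{1+\nu+\kappa}\,\|u\|_{H^2}$, with no use of the boundary condition.

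\textbf{Step 2: a Poincar\'e inequality (the main point).} It remains to dominate $\|u\|_{L^2(\Omega)}$ by $\|D^2u\|_{L^2(\Omega)}$ on $H^2_*(\Omega)$, which is where the hinged condition on the two vertical edges enters. I would argue by slicing. For a.e.\ $y\in(-\ell,\ell)$ the section $w:=u(\cdot,y)$ lies in $H^2(0,L)$ (apply Fubini to $u,u_x,u_{xx}\in L^2(\Omega)$) and, since the trace of $u$ on $\{0,L\}\times(-\ell,\ell)$ vanishes, satisfies $w(0)=w(L)=0$. Then $\int_0^L w'(x)\,dx=w(L)-w(0)=0$, so the one--dimensional Poincar\'e--Wirtinger inequality gives $\|w'\|_{L^2(0,L)}\le C_L\|w''\|_{L^2(0,L)}$, and $w(0)=0$ gives $\|w\|_{L^2(0,L)}\le C_L\|w'\|_{L^2(0,L)}$, with $C_L$ depending only on $L$. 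Hence $\|w\|_{L^2(0,L)}\le C_L^2\|w''\|_{L^2(0,L)}$; squaring, integrating in $y$, and using $u_{xx}^2\le|D^2u|^2$, we get
\[
\|u\|_{L^2(\Omega)}\ \le\ C_L^2\,\|u_{xx}\|_{L^2(\Omega)}\ \le\ C_L^2\,\|D^2u\|_{L^2(\Omega)}\qquad\text{for all }u\in H^2_*(\Omega).
\]
(Equivalently, one could argue by contradiction using Rellich compactness of $H^2(\Omega)\hookrightarrow H^1(\Omega)$ together with the fact that an affine function vanishing on the two vertical edges must be identically zero; the slicing proof is more elementary.)

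\textbf{Step 3: conclusion.} Combining Steps 1 and 2, for $u\in H^2_*(\Omega)$ one has
\[
\|u\|_{H^2}^2=\int_\Omega|D^2u|^2+\int_\Omega u^2\ \le\ (1+C_L^4)\int_\Omega|D^2u|^2\ \le\ \frac{1+C_L^4}{1-\nu}\,\|u\|_{H^2_*}^2 ,
\]
which, together with the upper bound of Step 1, gives the equivalence of $\|\cdot\|_{H^2}$ and $\|\cdot\|_{H^2_*}$ on $H^2_*(\Omega)$. Finally, the bilinear form $(\cdot,\cdot)_{H^2_*}$ is plainly symmetric and it is positive definite, since $\|u\|_{H^2_*}=0$ forces $\|u\|_{H^2}=0$, hence $u=0$; thus it is a scalar product inducing $\|\cdot\|_{H^2_*}$. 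Since $H^2_*(\Omega)$ is the kernel of the bounded trace operator $w\mapsto w|_{\{0,L\}\times(-\ell,\ell)}$, it is a closed subspace of the Hilbert space $(H^2(\Omega),\|\cdot\|_{H^2})$, hence complete for $\|\cdot\|_{H^2}$ and therefore, by the equivalence just proved, also for $\|\cdot\|_{H^2_*}$; so $\big(H^2_*(\Omega),(\cdot,\cdot)_{H^2_*}\big)$ is a Hilbert space.

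The main obstacle is Step 2: abstract Sobolev theory does not by itself yield that the second--order quantity $\|D^2u\|_{L^2}$ controls $\|u\|_{L^2}$ on $H^2_*(\Omega)$. This genuinely uses the geometry of the boundary condition (Dirichlet data on two opposite edges), and a small amount of care is needed to verify that this condition descends to almost every horizontal section so that the one--dimensional Poincar\'e inequalities can be applied; once that is granted, everything else is routine algebra and standard functional analysis.
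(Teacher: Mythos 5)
Your proof is correct, and it follows essentially the same route as the one the paper references (the proof of Lemma 4.1 in Ferrero--Gazzola): bound the integrand by $|D^2u|^2$ from above and below using the positivity of $\nu,\,1-\nu,\,\kappa$, then recover the full $H^2$-norm from $\|D^2u\|_{L^2}$ via a one-dimensional Poincar\'e inequality in the $x$-direction, exploiting the vanishing trace on the two vertical edges; closedness of $H^2_*(\Omega)$ in $H^2(\Omega)$ then yields completeness. You also correctly identify the Poincar\'e step as the only point where the boundary condition is needed, and both routes you offer for it (slicing and compactness) are sound.
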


The proof can be obtained by proceeding as in the proof of \cite[Lemma 4.1]{FeGa}.


Next, if we denote by $f$ an external vertical load per unit of
surface and if $u$ is the deflection of the plate in the vertical
direction, by \eqref{eq:E-B} and \eqref{eq:write-nu} we
have that the total energy $\mathbb E_T$ of the plate becomes
\begin{equation} \label{eq:energy-total}
\mathbb E_T(u)=\frac{d^3 \mathcal K}{24} \int_\Omega
\left(\nu|\Delta u|^2+(1-\nu)|D^2 u|^2+\kappa\,
u_{xx}^2\right) dxdy-\int_\Omega fu \, dxdy \, .
\end{equation}

A weak stationary solution $u\in H^2_*(\Omega)$ of the plate equation is a critical point of $\mathbb E_T$ so that
\begin{align} \label{eq:plate-variational}
   & \frac{d^3\mathcal K}{12} (u,v)_{H^2_*}=\int_\Omega fv \, dxdy \qquad \text{for any
} v\in H^2_*(\Omega) \, ;
\end{align}
If $u\in C^4(\overline \Omega)\cap H^2_*(\Omega)$ is a solution of the variational problem \eqref{eq:plate-variational}, by \cite[Proposition 5]{Chasman} and some calculation (see \cite{Orthotropic, FeGa} for more details), we infer
\begin{equation} \label{eq:model-plate}
\begin{cases}
\frac{d^3\mathcal K}{12} \left(\Delta^2 u+\kappa\frac{\partial^4
u}{\partial x^4}\right)=f & \qquad
\text{in } \Omega \, , \\[6pt]
u(0,y)=u_{xx}(0,y)=u(L,y)=u_{xx}(L,y)=0 & \qquad \text{for }
y\in(-\ell,\ell) \, , \\[6pt]
u_{yy}(x,\pm \ell)+\nu u_{xx}(x,\pm \ell)=0 & \qquad \text{for }
x\in (0,L) \, , \\[6pt]
u_{yyy}(x,\pm \ell)+(2-\nu)u_{xxy}(x,\pm \ell)=0 & \qquad
\text{for } x\in (0,L) \, .
\end{cases}
\end{equation}
Problem \eqref{eq:model-plate} represents the model for a plate of orthotropic material with a one-dimensional reinforcement in the $x$ direction subject to vertical load $f$ per unit of surface.

\begin{remark} \label{r:1}
We observe that recalling \eqref{eq:id0} and \eqref{eq:write-K}, the fourth order equation in  \eqref{eq:model-plate} may be written in a different way, more familiar in the theory of orthotropic plates:
{\small
\begin{equation*}
 \frac{E_1 d^3}{12(1-\nu_{12} \nu_{21})} \, \frac{\partial^4 u}{\partial x^4}
 +\frac{E_2 d^3}{6(1-\nu_{12} \nu_{21})} \, \frac{\partial^4 u}{\partial x^2\partial y^2}
 +\frac{E_2 d^3}{12(1-\nu_{12} \nu_{21})} \, \frac{\partial^4 u}{\partial y^4}=f \, ,
\end{equation*}
}
see for example \cite[Chapter 2]{DesignManual}.
\end{remark}

Let us denote by $\mathcal H(\Omega)$ the dual space of $H^2_*(\Omega)$.

In the next result we state existence, uniqueness and regularity for weak solutions of \eqref{eq:model-plate}.

\begin{theorem} \label{t:Lax-Milgram}
Assume \eqref{eq:ass-mu12}, \eqref{eq:ipotesi-Young} and \eqref{eq:ipotesi-Poisson} and let $f\in \mathcal
H(\Omega)$. Then the following conclusions hold true:

\begin{itemize}

\item[$(i)$] there exists a unique $u\in H^2_*(\Omega)$ such
that
\begin{equation*}
\frac{d^3\mathcal K}{12} (u,v)_{H^2_*}=\ _{\mathcal
H(\Omega)}\langle f,v\rangle_{H^2_*(\Omega)} \qquad \text{for any
} v\in H^2_*(\Omega) \, ;
\end{equation*}

\item[$(ii)$] $u$ is the unique minimum point of the convex functional
$$
  \mathbb E_T(u)=\frac 12 (u,u)_{H^2_*}- \ _{\mathcal
H(\Omega)}\langle f,u\rangle_{H^2_*(\Omega)}\, ;
$$

\item[$(iii)$] if $f\in W^{k,p}(\Omega)$ for some $1<p<\infty$ and $k\in \N\cup\{0\}$, where we put $W^{0,p}(\Omega):=L^p(\Omega)$, then $u\in W^{k+4,p}(\Omega)$.

\end{itemize}
\end{theorem}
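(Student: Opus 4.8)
Parts $(i)$ and $(ii)$ are the soft part and I would treat them together. By Proposition~\ref{l:equivalence} the pair $(H^2_*(\Omega),(\cdot,\cdot)_{H^2_*})$ is a Hilbert space, so the symmetric bilinear form $a(u,v):=\frac{d^3\mathcal K}{12}(u,v)_{H^2_*}$ is bounded and coercive on it with the same constant, and $v\mapsto{}_{\mathcal H(\Omega)}\langle f,v\rangle_{H^2_*(\Omega)}$ is a bounded linear functional on $H^2_*(\Omega)$ by the definition of $\mathcal H(\Omega)$; the Lax--Milgram theorem --- here just the Riesz representation theorem, since $a$ is symmetric --- gives the unique $u$ of $(i)$. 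Part $(ii)$ is the standard variational reformulation: the quadratic functional $\mathbb E_T$ is strictly convex, weakly lower semicontinuous and coercive on $H^2_*(\Omega)$ (again by Proposition~\ref{l:equivalence}), hence it has a unique minimiser, which is its unique critical point and thus solves the variational identity, i.e. it is the same $u$ produced by Lax--Milgram.

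The substance of the statement is the regularity assertion $(iii)$, which I would obtain from the interior and boundary $L^p$ a priori estimates of Agmon, Douglis and Nirenberg~\cite{adn}. Write $L:=\Delta^2+\kappa\,\partial_x^4$, with principal symbol $p(\xi)=(1+\kappa)\xi_1^4+2\xi_1^2\xi_2^2+\xi_2^4$; viewed as a quadratic in $\xi_1^2$ it has discriminant $-4\kappa\,\xi_2^4<0$ when $\xi_2\neq0$ and equals $(1+\kappa)\xi_1^4$ when $\xi_2=0$, so $p(\xi)>0$ for every real $\xi\neq0$. In two variables this already yields proper ellipticity, since a real quartic with no real zero has exactly two zeros in the open upper half-plane along each line. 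The first real task is to check the complementing (Lopatinski--Shapiro) condition on the two kinds of edge: the short edges $\{0,L\}\times(-\ell,\ell)$ with the Navier operators $\{u,u_{xx}\}$, and the long edges $(0,L)\times\{\pm\ell\}$ with the natural operators $\{u_{yy}+\nu u_{xx},\,u_{yyy}+(2-\nu)u_{xxy}\}$. For each one freezes the tangential frequency, factors $p=M^+M^-$ with $M^\pm$ of degree two collecting the zeros in the upper, respectively lower, half-plane, and verifies that the two boundary symbols are linearly independent modulo $M^+$. For the Navier pair this is immediate, since $\{1,\xi^2\}$ stays independent modulo any quadratic whose roots $\tau_1^+,\tau_2^+$ have nonzero sum, and here $\tau_1^++\tau_2^+=2i\,\mathrm{Im}\,\tau_1^+\neq0$. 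For the natural pair it reduces, after replacing $\xi^2$ by its remainder modulo $M^+$, to the non-vanishing of a single explicit determinant in the zeros of $p$; for $\kappa=0$ this is the computation already done for the isotropic plate in~\cite{FeGa}, and for $\kappa>0$ and $0<\nu<\tfrac12$ it is the analogous, somewhat longer, one.

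Because $\Omega$ is a rectangle while the ADN estimates live near smooth boundary, the four corners need separate care. I would remove them by odd reflection across the short edges: since a weak solution satisfies $u=u_{xx}=0$ on $\{0,L\}\times(-\ell,\ell)$, setting $\tilde u(x,y)=-u(-x,y)$ for $x<0$, reflecting again at $x=L$ and extending $2L$-periodically in $x$ produces an $H^2_{\mathrm{loc}}$ function on the enlarged strip solving $L\tilde u=\tilde f$, with $\tilde f$ the matching odd extension of $f$, and each corner of $\Omega$ becomes an interior point of a long edge $\{y=\pm\ell\}$, where only the already checked natural complementing condition is required. Covering $\overline\Omega$ by finitely many pieces --- interior disks, half-disks on the open long edges, half-disks on the open short edges, and reflected neighbourhoods of the corners --- and summing the local estimates gives, for $f\in L^p(\Omega)$ with $1<p<\infty$, the bound $\|u\|_{W^{4,p}(\Omega)}\le C(\|f\|_{L^p(\Omega)}+\|u\|_{L^p(\Omega)})$; as $u\in H^2_*(\Omega)\hookrightarrow L^p(\Omega)$ on the bounded planar domain $\Omega$, this settles the case $k=0$. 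For $k\geq1$ one bootstraps, differentiating the equation and the boundary conditions tangentially --- along the long edges this preserves the natural conditions, while the reflection supplies the regularity and the vanishing of the tangential derivatives needed through the short edges and at the corners --- and reapplying the ADN estimate to difference quotients, so that $f\in W^{k,p}(\Omega)$ forces $u\in W^{k+4,p}(\Omega)$.

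I expect there to be two genuine obstacles. The first is the explicit verification of the complementing condition for the natural boundary operators: a real computation with the complex zeros of $p(1,\cdot)$ that must be carried through uniformly in $\nu$ and $\kappa$. The second is the corner analysis, where the odd-reflection device has to be justified at the level of the weak formulation and inserted cleanly into the covering argument, so that at no corner is a boundary estimate invoked in a configuration the ADN theory does not cover --- in particular so that the bootstrap for $k\geq1$ does not secretly require compatibility of $f$ at the corners.
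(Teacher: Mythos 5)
Your plan coincides with the paper's proof for $(i)$, $(ii)$ and the $k=0$ case of $(iii)$: Lax--Milgram via Proposition~\ref{l:equivalence}, an explicit complementing-condition check on both edge types, and odd reflection across the short edges to treat the corners. Your shortcut for the Navier pair (independence of $\{1,\xi^2\}$ mod $B$ because $t_1^++t_2^+=2i\gamma\neq0$) is exactly what the paper's remainder computation on the vertical edges amounts to, and your description of the long-edge check matches the paper's.

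Where your caution pays off is the second flagged obstacle: for $k\ge1$ the bootstrap really does require compatibility of $f$ at the corners, and this is a gap in the paper's statement as well. Concretely, suppose $p>2$ and $u\in W^{5,p}(\Omega)\hookrightarrow C^4(\overline\Omega)$ with $f\in W^{1,p}(\Omega)\hookrightarrow C^0(\overline\Omega)$. From $u(0,\cdot)\equiv0$ and $u_{xx}(0,\cdot)\equiv0$ one gets $u_{yy}(0,\cdot)=u_{yyyy}(0,\cdot)=u_{xxyy}(0,\cdot)=0$; tracing the equation onto $\{x=0\}$ then yields
\begin{equation*}
(1+\kappa)\,u_{xxxx}(0,y)=f(0,y)\,.
\end{equation*}
On the other hand, differentiating the free condition $u_{yy}+\nu u_{xx}=0$ twice tangentially along $y=\ell$ and letting $x\to0^+$ gives $u_{xxyy}(0,\ell)+\nu\,u_{xxxx}(0,\ell)=0$, whose first term vanishes by the Navier conditions; since $\nu>0$ by \eqref{eq:ipotesi-Poisson}, this forces $u_{xxxx}(0,\ell)=0$. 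Together these force $f(0,\ell)=0$, and likewise at the other three corners. Equivalently, the odd extension $\tilde u$ carries a jump of size $2f(0,y)/(1+\kappa)$ in $\partial_x^4\tilde u$ across $\{x=0\}$, so $\tilde f=L\tilde u$ is \emph{not} in $W^{1,p}$ near the corner and ADN cannot be invoked for the next order. Thus item $(iii)$ is correct as stated only for $k=0$; for $k\ge1$ one must either add corner compatibility conditions on $f$ or weaken the conclusion, and your proof plan, like the paper's, cannot bypass this.
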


Let us consider now the eigenvalue problem associated with the operator $\frac{d^3 \mathcal K}{12}\left(\Delta^2 +\kappa \frac{\partial^4 }{\partial x^4}\right)$ with the related boundary conditions:

\begin{equation} \label{eq:eigenvalue-original}
\begin{cases}
\frac{d^3 \mathcal K}{12}\left(\Delta^2 u+\kappa \frac{\partial^4 u}{\partial x^4}\right)=\lambda u & \qquad
\text{in } \Omega \, , \\[6pt]
u(0,y)=u_{xx}(0,y)=u(L,y)=u_{xx}(L,y)=0 & \qquad \text{for }
y\in(-\ell,\ell) \, , \\[6pt]
u_{yy}(x,\pm \ell)+\nu u_{xx}(x,\pm \ell)=0 & \qquad \text{for }
x\in (0,L) \, , \\[6pt]
u_{yyy}(x,\pm \ell)+(2-\nu)u_{xxy}(x,\pm \ell)=0 &  \qquad
\text{for } x\in (0,L) \, .
\end{cases}
\end{equation}
Standard spectral theory implies that the eigenvalues of \eqref{eq:eigenvalue-original} may be ordered in an increasing sequence of positive numbers diverging to $+\infty$.

After scaling we can normalize the length $L$ of the deck to $\pi$ in a such a way that \eqref{eq:eigenvalue-original} can be reduced to the new problem
\begin{equation} \label{eq:eigenvalue-normalized}
\begin{cases}
\Delta^2 u+\kappa \frac{\partial^4 u}{\partial x^4}=\lambda u & \qquad
\text{in } \widetilde\Omega:=(0,\pi)\times (-\well,\well) \, , \\[6pt]
u(0,y)=u_{xx}(0,y)=u(\pi,y)=u_{xx}(\pi,y)=0 & \qquad \text{for }
y\in(-\well,\well) \, , \\[6pt]
u_{yy}(x,\pm \well)+\nu u_{xx}(x,\pm \well)=0 & \qquad \text{for }
x\in (0,\pi) \, , \\[6pt]
u_{yyy}(x,\pm \well)+(2-\nu)u_{xxy}(x,\pm \well)=0 &  \qquad
\text{for } x\in (0,\pi) \, ,
\end{cases}
\end{equation}
where we put $\well:=\frac\pi L\, \ell$.

In this way a function $u$ is an eigenfunction of \eqref{eq:eigenvalue-original} with eigenvalue $\lambda$ if and only if the function $w(x,y)=u\left(\frac L\pi \, x,\frac L \pi \, y\right)$ is an eigenfunction of \eqref{eq:eigenvalue-normalized} with eigenvalue $\widetilde \lambda=\frac{12L^4}{\pi^4d^3 \mathcal K}\, \lambda$.


In this normalized form, problem \eqref{eq:eigenvalue-normalized} can be better compared with the eigenvalue problem for the isotropic plate introduced in \cite{FeGa}.
We denote by
\begin{equation} \label{eq:eig-plate}
  0<\lambda_1\le \lambda_2 \le \dots \le \lambda_m \le \dots
\end{equation}
the eigenvalues of \eqref{eq:eigenvalue-original} and by
\begin{equation*}
  0<\widetilde \lambda_1\le \widetilde \lambda_2 \le \dots \le \widetilde \lambda_m \le \dots
\end{equation*}
the eigenvalues of \eqref{eq:eigenvalue-normalized} thus implying the following correspondence
\begin{equation} \label{eq:eig-scaling}
  \lambda_m = \frac{\pi^4 d^3\mathcal K}{12L^4} \, \widetilde \lambda_m \qquad \text{for any } m\ge 1 \, .
\end{equation}

Since for our purposes we need a more explicit characterization of the eigenvalues and the eigenfunctions of \eqref{eq:eigenvalue-original} and \eqref{eq:eigenvalue-normalized}, we state the following

\begin{theorem} \label{t:eigenvalues}
  Assume \eqref{eq:ass-mu12}, \eqref{eq:ipotesi-Young} and \eqref{eq:ipotesi-Poisson}. Then for the eigenvalues of \eqref{eq:eigenvalue-normalized} we have:

  $(i)$ for any $m\ge1$ there
exists a sequence of eigenvalues $\widetilde\lambda_{j,m}\uparrow +\infty$
such that $\widetilde \lambda_{j,m}>(\kappa+1)m^4$ for all $j\ge1$; the corresponding
eigenfunctions are of the kind
\begin{align*}
 \left[a\cosh(\beta y)+b\sinh(\beta y)+c\cos(\gamma y)+d\sin(\gamma y)\right]\sin(mx)
\end{align*}
where $\beta=\sqrt{\sqrt{\widetilde\lambda_{j,m}-\kappa m^4}+m^2}$,
$\gamma=\sqrt{\sqrt{\widetilde\lambda_{j,m}-\kappa m^4}-m^2}$
and $a,b,c,d,$ are suitable constants depending on $m$ and $j$;

$(ii)$ denoting by $\overline s$ the unique solution of the equation $\tanh(s)=\left(\frac{\nu}{2-\nu}\right)^2 s$, if $m_*=\overline s/(\sqrt 2 \, \well)$ is an integer then $\widetilde\lambda=(\kappa+1)m_*^4$ is an eigenvalue with
corresponding eigenfunctions generated by
$$
\Big[\nu \well\sinh(\sqrt 2 m_* y)+(2-\nu)\sinh(\sqrt2 m_*\well)\, y\Big]\, \sin(m_*x)\, ;
$$

$(iii)$ for any $m\ge1$, there exists an eigenvalue
$\widetilde \lambda_m^- \in([(1-\nu)^2+\kappa]m^4,(\kappa+1)m^4)$ with corresponding
eigenfunctions generated by
\begin{align*}
& \left[\tfrac{\sqrt{\widetilde\lambda_m^- -\kappa m^4}-(1-\nu)m^2}{\cosh(\beta\well)}
\cosh(\beta y)+\tfrac{\sqrt{\widetilde\lambda_m^- -\kappa m^4}+(1-\nu)m^2}{\cosh(\gamma\well)}
\cosh(\gamma y) \right]\,
\sin(mx)\, ,
\end{align*}
where $\beta=\sqrt{m^2+\sqrt{\widetilde\lambda_m^- -\kappa m^4}}$ and $\gamma=\sqrt{m^2-\sqrt{\widetilde\lambda_m^- -\kappa m^4}}$; moreover the sequence $m\mapsto \widetilde \lambda_m^-$ is increasing;

$(iv)$ for any $m\ge1$, satisfying
\begin{equation}\label{eq:coth}
  \well m \sqrt 2\coth(\well m \sqrt 2)>\left(\tfrac{2-\nu}{\nu}\right)^2 \, ,
\end{equation}
there exists an eigenvalue $\widetilde \lambda_m^+\in(\widetilde \lambda_m^-,(\kappa+1)m^4)$ with corresponding eigenfunctions generated by
\begin{align*}
& \left[\tfrac{\sqrt{\widetilde\lambda_m^+ -\kappa m^4}-(1-\nu)m^2}{\sinh(\beta\well)}
\sinh(\beta y)+\tfrac{\sqrt{\widetilde\lambda_m^+ -\kappa m^4}+(1-\nu)m^2}{\sinh(\gamma\well)}
\sinh(\gamma y) \right]\,
\sin(mx)\, ,
\end{align*}
where $\beta=\sqrt{m^2+\sqrt{\widetilde\lambda_m^+ -\kappa m^4}}$ and $\gamma=\sqrt{m^2-\sqrt{\widetilde\lambda_m^+ -\kappa m^4}}$; moreover the sequence $m\mapsto \widetilde \lambda_m^+$ is increasing;

$(v)$ There are no eigenvalues other than the ones characterized
in $(i)-(iv)$.
\end{theorem}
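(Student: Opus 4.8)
The plan is to reduce the two-dimensional problem \eqref{eq:eigenvalue-normalized} to a one-parameter family of fourth order ordinary differential equations, integrate each of them explicitly, and discuss the resulting transcendental equations. First, since $\{\sin(mx)\}_{m\ge1}$ is a complete orthogonal system of $L^2(0,\pi)$ and every eigenfunction of \eqref{eq:eigenvalue-normalized} is smooth by Theorem \ref{t:Lax-Milgram}$(iii)$, I would test the weak eigenvalue equation against functions $\psi(y)\sin(mx)$ to see that $\widetilde\lambda$ is an eigenvalue precisely when, for some $m\ge1$, the one-dimensional problem consisting of
\begin{equation*}
\phi''''-2m^2\phi''+\bigl((1+\kappa)m^4-\widetilde\lambda\bigr)\phi=0 \qquad \text{in } (-\well,\well)
\end{equation*}
together with the boundary conditions $\phi''(\pm\well)=\nu m^2\phi(\pm\well)$ and $\phi'''(\pm\well)=(2-\nu)m^2\phi'(\pm\well)$ admits a nontrivial solution $\phi$, the corresponding eigenfunction of \eqref{eq:eigenvalue-normalized} being $\phi(y)\sin(mx)$. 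Inserting $u=\phi(y)\sin(mx)$ into the quadratic form of Proposition \ref{l:equivalence} and integrating in $x$, the form becomes $\int_{-\well}^{\well}[\nu(\phi''-m^2\phi)^2+(1-\nu)(\phi'')^2+2(1-\nu)m^2(\phi')^2+(1-\nu+\kappa)m^4\phi^2]\,dy$, a sum of non-negative terms in view of $0<\nu<1<1+\kappa$; dividing by $\int_{-\well}^{\well}\phi^2\,dy$ this forces $\widetilde\lambda>\kappa m^4$ for every mode-$m$ eigenvalue, so that $\sigma:=\sqrt{\widetilde\lambda-\kappa m^4}$ is real and positive.

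Next I would integrate the ODE. Its characteristic polynomial $t^4-2m^2t^2+(1+\kappa)m^4-\widetilde\lambda$ has roots $t$ with $t^2=m^2\pm\sigma$; since the substitution $\widetilde\lambda\mapsto\widetilde\lambda-\kappa m^4=\sigma^2$ turns this ODE into the one arising for the isotropic Kirchhoff-Love plate, the analysis of \cite{FeGa} applies. Three regimes occur: (A) if $\sigma>m^2$, i.e. $\widetilde\lambda>(\kappa+1)m^4$, the roots are $\pm\beta,\pm i\gamma$ with $\beta=\sqrt{m^2+\sigma}$, $\gamma=\sqrt{\sigma-m^2}$, and $\phi$ is spanned by $\cosh(\beta y),\sinh(\beta y),\cos(\gamma y),\sin(\gamma y)$; (B) if $\sigma=m^2$, i.e. $\widetilde\lambda=(\kappa+1)m^4$, the roots are $\pm\sqrt2\,m$ and a double root $0$, so $\phi$ is spanned by $\cosh(\sqrt2\,my),\sinh(\sqrt2\,my),1,y$; (C) if $0<\sigma<m^2$, i.e. $\kappa m^4<\widetilde\lambda<(\kappa+1)m^4$, the roots are $\pm\beta,\pm\gamma$ with $\beta=\sqrt{m^2+\sigma}$, $\gamma=\sqrt{m^2-\sigma}$, and $\phi$ is spanned by $\cosh(\beta y),\sinh(\beta y),\cosh(\gamma y),\sinh(\gamma y)$. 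Since \eqref{eq:eigenvalue-normalized} is invariant under $y\mapsto-y$, in each regime it suffices to look for $\phi$ either even or odd, which reduces the four boundary conditions to the two at $y=\well$.

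The heart of the proof is the analysis of the secular equations obtained by imposing those two boundary conditions on each ansatz, i.e. the vanishing of a $2\times2$ determinant. Writing $P=\sigma+(1-\nu)m^2>0$ and $Q=\sigma-(1-\nu)m^2$ (positive, by the energy bound), a direct computation gives, in regime (A), $\beta Q^2\tanh(\beta\well)+\gamma P^2\tan(\gamma\well)=0$ for even $\phi$ and a companion equation for odd $\phi$; as $\gamma$ runs over $(0,+\infty)$ the factor $\tan(\gamma\well)$ sweeps $(-\infty,0)$, resp. $(0,+\infty)$, on each interval of a period while the other factors stay continuous, which I would turn into the statement that, for each $m$, these equations have a strictly increasing sequence of solutions $\widetilde\lambda_{j,m}\uparrow+\infty$ with $\widetilde\lambda_{j,m}>(\kappa+1)m^4$, giving $(i)$. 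In regime (C) the even ansatz gives $\gamma P^2\tanh(\gamma\well)=\beta Q^2\tanh(\beta\well)$, whose left side vanishes at $\sigma=m^2$ and whose right side vanishes at $\sigma=(1-\nu)m^2$, so the intermediate value theorem produces a root with $\sigma\in((1-\nu)m^2,m^2)$, i.e. $\widetilde\lambda_m^-\in([(1-\nu)^2+\kappa]m^4,(\kappa+1)m^4)$, which is $(iii)$; the odd ansatz gives $\gamma P^2\coth(\gamma\well)=\beta Q^2\coth(\beta\well)$, and evaluating the difference of its two sides as $\sigma\uparrow m^2$ shows it is negative exactly when \eqref{eq:coth} holds, which is the mechanism producing the eigenvalue $\widetilde\lambda_m^+\in(\widetilde\lambda_m^-,(\kappa+1)m^4)$ of $(iv)$. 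In regime (B) the even ansatz is readily seen to force $\phi\equiv0$, while the odd ansatz $\phi=b\sinh(\sqrt2\,my)+dy$ is nontrivial exactly when $\tanh(\sqrt2\,m\well)=\bigl(\tfrac{\nu}{2-\nu}\bigr)^2\sqrt2\,m\well$, i.e. when $\sqrt2\,m\well$ equals the unique positive root $\overline s$ of $\tanh s=(\nu/(2-\nu))^2 s$ — uniqueness follows because $0<\nu<\tfrac12$ makes the slope $(\nu/(2-\nu))^2$ less than $\tanh'(0)=1$ while $\tanh$ is concave — which is $(ii)$. In each case the explicit eigenfunctions in the statement are recovered by solving the $2\times2$ system for the ratio of the coefficients and clearing denominators, and the monotonicity of $m\mapsto\widetilde\lambda_m^{\pm}$ follows by comparing the secular equations for consecutive values of $m$. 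Finally $(v)$ is automatic: the mode decomposition is exhaustive, regimes (A)--(C) cover all $\widetilde\lambda>\kappa m^4$ for each $m$, and $\widetilde\lambda\le\kappa m^4$ has been excluded, so there are no other eigenvalues and every eigenfunction is a linear combination of the single-mode ones listed.

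The step I expect to be the main obstacle is the fine study of the secular equations in regimes (A) and (C): producing a root in each prescribed interval is elementary, but ruling out spurious roots — so that $(i)$ really is a single increasing sequence per mode and the roots in $(iii)$ and $(iv)$ are unique in their intervals — together with the monotonicity of $m\mapsto\widetilde\lambda_m^{\pm}$, requires a careful discussion of the monotonicity of the secular functions in $\sigma$; this is also where condition \eqref{eq:coth} enters, namely as the precise threshold for the sign at the endpoint $\sigma=m^2$ of the odd secular function of regime (C).
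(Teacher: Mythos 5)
Your proof is correct and follows essentially the same mode--decomposition and secular--equation route as the paper, but it differs in one genuinely useful place. The paper \emph{explicitly} analyses the regimes $0<\widetilde\lambda<\kappa m^4$ (where the characteristic roots are $\pm\beta\pm i\gamma$ and the solutions involve products $\cosh(\beta y)\cos(\gamma y)$ etc.) and $\widetilde\lambda=\kappa m^4$, writing out the $2\times2$ systems \eqref{eq:system-1}--\eqref{eq:system-4} and proving by hand, via the estimates $\sinh t\cosh t>t$ and $|\sin t\cos t|\le t$ together with \eqref{eq:rel-beta-gamma}, that the corresponding determinant conditions \eqref{eq:det-1}--\eqref{eq:det-4} can never hold. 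You replace this entire case analysis with a short variational observation: plugging a single-mode function $\phi(y)\sin(mx)$ into the quadratic form of Proposition~\ref{l:equivalence} and completing squares gives
\[
\int_{-\well}^{\well}\Bigl[\nu\bigl(\phi''-m^2\phi\bigr)^2+(1-\nu)(\phi'')^2+2(1-\nu)m^2(\phi')^2+(1-\nu+\kappa)m^4\phi^2\Bigr]\,dy \, ,
\]
a sum of non-negative terms, which forces every mode-$m$ eigenvalue to satisfy $\widetilde\lambda>(1-\nu+\kappa)m^4>\kappa m^4$ and hence discards at a stroke the two troublesome regimes. This is a cleaner argument and in fact yields a slightly sharper lower bound than the $[(1-\nu)^2+\kappa]m^4$ of the statement. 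The remaining three regimes match the paper exactly once you notice, as you did, that $\beta^2-\nu m^2=P=\sigma+(1-\nu)m^2$, $\gamma^2\mp\nu m^2=\pm Q=\pm(\sigma-(1-\nu)m^2)$ and $\beta^2-(2-\nu)m^2=Q$, which compresses the paper's systems \eqref{eq:system-5}--\eqref{eq:system-10} into your secular equations $\gamma P^2\tanh(\gamma\well)=\beta Q^2\tanh(\beta\well)$ (even, regime C), $\gamma P^2\coth(\gamma\well)=\beta Q^2\coth(\beta\well)$ (odd, regime C), $\gamma P^2\tan(\gamma\well)+\beta Q^2\tanh(\beta\well)=0$ (even, regime A), and $\tanh(\sqrt2 m\well)=\bigl(\tfrac{\nu}{2-\nu}\bigr)^2\sqrt2 m\well$ (regime B). As you anticipate, the part that still requires care --- uniqueness of the root in each interval in $(iii)$--$(iv)$, the monotonicity of $m\mapsto\widetilde\lambda_m^{\pm}$, and the strictly-increasing structure of the sequences $\widetilde\lambda_{j,m}$ in $(i)$ --- is exactly where the paper itself defers to the monotonicity lemmas of \cite[Lemmas 7.1--7.5]{FeGa}, applied after the change of variable $\widetilde\lambda\mapsto\sigma^2=\widetilde\lambda-\kappa m^4$; your remark that this shift reduces the orthotropic ODE to the isotropic one of \cite{FeGa} is precisely the reduction the author uses.
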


\begin{remark}
  We observe that in Theorem \ref{t:eigenvalues} (i) the constants $a,b,c,d$ admit precise but implicit characterizations as solutions of quite involved algebraic equations. For this reason, in order to provide a more clear exposure, such characterizations are not reported in the statement of Theorem \ref{t:eigenvalues} but they can be found in its proof, see Section \ref{s:proof-t:eigenvalues}.
\end{remark}

\section{The behavior of orthotropic plates of small width} \label{s:small-width}

The purpose of this section is to prove some convergence results for problem \eqref{eq:model-plate} and the related eigenvalue problem \eqref{eq:eigenvalue-original}, as $\ell\to 0$. As expected, we confirm that also in the present model, the plate behaves as a one-dimensional beam when its width is relatively small if compared with its length; we recall that the convergence results for the plate equation with an external vertical load as $\ell\to 0$ were proved in \cite{FeGa} in the case of the classical isotropic plate. We also prove a spectral convergence result inspired by the arguments contained in \cite{ArFeLa, ArLa}.

If we see the plate as a parallelepiped-shaped beam $(0,L)\times (-\ell,\ell)\times \left(-\frac d2,\frac d2\right)$, we are led to the problem
\begin{equation}  \label{eq:beam-equation}
  \begin{cases}
   EI \psi''''=2\ell f \qquad \text{in } (0,L) \, , \\[6pt]
   \psi(0)=\psi(L)=\psi''(0)=\psi''(L)=0 \, ,
  \end{cases}
\end{equation}
where $f=f(x)$ represents a vertical load per unit of surface and, as a consequence, $2\ell f$ represents a vertical load per unit of length, in accordance with the classical model of an elastic hinged beam; moreover $E$ represents the Young modulus of the beam and $I=\frac{d^3 \ell}{6}=\int_{(-\ell,\ell)\times \left(-\frac d2,\frac d2\right)} z^2 \, dydz$ the moment of inertia of its cross section. Hence we have that the function $\psi$ introduced in \eqref{eq:beam-equation} solves the equation
\begin{equation} \label{eq:beam-equation-2}
  \frac{Ed^3}{12} \, \psi''''=f \qquad \text{in } (0,L) \, .
\end{equation}

In order to compare the behavior of the plate with the one of the beam for $\ell$ small, we assume that
\begin{equation} \label{eq:E-KAPPA}
  E=(\kappa+1-\nu^2) \mathcal K
\end{equation}
where $\mathcal K$ is defined in \eqref{eq:write-K}, $\kappa$ in \eqref{eq:write-k} and $\nu$ in \eqref{eq:write-nu}.

A simple computations show that \eqref{eq:E-KAPPA} implies $E=E_1$, with $E_1$ as in \eqref{eq:Hook-2}.

\begin{theorem} \label{t:conv-poisson}
  Assume \eqref{eq:ipotesi-Young}, \eqref{eq:ipotesi-Poisson} and \eqref{eq:E-KAPPA}. Let $f\in L^2(\Omega)$ be a function depending only on the $x$ variable, let $u_\ell$ be the corresponding solution of problem \eqref{eq:model-plate} and let $\psi$ be the unique solution of \eqref{eq:beam-equation}. Then the following converge holds
  \begin{equation*}
\sup_{(x,y)\in \Omega} |u_\ell(x,y)-\psi(x)|\to 0 \qquad \text{as } \ell\to 0 \, .
\end{equation*}
\end{theorem}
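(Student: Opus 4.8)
The plan is to compare $u_\ell$ with $\psi$ directly, after rescaling to a fixed reference rectangle, the crucial point being that assumption \eqref{eq:E-KAPPA} (equivalently $E=E_1$) makes the residual of $\psi$ in the plate problem explicitly computable. Regard $\psi$ as an element of $H^2_*(\Omega)$ constant in $y$ (legitimate since $\psi(0)=\psi(L)=0$) and set $w_\ell:=u_\ell-\psi$. Inserting $\psi$ into $(\cdot,\cdot)_{H^2_*}$ gives $(\psi,v)_{H^2_*}=\int_\Omega[(1+\kappa)\psi_{xx}v_{xx}+\nu\psi_{xx}v_{yy}]\,dxdy$ for every $v\in H^2_*(\Omega)$, while testing the weak form of \eqref{eq:beam-equation-2} against such $v$ and using $v=0$ on $\{0,L\}\times(-\ell,\ell)$ yields $\int_\Omega fv\,dxdy=\frac{Ed^3}{12}\int_\Omega\psi_{xx}v_{xx}\,dxdy$. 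Subtracting from \eqref{eq:plate-variational} and using $E=(\kappa+1-\nu^2)\mathcal K$ produces the error identity
\begin{equation*}
(w_\ell,v)_{H^2_*}=-\nu\int_\Omega\psi_{xx}\,(\nu v_{xx}+v_{yy})\,dxdy\qquad\text{for all }v\in H^2_*(\Omega).
\end{equation*}
Choosing $v=w_\ell$, bounding the $H^2_*$-density below by $(1-\nu)|D^2w_\ell|^2$, applying Cauchy--Schwarz and $\|\psi_{xx}\|_{L^2(\Omega)}^2=2\ell\,\|\psi''\|_{L^2(0,L)}^2$ gives $\|w_\ell\|_{H^2_*}\le C\sqrt\ell$; then, by the Poincar\'e inequality in the $x$ variable (with $\ell$-independent constants, since $w_\ell$ vanishes on $\{0,L\}\times(-\ell,\ell)$), also $\|w_\ell\|_{L^2(\Omega)},\ \|\partial_xw_\ell\|_{L^2(\Omega)},\ \|D^2w_\ell\|_{L^2(\Omega)}=O(\sqrt\ell)$.

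Next I would rescale to $Q:=(0,L)\times(-1,1)$ via $\widehat w_\ell(x,\eta):=w_\ell(x,\ell\eta)$, $\widehat u_\ell(x,\eta):=u_\ell(x,\ell\eta)$. The previous bounds translate into $\widehat w_\ell$ bounded in $H^2(Q)$ (boundedness of $\partial_\eta\widehat w_\ell$ follows from the slicewise interpolation inequality $\|\partial_\eta g\|_{L^2(-1,1)}^2\le C(\|g\|_{L^2}\|\partial_\eta^2 g\|_{L^2}+\|g\|_{L^2}^2)$), together with $\partial_x\partial_\eta\widehat w_\ell\to0$ and $\partial_\eta^2\widehat w_\ell\to0$ in $L^2(Q)$ (these gain extra powers of $\ell$ in the rescaling). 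By Rellich's theorem a subsequence satisfies $\widehat w_{\ell_k}\rightharpoonup\bar w$ in $H^2(Q)$ and $\widehat w_{\ell_k}\to\bar w$ in $C^0(\overline Q)$, and from $\partial_x\partial_\eta\bar w=\partial_\eta^2\bar w=0$ and $\widehat w_\ell(0,\cdot)=\widehat w_\ell(L,\cdot)=0$ one obtains $\bar w(x,\eta)=A(x)$ with $A(0)=A(L)=0$. Since $\sup_\Omega|u_\ell-\psi|=\|\widehat w_\ell\|_{C^0(\overline Q)}$, it suffices to prove $A\equiv0$; by uniqueness of the limit this upgrades to convergence of the whole family.

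To identify $A$ I would pass to the limit in the error identity along two families of test functions, writing $r_\ell:=\partial_x^2\widehat w_\ell\rightharpoonup A''$, $p_\ell:=\ell^{-2}\partial_\eta^2\widehat w_\ell$, $q_\ell:=\ell^{-1}\partial_x\partial_\eta\widehat w_\ell$ (all bounded in $L^2(Q)$; up to a further subsequence $p_\ell\rightharpoonup\bar p$). First, with $v=\phi(x)$, $\phi\in H^2(0,L)$, $\phi(0)=\phi(L)=0$, the rescaled error identity becomes $\int_Q[(1+\kappa)r_\ell+\nu p_\ell+\nu^2\psi'']\,\phi''\,dx\,d\eta=0$; since $\phi''$ spans $L^2(0,L)$ this forces $(1+\kappa)\,\overline{r_\ell}+\nu\,\overline{p_\ell}+\nu^2\psi''=0$, hence in the limit $(1+\kappa)A''+\nu\,\overline{\bar p}+\nu^2\psi''=0$. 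Second, with $v(x,y)=\ell^2\phi(x)\chi(y/\ell)$, $\phi\in C_c^\infty(0,L)$, $\chi\in H^2(-1,1)$, every term of the rescaled identity carrying a positive power of $\ell$ disappears and what survives in the limit is $\int_Q[\nu A''+\bar p+\nu\psi'']\,\phi(x)\chi''(\eta)\,dx\,d\eta=0$; since $\phi$ and $\chi''$ run through dense subsets of $L^2(0,L)$ and $L^2(-1,1)$, this gives $\bar p(x,\eta)=-\nu A''(x)-\nu\psi''(x)$, in particular $\bar p$ independent of $\eta$. Substituting into the relation from the first family yields $(1+\kappa-\nu^2)A''=0$; since $1+\kappa-\nu^2>0$ by \eqref{eq:ipotesi-Young}--\eqref{eq:ipotesi-Poisson} and $A(0)=A(L)=0$, we conclude $A\equiv0$, which completes the proof.

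The main obstacle is the anisotropic scaling: $\|w_\ell\|_{H^2_*}=O(\sqrt\ell)$ is the natural size of $\psi$ itself, so the uniform convergence cannot be read off from a single compactness argument, and pinning down the weak limit $\bar w$ genuinely requires the two different scalings of test functions above; the reduction to $(1+\kappa-\nu^2)A''=0$ hinges precisely on the calibration $E=E_1$. One should also note that $f\in L^2$ makes $\psi\in H^4(0,L)$, so that $\psi''\in L^\infty(0,L)$ and all the integrals involving $\psi''$ are legitimate, and that Theorem~\ref{t:Lax-Milgram} guarantees $u_\ell\in H^2_*(\Omega)$; the overall scheme parallels the isotropic case of \cite{FeGa}, the only new ingredient being the systematic bookkeeping of the extra term $\kappa\,u_{xx}^2$ in $(\cdot,\cdot)_{H^2_*}$.
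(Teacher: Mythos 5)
Your proof is correct, but it follows a genuinely different route from the paper's own argument. The paper proceeds by explicit Fourier computation: it expands $u_\ell$ in a sine series in $x$, solves the resulting fourth-order $y$-ODE for each coefficient $Y_m$ in closed form (via the roots $\pm\beta\pm i\gamma$ of the characteristic polynomial), imposes the free boundary conditions to get a $2\times2$ linear system for the constants $a(m,\ell)$, $d(m,\ell)$, computes their limits as $\ell\to0$, and then identifies the termwise limit with the Fourier expansion of $\tfrac{1}{\kappa+1-\nu^2}\phi(x)$, using the calibration $(\kappa+1-\nu^2)\mathcal K=E$ at the very last step; the uniform passage of the sum to the limit is delegated to \cite[Theorem 3.3]{FeGa}. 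You instead stay entirely in the variational framework: the error identity $(u_\ell-\psi,v)_{H^2_*}=-\nu\int_\Omega\psi_{xx}(\nu v_{xx}+v_{yy})$ isolates the residual created by the calibration \eqref{eq:E-KAPPA}, the choice $v=u_\ell-\psi$ gives the $O(\sqrt\ell)$ energy bound, the anisotropic rescaling to $Q=(0,L)\times(-1,1)$ turns that bound into $H^2(Q)$-compactness, and the two scalings of test functions pin down the weak limit and force it to solve $(1+\kappa-\nu^2)A''=0$. This is essentially the same compactness machinery that the paper itself deploys for the spectral convergence Theorem \ref{t:spectral-convergence} in Lemma \ref{l:comp-conv} (following \cite{ArFeLa}), so your route is arguably more uniform with the rest of the paper and more robust — no closed-form ODE solutions, and the uniformity of the Fourier summation is replaced by the compact embedding $H^2(Q)\hookrightarrow C^0(\overline Q)$ — while the paper's explicit approach yields, as a bonus, quantitative control on $a(m,\ell)$, $d(m,\ell)$ and the precise $y$-profile of $u_\ell$ for small $\ell$. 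Two minor points worth noting in your write-up: (a) $\psi\in H^2_*(\Omega)$ as a $y$-constant function requires $\psi(0)=\psi(L)=0$, which the Navier conditions in \eqref{eq:beam-equation} provide, and (b) passing from the subsequential limit $A\equiv0$ to convergence of the whole family needs the standard subsequence-of-subsequences argument, which you invoke correctly.
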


Our next purpose is to prove spectral convergence of the plate model to the beam model as $\ell \to 0$.


In order to emphasize the dependence of the spectrum of \eqref{eq:eigenvalue-original} from $\ell$, we denote the eigenvalues in \eqref{eq:eig-plate} by
\begin{equation} \label{eq:eig-plate-bis}
  0<\lambda_1^\ell\le \lambda_2^\ell\le \dots \lambda_m^\ell \le \dots
\end{equation}
where each eigenvalue is repeated as many times as its multiplicity.

Assuming \eqref{eq:E-KAPPA} and recalling that $I=\frac{d^3\ell}{6}$, let us consider the eigenvalue problem for the beam
\begin{equation}  \label{eq:beam-eigenvalue}
  \begin{cases}
   \frac{E d^3}{12} \psi''''=\lambda \psi \qquad \text{in } (0,L) \, , \\[6pt]
   \psi(0)=\psi(L)=\psi''(0)=\psi''(L)=0 \, ,
  \end{cases}
\end{equation}
and its eigenvalues explicitly given by
\begin{equation} \label{eq:eig-beam}
  \lambda_m^0=\frac{E d^3 \pi^4 m^4}{12 L^4} \, ,\qquad m\ge 1 \, .
\end{equation}

\begin{theorem} \label{t:spectral-convergence}
   Assume \eqref{eq:ipotesi-Young}, \eqref{eq:ipotesi-Poisson} and \eqref{eq:E-KAPPA}. For any $m\ge 1$, let $\lambda_m^\ell$ and $\lambda_m^0$ be the eigenvalues defined in \eqref{eq:eig-plate-bis} and in \eqref{eq:eig-beam} respectively. Then we have spectral convergence of problem \eqref{eq:eigenvalue-original} to problem \eqref{eq:beam-eigenvalue}, in the sense that for any $m\ge 1$ we have $\lambda_m^\ell \to \lambda_m^0$ as $\ell \to 0$.
\end{theorem}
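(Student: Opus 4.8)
The plan is to reduce everything to the normalized problem \eqref{eq:eigenvalue-normalized} on $\widetilde\Omega=(0,\pi)\times(-\well,\well)$, where $\well=\frac{\pi}{L}\ell$, so that $\ell\to0$ becomes $\well\to0$. By the scaling \eqref{eq:eig-scaling} and the identity $E=E_1=(\kappa+1-\nu^2)\mathcal K$ following from \eqref{eq:E-KAPPA}, one has $\lambda_m^0=\frac{\pi^4d^3\mathcal K}{12L^4}(\kappa+1-\nu^2)m^4$, so the assertion $\lambda_m^\ell\to\lambda_m^0$ is equivalent to proving that the $m$-th eigenvalue $\widetilde\lambda_m^\ell$ of \eqref{eq:eigenvalue-normalized} converges to $(\kappa+1-\nu^2)m^4$ as $\well\to0$. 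Since the operator $\Delta^2+\kappa\partial_x^4$ with the Navier conditions at $x=0,\pi$ leaves each subspace of functions $\phi(y)\sin(mx)$ invariant and $\{\sin(mx)\}_{m\ge1}$ is complete, the spectrum of \eqref{eq:eigenvalue-normalized} is the union over $m\ge1$ of the spectra of these one-dimensional ``sectors'' (the structure already made explicit in Theorem \ref{t:eigenvalues}); write $\Lambda_1^{(m)}(\well)\le\Lambda_2^{(m)}(\well)\le\cdots$ for the eigenvalues of the $m$-th sector.

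The heart of the matter is a sharp analysis of the $m$-th sector. Inserting $u=\phi(y)\sin(mx)$ into the quadratic form $(u,u)_{H^2_*}$, integrating over $x$, and rescaling $y=\well\eta$, $\psi(\eta)=\phi(\well\eta)$, the Rayleigh quotient of the $m$-th sector becomes
\begin{equation*}
R_m^{\well}[\psi]=\frac{1}{\well^{4}}\,\frac{\int_{-1}^{1}\psi_{\eta\eta}^{2}}{\int_{-1}^{1}\psi^{2}}+\frac{m^{2}}{\well^{2}}\,\frac{2(1-\nu)\int_{-1}^{1}\psi_{\eta}^{2}-2\nu\int_{-1}^{1}\psi\psi_{\eta\eta}}{\int_{-1}^{1}\psi^{2}}+(1+\kappa)m^{4},
\end{equation*}
with $\psi$ ranging over $H^{2}(-1,1)$ (the free conditions at $y=\pm\well$ being natural). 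Setting $X=\frac{\int_{-1}^{1}\psi_{\eta\eta}^{2}}{\int_{-1}^{1}\psi^{2}}$, using $\int\psi_{\eta}^{2}\ge0$ and the Cauchy--Schwarz inequality $\int\psi\psi_{\eta\eta}\le\sqrt{X}\int\psi^{2}$, one gets the bound, uniform in $\well$,
\begin{equation*}
R_m^{\well}[\psi]\ge\frac{X}{\well^{4}}-\frac{2\nu m^{2}}{\well^{2}}\sqrt{X}+(1+\kappa)m^{4}=\Bigl(\frac{\sqrt{X}}{\well^{2}}-\nu m^{2}\Bigr)^{2}+(\kappa+1-\nu^{2})m^{4}\ge(\kappa+1-\nu^{2})m^{4}.
\end{equation*}
Hence $\Lambda_1^{(m)}(\well)\ge(\kappa+1-\nu^{2})m^{4}$ for every $\well>0$. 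The matching upper bound comes from testing $R_m^{\well}$ with $\phi(y)=1+\tfrac12\nu m^{2}y^{2}$: expanding in powers of $\well$ gives $R_m^{\well}[\phi]=(\kappa+1-\nu^{2})m^{4}+O(\well^{2})$. Thus $\Lambda_1^{(m)}(\well)\to(\kappa+1-\nu^{2})m^{4}$. For the remaining eigenvalues of the $m$-th sector I would use min-max: any two-dimensional $V\subset H^{2}(-1,1)$ contains some $\psi\ne0$ with $\int_{-1}^{1}\psi=0$, for which the Poincar\'e--Wirtinger inequality gives $\int\psi^{2}\le C\int\psi_{\eta}^{2}$; combined with the displayed estimate this yields $R_m^{\well}[\psi]\ge(\kappa+1-\nu^{2})m^{4}+\tfrac{2(1-\nu)m^{2}}{C\well^{2}}$, hence $\Lambda_2^{(m)}(\well)\to+\infty$ and, a fortiori, every eigenvalue of every sector other than $\Lambda_1^{(m)}(\well)$ diverges as $\well\to0$.

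To assemble the pieces, fix $m$. The uniform lower bounds $\Lambda_1^{(j)}(\well)\ge(\kappa+1-\nu^{2})j^{4}$, the limits $\Lambda_1^{(j)}(\well)\to(\kappa+1-\nu^{2})j^{4}$ (so that $\Lambda_1^{(1)}(\well)<\Lambda_1^{(2)}(\well)<\cdots$ for $\well$ small, in agreement with the monotonicity in Theorem \ref{t:eigenvalues}(iii)), and the divergence as $\well\to0$ of every other eigenvalue of every sector, together imply that for $\well$ small enough (depending on $m$) the $m$ smallest eigenvalues of \eqref{eq:eigenvalue-normalized} are exactly $\Lambda_1^{(1)}(\well)\le\cdots\le\Lambda_1^{(m)}(\well)$. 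Consequently $\widetilde\lambda_m^\ell=\Lambda_1^{(m)}(\well)\to(\kappa+1-\nu^{2})m^{4}$, and undoing the scaling via \eqref{eq:eig-scaling} gives $\lambda_m^\ell\to\frac{\pi^4d^3\mathcal K}{12L^4}(\kappa+1-\nu^{2})m^{4}=\frac{Ed^3\pi^4m^{4}}{12L^4}=\lambda_m^0$, as claimed.

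I expect the only real obstacle to be the global bookkeeping of the spectrum: one must exclude that, for small positive $\well$, an eigenvalue coming from a high-$m$ sector or a second eigenvalue $\Lambda_2^{(m)}(\well)$ slips below $\Lambda_1^{(m)}(\well)$, which is precisely why obtaining the \emph{sharp} constant $(\kappa+1-\nu^{2})m^{4}$ in the Cauchy--Schwarz lower bound (rather than the coarser interval of Theorem \ref{t:eigenvalues}(iii)) is essential. An alternative, more abstract route --- closer to \cite{ArFeLa, ArLa} --- avoids separation of variables: introduce averaging and extension operators between $L^{2}(\widetilde\Omega)$ and $L^{2}(0,\pi)$, prove compact convergence as $\well\to0$ of the resolvent of $\Delta^2+\kappa\partial_x^4$ to that of $\frac{Ed^3}{12}\,\partial_x^4$, and quote the abstract spectral-convergence theorem; the analytic core is again the one-dimensional estimate above.
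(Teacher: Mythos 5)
Your proof is correct, but it takes a genuinely different route from the paper. The paper instead invokes the abstract spectral-convergence machinery of \cite{ArFeLa, ArLa}: it sets up the weighted Hilbert spaces $\mathcal H_\ell=L^2(\Omega_\ell;(2\ell)^{-1}dxdy)$ and $\mathcal H_0=L^2(0,L)$ together with extension operators $\mathcal E_\ell$, and the technical core (Lemma \ref{l:comp-conv}) is to show compact convergence $B_\ell\overset{C}{\longrightarrow}B_0$ of the resolvents. That lemma is proved by the same $y=\ell\eta$ rescaling you use, but at the level of the inhomogeneous equation: one obtains from the quadratic form the a priori bounds $\|\partial^2_{xy}\tilde u_\ell\|_{L^2}=O(\ell)$, $\|\partial^2_{yy}\tilde u_\ell\|_{L^2}=O(\ell^2)$, extracts weak limits, identifies the limit as $y$-independent, recovers the auxiliary weak limit $w=-\nu\,\partial^2_x u$ (which is exactly where the Poisson-ratio correction $-\nu^2$ appears), and lands on the beam equation $\frac{Ed^3}{12}u''''=\mathcal Mf$. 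The conclusion then follows by quoting the general Theorem 5 of \cite{ArFeLa}.

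Your argument replaces all of that with separation of variables in $x$ plus a sharp one-dimensional Rayleigh-quotient estimate per Fourier sector, where the completing-the-square bound $R\ge\bigl(\tfrac{\sqrt X}{\well^2}-\nu m^2\bigr)^2+(\kappa+1-\nu^2)m^4$ produces the correct limiting constant directly, the test function $1+\tfrac12\nu m^2 y^2$ gives the matching upper bound, and Poincar\'e--Wirtinger kills all higher eigenvalues in each sector; the assembly via min-max is the delicate bookkeeping you identify yourself, and you carry it out correctly (the uniform lower bound $\Lambda_1^{(j)}\ge(\kappa+1-\nu^2)j^4$ handles the infinitely many sectors $j>m$ at one stroke). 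Your route is more elementary and self-contained, exploits the rectangular geometry and Navier conditions at $x=0,\pi$ in an essential way, and makes the origin of the factor $\kappa+1-\nu^2$ completely transparent. The paper's resolvent-convergence route is less explicit but is geometry-free: it would survive perturbations of the domain or lower-order terms that destroy separability, and it sits naturally alongside the proof of Theorem \ref{t:conv-poisson}, reusing the same weak-limit identification. You correctly flag this alternative at the end of your write-up, and the two proofs can fairly be regarded as the concrete and abstract incarnations of the same rescaling idea.
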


\section{Static behavior of an orthotropic plate subject to external loads} \label{s:static}

According to Section \ref{s:orthotropic-plate}, consider a plate made of an orthotropic material with a one-dimensional
reinforcement in the $x$ direction. As already explained in Section \ref{s:orthotropic-plate}, under the validity of condition \eqref{eq:ass-mu12}, the behavior of the plate is completely determined by the three coefficients $E_1$, $E_2$ and $\nu=\nu_{12}$.

Being the deck a ``mixture'' of concrete and metal, according with \cite[p.13]{ammann}, we choose for the Poisson ratio the following value
\begin{equation*} 
\nu=0.2 \, .
\end{equation*}

The purpose of this section is to determine physically significant
values for the two Young moduli $E_1$ and $E_2$ having in mind the Tacoma Narrows
Bridge built in 1940 and the related federal report \cite{ammann}. We also take
inspiration from the paper \cite{AG17}.

The values of $E_1$ and $E_2$ for the orthotropic plate are computed in terms of the flexural and torsional rigidities
coming from the beam-rod model.

Let $L$, $\ell$, $d$ and
$\Omega=(0,L)\times (-\ell,\ell)$ be as in Section
\ref{s:orthotropic-plate}. Looking at \eqref{eq:energy-v-t}, we have that the flexural and torsional energies of a portion $(a,b)$ of the deck are given by
\begin{equation*}
  \frac{EI}{2} \int_a^b |\psi''(x)|^2 \, dx \qquad \text{and} \qquad \frac{\mathcal R_T}{2} \int_a^b |\theta'(x)|^2 \, dx
\end{equation*}
where $\psi=\psi(x)$ is the vertical displacement of the midline of the road, $\theta$ the torsion angle (see Figure \ref{f:vertical-torsional}) and $\mathcal R_T=\mu K$ the torsional rigidity.

We observe that looking at a very flexible deck, like the one of the Tacoma Narrows Bridge, the torsional constant $K$ of a real bridge appears to be of two orders of magnitude smaller than the expected one computed with formula \eqref{eq:I-K}. This apparent paradox is simply due to the fact that the cross section of a real bridge cannot be considered as a solid rectangle of steel and concrete with height $d$ and width $2\ell$, since its geometric structure is quite more complex and ``thin''  as one can see from Figure \ref{f:sezione-Tacoma} taken from \cite{Ricciardelli-Marra}.

\begin{figure}[th]
\begin{center}
  {\includegraphics[scale=0.45]{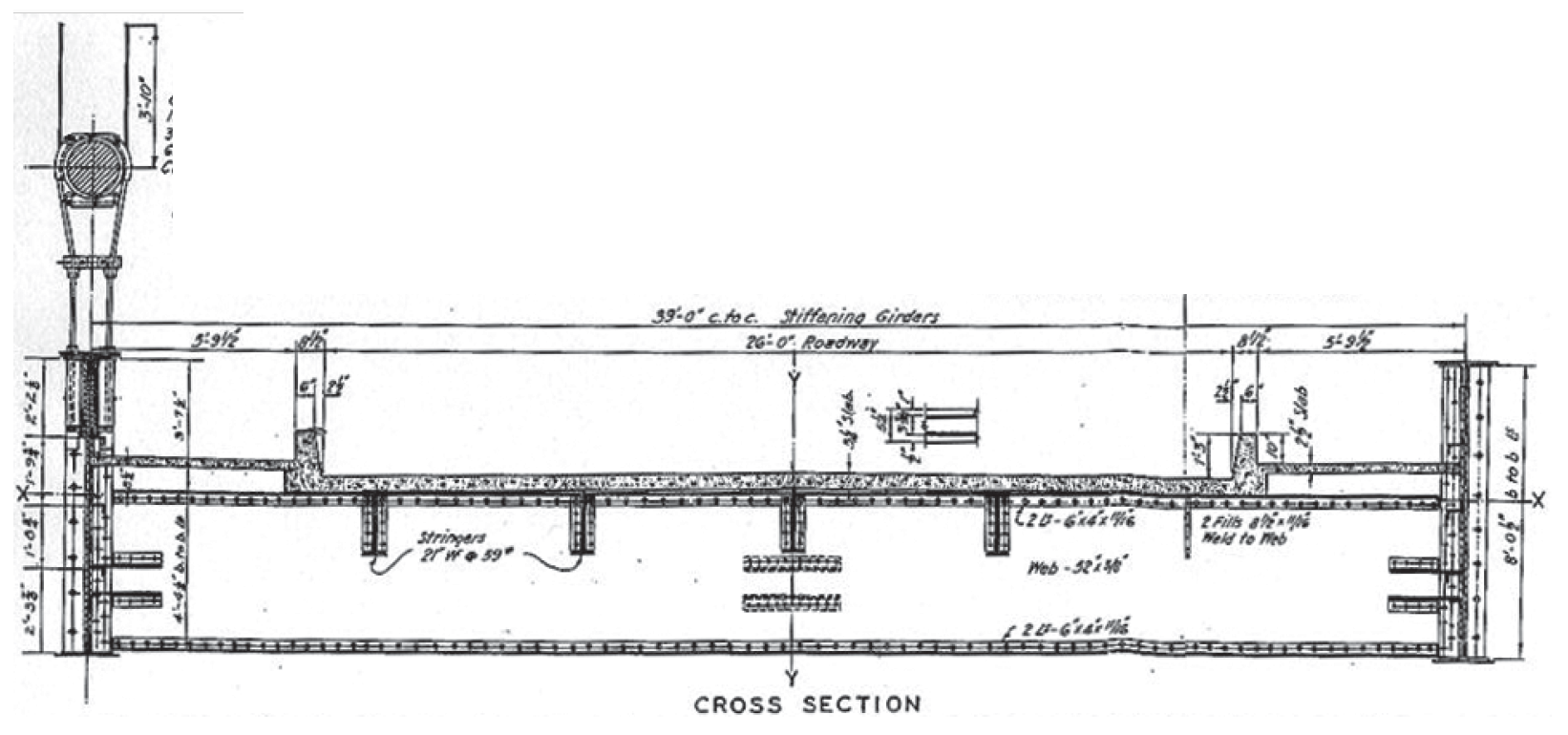}}
\caption{The cross section of the Tacoma Narrows Bridge.} \label{f:sezione-Tacoma}
\end{center}
\end{figure}





Looking at the statements of Theorems \ref{t:conv-poisson}-\ref{t:spectral-convergence}, it appears reasonable to assume here again condition \eqref{eq:E-KAPPA}.
In order to evaluate the torsional rigidity of the plate, we compare the elastic energy of a portion of plate, with the torsional energy of the corresponding portion of rod. The plate will be subject to a displacement $u$ vanishing in the midline of the road of the bridge and giving rise to a torsion; the rod will be subject to the torsion angle $\theta$ corresponding to $u$. In other words for $\rho>0$ and $\theta_0>0$ we consider
\begin{align} \label{eq:u-theta}
   &  u(x,y)=\tfrac{\theta_0}{\rho} \left(x-\tfrac L2\right)y \qquad (x,y)\in \Omega_\rho:=\left(\tfrac L2-\rho,\tfrac L2+\rho\right)
   \times (-\ell,\ell) \, ,
    \\[6pt]
  \notag &  \theta(x)=\tfrac{\theta_0}{\rho} \left(x-\tfrac L2\right) \qquad x\in \left(\tfrac L2-\rho,\tfrac L2+\rho\right) \, .
\end{align}
The function $u$ in \eqref{eq:u-theta} is obtained after carrying out the first order expansion $\tan \theta \approx \theta$ valid for small values of $\theta$.
Computing the respective elastic energies we obtain
\begin{align*}
 & \mathbb E_{B}(u)=\frac{d^3\mathcal K}{24}\int_{\Omega_\rho}\left(\nu|\Delta u|^2+(1-\nu)|D^2 u|^2+\kappa\,
   u_{xx}^2\right) dxdy=\frac{d^3\mathcal K(1-\nu)\ell}{3\rho} \, \theta_0^2  \, , \\[10pt]
 &  \mathbb E_{T}(\theta)=\frac{\mathcal R_T}{2} \int_{\frac L2-\rho}^{\frac L2 +\rho} |\theta'(x)|^2 \, dx
 =\frac{\mathcal R_T}{\rho} \, \theta_0^2 \, ,
\end{align*}
where we denoted by $\mathcal R_T$ the torsional rigidity of the rod. Equating the two elastic energies we obtain
\begin{equation} \label{eq:Kappa-R}
  \frac{d^3\mathcal K(1-\nu)\ell}{3}=\mathcal R_T \, .
\end{equation}

Recalling \eqref{eq:id0}, \eqref{eq:write-K}, \eqref{eq:E-KAPPA} and the fact that \eqref{eq:E-KAPPA} implies $E_1=E$, exploiting \eqref{eq:Kappa-R} we may represent $E_2$ in terms of $E$, $\nu$, $\mathcal R_T$ and $I=\frac{d^3\ell}{6}$:
\begin{equation} \label{eq:val-E2}
  E_2=\frac{E\mathcal R_T}{2EI(1-\nu)+\nu^2 \mathcal R_T} \, .
\end{equation}
We now look at the following values suggested in \cite{AG17} for $I$, $E$ and shear modulus $\mu$:
\begin{align*}
  & E=210 \ GPa \, , \quad I=0.15 \ m^4 \, , \quad \mu=81 \ GPa \, . 
\end{align*}
Concerning the torsional rigidity $\mathcal R_T=\mu K$, we provide a reasonable value of $K$ using well-known formulas from construction engineering. We can interpret the section of the deck of the Tacoma Narrows Bridge as made of three rectangular components like in Figure \ref{f:section}.
\begin{figure}[th]
\begin{center}
 {\includegraphics[scale=0.5]{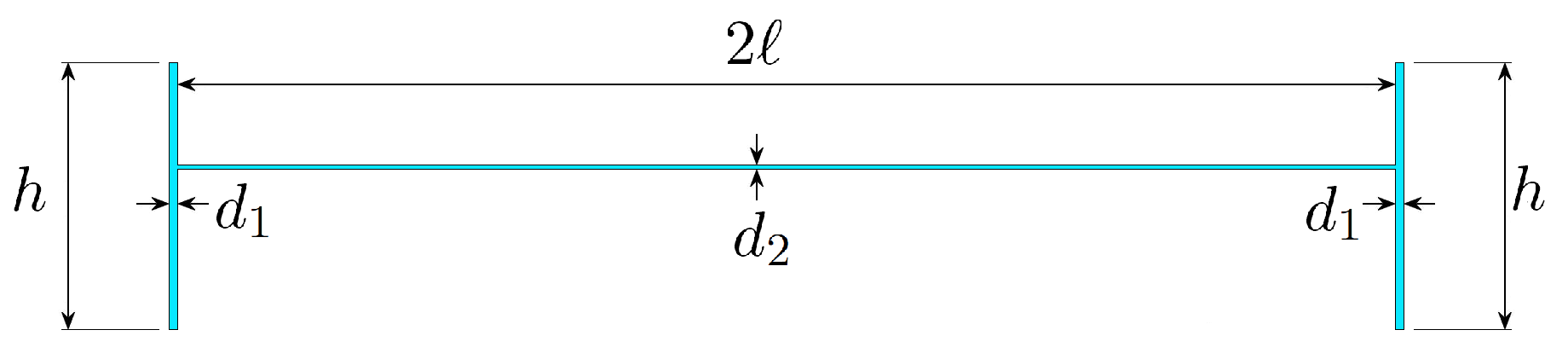}}
\caption{Cross section of the deck and computation of the rigidity constant $K$.} \label{f:section}
\end{center}
\end{figure}
The value of $K$ is then determined by the contribution of the three rectangles:
\begin{equation} \label{eq:comp-K}
  K\approx \frac 13 \, h \, d_1^3 +\frac 13 \, h \, d_1^3 +\frac 13 \cdot 2\ell \, d_2^3 = 5 \cdot 10^{-3} \ m^4
\end{equation}
where we chose $d_1=0.15 \ m$, $d_2=0.05 \ m$, $h=2 \ m$ and $2\ell=12 \ m$.
For a reference about formula \eqref{eq:comp-K} we quote \cite{Tullini}.

The value of $K$ proposed by \cite{Plaut} is $6.07 \cdot 10^{-6} \, m^4$; we believe that this value seems to be too small taking also into account the numerical experiments obtained below.

For the torsional rigidity we then obtain \begin{equation*}
  \mathcal R_T=\mu \cdot K=(8.1\cdot 10^{10})\cdot (5 \cdot 10^{-3}) \ N\cdot m^2 = 4.05 \cdot 10^8  \ N\cdot m^2
\end{equation*}
so that replacing these values in \eqref{eq:val-E2} we obtain $E_2\approx 1.687 \cdot 10^9 \ Pa$.

In our model of orthotropic plate the value of $E_2$ determines the torsional stiffness of the plate being it the coefficient of the mixed derivative $\frac{\partial^4 u}{\partial x^2 \partial y^2}$ appearing in the corresponding fourth order equation, as one can see from Remark \ref{r:1}.

Summarizing all the above assumptions, we aim to compare the behavior of an orthotropic plate satisfying
\begin{equation}\label{eq:coeff-E1-E2-nu}
  E_1=2.1\cdot 10^{11} \ Pa \, , \qquad E_2=1.687 \cdot 10^9  \ Pa \, , \qquad \nu=0.2
\end{equation}
with the one of a beam-rod system satisfying
\begin{equation}\label{eq:coeff-E-I-R}
  E=2.1\cdot 10^{11} \ Pa \, , \qquad I=0.15 \ m^4 \, , \qquad \mathcal R_T=4.05 \cdot 10^8 \ N\cdot m^2 \, .
\end{equation}
According with \cite{AG17}, in both models we assume for the length and the width of the bridge
\begin{equation}\label{eq:L-ell}
  L=853.44 \ m \, , \qquad \ell=6 \ m \, .
\end{equation}
We observe that in both models, thickness cannot be compared with the actual thickness of the deck but it has to be considered as a sort of effective thickness. In any case it is not necessary to explicit its value since it can be ``absorbed'' by the other parameters. More precisely, for the vertical displacement of the beam, $d$ can be absorbed by the moment of inertia $I$. On the other hand, for torsional deformations the corresponding equation is completely determined by the torsional rigidity $\mathcal R_T$.

Denoting by $\mathcal R=\frac{d^3 \mathcal K}{12}$ the rigidity of the plate, by \eqref{eq:write-K}, \eqref{eq:E-KAPPA}, the fact that $I=\frac{d^3 \ell}{6}$, \eqref{eq:coeff-E1-E2-nu} and \eqref{eq:coeff-E-I-R}, we infer
\begin{equation*}
  \mathcal R=\frac{d^3 E_1}{12(1+\kappa-\nu^2)}=\frac{E_1 I}{2(1+\kappa-\nu^2)\ell}=2.109 \cdot 10^7 \ N\cdot m \, .
\end{equation*}

The remaining part of this section is devoted to some numerical tests which have the purpose to evaluate the response of the two models under the action of the same vertical loads or the same moments of forces.

\medskip

{\bf A constant vertical load.} Let us consider a now a constant vertical load per unit of surface acting on the plate: in other words we consider a function $f(x,y)\equiv -f_0$ for some positive constant $f_0$.

Denoting by $u$ the corresponding solution of \eqref{eq:model-plate} and expanding in Fourier series
\begin{equation*}
  u(x,y)=\sum_{m=1}^{+\infty} Y_m(y)\sin\left(\frac{m\pi x}{L}\right) \, , \qquad (x,y)\in (0,L)\times (-\ell,\ell) \, ,
\end{equation*}
we infer for the functions $Y_m$ the following boundary value problem
\begin{equation} \label{eq:ode-y-2}
 \begin{cases}
  Y_m''''(y)-\tfrac{2m^2 \pi^2}{L^2}\, Y_m''(y)+\tfrac{(\kappa+1)m^4\pi^4}{L^4} Y_m(y)=-\frac{48f_0}{d^3\mathcal K\pi m} \, , \\[7pt]
  Y_m''(\pm \ell)-\nu \frac{m^2\pi^2}{L^2}\, Y_m(\pm \ell)=0 \, , \\[7pt]
  Y_m'''(\pm \ell)+(\nu-2)\frac{m^2\pi^2}{L^2}\, Y_m'(\pm \ell)=0 \, ,
 \end{cases}
\end{equation}
when $m$ is odd and $Y_m\equiv 0$ when $m$ is even. We point out that problem \eqref{eq:ode-y-2} can be solved explicitly, see the proof of Theorem \ref{t:conv-poisson}, but for simplicity we omit here the explicit solution favouring the graphical approach.

With a load $f_0=10^2 N/m^2$ we compute numerically the vertical displacement of the plate in the midline of the road. The graph of $u(x,0)$ is shown in Figure \ref{f:carico-costante} on the left.

The corresponding vertical load per unit of length is given by $F(x)\equiv -f_0 \cdot 2\ell$. Inserting $F$ in the first equation of \eqref{eq:system-u-theta} and assuming the Navier boundary conditions $\psi(0)=\psi(L)=\psi''(0)=\psi''(L)=0$ we obtain the explicit solution
\begin{equation*}
  \psi(x)=-\frac{\ell f_0}{12EI} (x^4-2Lx^3+L^3 x) \qquad \text{for any } x\in (0,L) \, .
\end{equation*}
The graph of $\psi$ is shown in Figure \ref{f:carico-costante} on the right.

\begin{figure}[th]
\begin{center}
   {\includegraphics[scale=0.295]{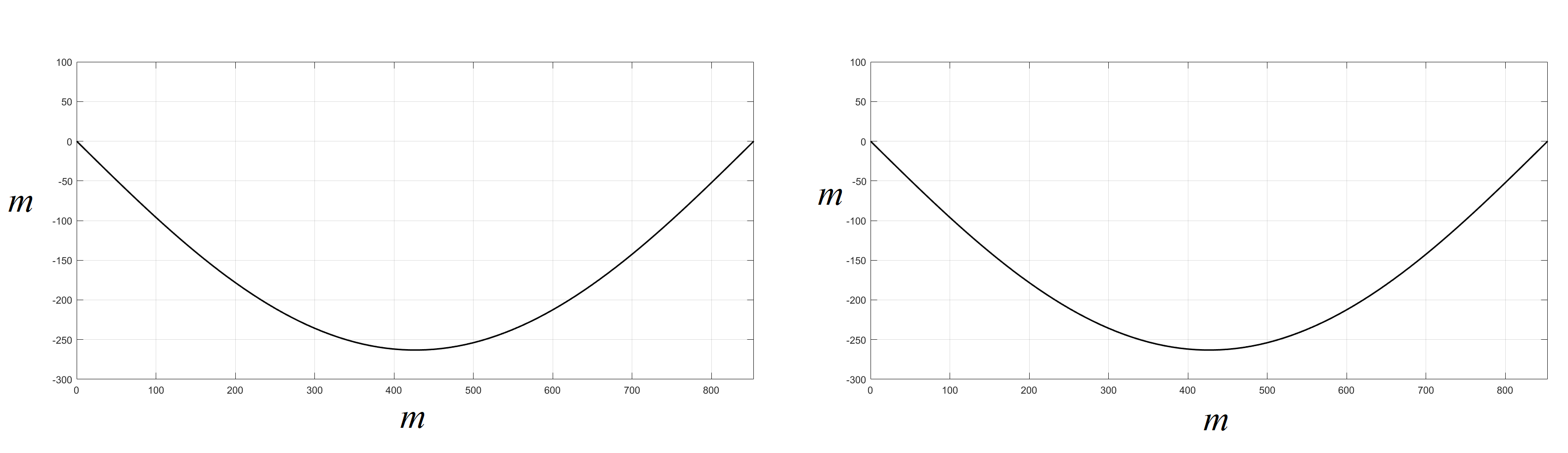}}
\caption{Downward vertical load of $1200 \ N/m$: $u(\cdot,0)$ on the left and $\psi$ on the right expressed in meters.} \label{f:carico-costante}
\end{center}
\end{figure}

We observe that the graphs of the two functions $x\mapsto u(x,0)$ and $x\mapsto \psi(x)$ are essentially indistinguishable; we specify that their difference is of the order of $10^{-3}$.

One may wonder about the fact that the vertical load per unit of length we have chosen in the numerical simulation is much smaller if compared with the weight per unit of length of the deck: 1200 $N/m$ versus $7.06\cdot 10^4 \ N/m$. Despite the quite small load, we observe in Figure \ref{f:carico-costante} a downward vertical displacement at $x=L/2$ of more than $250 \ m$. This happens because in our models for the deck, we only consider the contribution of bending and we neglect the contribution of stretching (see \cite{AlBeGa, Ba, BaBeFeGa, Di, GhiGo, Gro, Ya} for models describing stretching of the deck); with a so large deflection as the one observed in Figure \ref{f:carico-costante}, stretching cannot be neglected but in a real suspension bridge such deflections of the deck are impossible due to the presence of the cables, so that in this last situation one can only focus on the contribution of bending.
We point out that the present numerical simulation has the only purpose to test the response given by the two models and to compare them.

\medskip

{\bf A vertical load of the type $\mathbf{f(x,y)=-f_0 \, sin\left(\frac{\pi x}{L}\right)}$.} As in the previous example we choose $f_0=10^2 \, N/m^2$ and we put
\begin{equation} \label{eq:carico-f0sin}
 f(x,y)=-f_0 \sin\left(\frac{\pi x}{L}\right) \qquad \text{and}  \qquad
 F(x)=-2\ell f_0 \sin\left(\frac{\pi x}{L}\right) \, .
\end{equation}
Proceeding as in the previous case we infer
\begin{align*}
  & u(x,y)=Y(y)\sin\left(\frac{\pi x}{L}\right) \, , \qquad (x,y)\in (0,L)\times (-\ell,\ell) \, ,  \\[7pt]
  & \psi(x)=-\frac{2\ell L^4 f_0}{EI\pi^4} \, \sin\left(\frac{\pi x}{L}\right) \, , \qquad x\in (0,L) \, .
\end{align*}
where $Y$ solves
\begin{equation} \label{eq:ode-y-3}
 \begin{cases}
  Y''''(y)-\tfrac{2 \pi^2}{L^2}\, Y''(y)+\tfrac{(\kappa+1)\pi^4}{L^4} Y(y)=-\frac{12f_0}{d^3\mathcal K} \, , \\[7pt]
  Y_m''(\pm \ell)-\nu \frac{\pi^2}{L^2} \, Y_m(\pm \ell)=0 \, , \\[7pt]
  Y_m'''(\pm \ell)+(\nu-2) \frac{\pi^2}{L^2} \, Y_m'(\pm \ell)=0 \, .
 \end{cases}
\end{equation}
Also in this case, for simplicity we omit the explicit representation of the function $Y$ solving \eqref{eq:ode-y-3}.
The graphs of the functions $x\mapsto u(x,0)$ and $x\mapsto \psi(x)$ are shown in Figure \ref{f:carico-f0sin}.
Their difference is also in this case very small and more precisely about of the order of $10^{-3}$.

\begin{figure}[th]
\begin{center}
   {\includegraphics[scale=0.28]{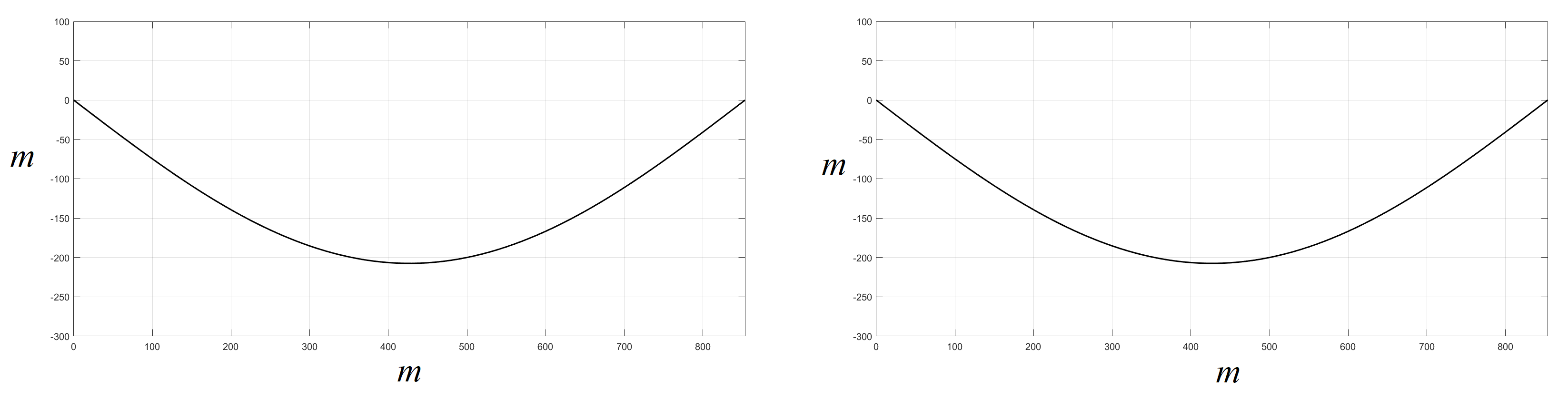}}
\caption{Downward vertical load \eqref{eq:carico-f0sin}: the displacement $u(\cdot,0)$ on the left and of $\psi$ on the right expressed in meters.} \label{f:carico-f0sin}
\end{center}
\end{figure}

\vfill

\eject

{\bf A vertical load generating torsion.} Let us introduce the constant $\tau_0=75 \, N/m^3$ and let us define
\begin{equation}\label{eq:load-torsion}
  f(x,y)=\tau_0 y \sin\left(\frac{2\pi x}{L}\right) \qquad \text{and} \qquad M(x)=\int_{-\ell}^{\ell} y f(x,y) \, dy
  =\frac 23 \, \tau_0 \ell^3 \sin\left(\frac{2 \pi x}{L} \right) \, .
\end{equation}
This time we consider the solution $\theta=\theta(x)$ of the second equation in \eqref{eq:system-u-theta} corresponding to this particular choice of $M$ which denotes the moment of forces per unit of length.

After calculation we deduce that
\begin{align*}
   & u(x,y)=Y(y)\sin\left(\frac{2\pi x}{L}\right) \, , \qquad (x,y)\in (0,L)\times (-\ell,\ell) \, , \\[7pt]
   & \theta(x)=\frac{\tau_0\, \ell^3 L^2}{6\pi^2 \mathcal R_T} \, \sin\left(\frac{2\pi x}{L}\right) \, ,
   \qquad x\in (0,L) \, .
\end{align*}
where $Y$ solves
\begin{equation} \label{eq:ode-y-4}
 \begin{cases}
  Y''''(y)-\tfrac{8 \pi^2}{L^2}\, Y''(y)+\tfrac{16(\kappa+1)\pi^4}{L^4} Y(y)=\frac{12 \tau_0}{d^3\mathcal K} \, y \, , \\[7pt]
  Y''(\pm \ell)-\nu  \tfrac{4 \pi^2}{L^2} Y(\pm \ell)=0 \, , \\[7pt]
  Y'''(\pm \ell)+(\nu-2) \tfrac{4 \pi^2}{L^2} Y'(\pm \ell)=0 \, .
 \end{cases}
\end{equation}

The graphs of the torsion angles $x \mapsto \theta(x)$ and $x \mapsto \arctan(u(x,\ell)/\ell)$ measured in radians can be found in Figure \ref{f:theta-u}.

\begin{figure}[th]
\begin{center}
    {\includegraphics[scale=0.29]{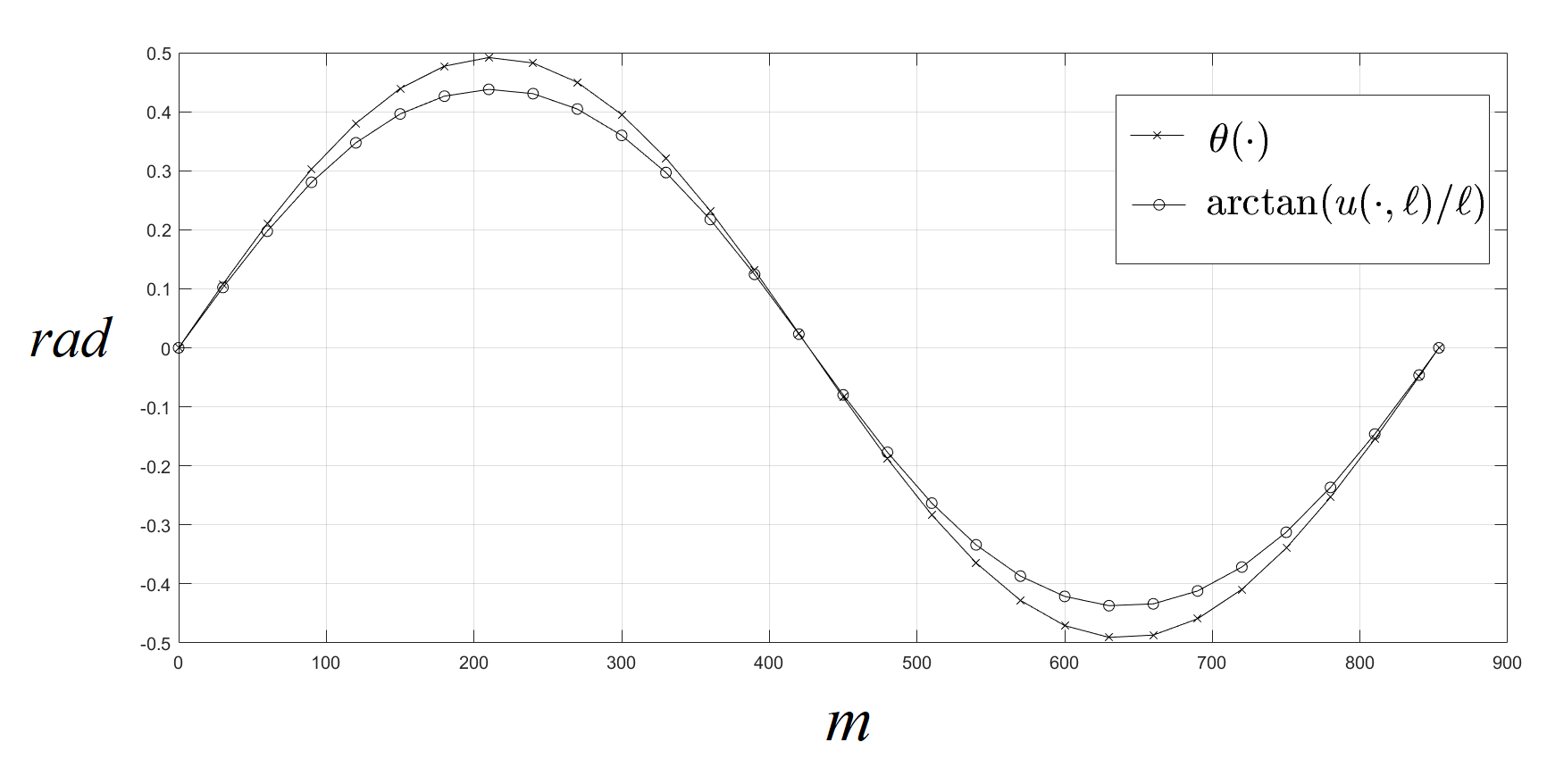}}
\caption{Functions $\theta(\cdot)$ and $\arctan(u(\cdot,\ell)/\ell)$} \label{f:theta-u}
\end{center}
\end{figure}

We observe that the difference between the two models in the measure of the torsion angle is more significant if compared
with the difference of the vertical displacements in the midline of the deck.

Taking higher values of the torsional rigidity $\mathcal R_T$, one observes a smaller difference between the torsion angles.

\section{Eigenvalue and vibration frequencies} \label{s:frequence}

This section is devoted to the comparison between the orthotropic plate and the beam-rod from the point of view of their respective natural frequencies of vibration and hence of their spectra.

In doing this, we fix the values of the parameters of the deck assuming \eqref{eq:coeff-E1-E2-nu}, \eqref{eq:coeff-E-I-R}, \eqref{eq:L-ell}.

It is well known that in order to obtain the natural frequencies of vibration from the equation of motion of some model, one can consider a \textit{stationary wave solution} in the form $u({\bm x},t)=\sin(\omega t) U({\bm x})$ where $\omega$ represents the angular velocity. The frequency is then obtained from $\omega$ by dividing it by $2\pi$.

Let us consider the equation of motion for a free orthotropic plate:

\begin{equation*}
  \frac{M}{2\ell} \, \frac{\partial^2 u}{\partial t^2}+\frac{d^3 \mathcal K}{12}
   \left(\Delta^2 u+\kappa \, \frac{\partial^4 u}{\partial x^4}\right)=0
\end{equation*}
where $M$ is the mass linear density of the deck. According to \eqref{eq:eig-plate}, the natural frequencies of vibration are given by
\begin{equation} \label{eq:nu-m}
  \nu_m=\frac{1}{\pi} \sqrt{\frac{\ell \lambda_m}{2M}} \, , \qquad m\in \N, \ m\ge 1 \, .
\end{equation}

The equation of motion for vertical oscillations is given by

\begin{equation*}
  M \frac{\partial^2 \psi}{\partial t^2}+EI \frac{\partial^4 \psi}{\partial x^4}=0
\end{equation*}
and observing that the eigenvalues of the operator $EI \partial^4_x$ can be obtained multiplying by $2\ell$ the values in
\eqref{eq:eig-beam}, the natural frequencies for vertical vibration are
\begin{equation*}
  \mu_m=\sqrt{\frac{EI \pi^2}{4ML^4}} \, m^2  \, , \qquad m\in \N, \ m\ge 1 \, .
\end{equation*}

The equation of motion for torsion is given by
\begin{equation*}
  \frac{M \ell^2}{3} \frac{\partial^2 \theta}{\partial t^2}-\mathcal R_T \frac{\partial^2 \theta}{\partial x^2}=0
\end{equation*}
and observing that the eigenvalues of the operator $-\mathcal R_T \partial^2_x$ are given by
$\frac{m^2 \pi^2 \mathcal R_T}{L^2}$, the natural frequencies for torsional vibration are
$$
   \tau_m=\sqrt{\frac{3 \mathcal R_T}{4M\ell^2 L^2}} \, m  \, , \qquad m\in \N, \ m\ge 1 \, .
$$

In the next tables we compare the values of the frequencies coming from the plate model and beam-rod model assigning to mass linear density $M$ the following value taken from \cite{AG17}:
$$
    M=7198 \ kg/m \, .
$$

Since frequencies $\nu_m$ come from both vertical and torsional eigenvalues, we introduce a suitable notation to distinguish these two kinds of eigenvalues.

We introduce the following eigenvalues of the plate
\begin{equation*}
  \lambda_m^{{\rm vert}}=\frac{\pi^4 d^3 \mathcal K}{12 L^4} \, \widetilde \lambda_m^- \, , \qquad m\in \N,  \ m\ge 1
\end{equation*}
where $\widetilde \lambda_m^-$ are the eigenvalues of the rescaled problem introduced in Theorem \ref{t:eigenvalues} (iii), see \eqref{eq:eig-scaling}.

In the same way we consider the \textit{torsional eigenvalues}

\begin{equation*}
  \lambda_m^{{\rm tors}}=\frac{\pi^4 d^3 \mathcal K}{12 L^4} \, \widetilde \eta_{m} \, , \qquad m\in \N,  \ m\ge 1
\end{equation*}
with $\widetilde \eta_{m}=\widetilde \lambda_{j,m}$ where the number $\widetilde \lambda_{j,m}$ introduced in Theorem \ref{t:eigenvalues} (i) is the least value for which system \eqref{eq:system-10} admits a nontrivial solution.
This choice of $\widetilde \eta_m$ produces an eigenfunction which is odd with respect to the $y$ variable.

Correspondingly to $\lambda_m^{{\rm vert}}$ and $\lambda_m^{{\rm tors}}$, we may define $\nu_m^{{\rm vert}}$ and $\nu_m^{{\rm tors}}$ through formula \eqref{eq:nu-m}.

\begin{table}
  \begin{center}
    \begin{tabular}{|c|c|c|c|c|c|c|c|c|c|c|}
      \hline
         $m$                    & $1$       & $2$       & $3$      & $4$       & $5$       & $6$      & $7$ & $8$ & $9$ & $10$ \\
      \hline
         $\nu_m^{{\rm vert}}$   & $0.0045$  & $0.0180$  & $0.0406$ & $0.0722$  & $0.1128$ & $0.1624$ & $0.2211$ &
         $0.2887$ & $0.3654$    & $0.4512$ \\
      \hline
         $\mu_m$                & $0.0045$  & $0.0180$  & $0.0406$ & $0.0722$  & $0.1128$ & $0.1624$ & $0.2211$ &
         $0.2887$ & $0.3654$    & $0.4512$ \\
      \hline
    \end{tabular}
    \bigskip
  \caption{First ten frequencies corresponding to vertical oscillations measured in $Hz$}  \label{t:1}
 \end{center}
\end{table}

In Table \ref{t:1} we consider the first ten vertical eigenvalues. From the table we observe that there is no difference between the frequencies $\nu_m^{{\rm vert}}$ and $\mu_m$ coming from the two approaches since their difference is of the order of $10^{-7} \, Hz$.

On the other hand, a more sensible difference can be observed in Table \ref{t:2} between the torsional frequencies $\nu_m^{{\rm tors}}$ and $\tau_m$ for $m=1,\dots, 8$. However this difference remains relatively small for the first four values and becomes larger for $m=5,6,7,8$. We believe that the most significant oscillation modes that can be observed in a real bridge are the ones with $m=1$ and $m=2$, see \cite{Tacoma}.

\begin{table}
  \begin{center}
    \begin{tabular}{|c|c|c|c|c|c|c|c|c|}
      \hline
         $m$                    & $1$       & $2$       & $3$      & $4$       & $5$       & $6$  & $7$  & $8$  \\
      \hline
         $\nu_m^{{\rm tors}}$   & $0.0404$  & $0.0822$  & $0.1270$ & $0.1760$  & $0.2301$  & $0.2904$
                                & $0.3574$  & $0.4317$    \\
      \hline
         $\tau_m$               & $0.0401$  & $0.0802$  & $0.1204$ & $0.1605$  & $0.2006$  & $0.2407$
                                & $0.2808$  & $0.3209$  \\
      \hline
    \end{tabular}
    \bigskip
  \caption{First six frequencies corresponding to torsional oscillations measured in $Hz$}  \label{t:2}
 \end{center}
\end{table}

\section{Future developments} \label{s:develop}

A further step, looking at the present article as a starting point, could be the formulation of a complete model of suspension bridge where the behavior of its deck is described by an orthotropic plate.

In \cite{BeFeGa,FeGa} one can find a preliminary version of a model of a complete suspension bridge in which the classical isotropic plate is used to describe the behavior of the deck; the action of cables and hangers was described by a polynomial nonlinearity depending only on the displacement $u$ of the deck. In that model, the action of cables and hangers was confined to a restricted region $\omega$ of the rectangle $\Omega=(0,L)\times (-\ell,\ell)$ consisting of two thin parallel strips adjacent to the two free edges of the plate:
\begin{equation} \label{eq:omega}
   \omega:=(0,L)\times [(-\ell,-\ell+\eps) \cup (\ell-\eps,\ell)] \, .
\end{equation}
Inspired by \cite{ArLaVaMa,AG15,AG17}, a more realistic model can be obtained coupling the equation of the orthotropic plate with the equations of cables, thus obtaining a system of three coupled equations, one for the plate and one for each of the two cables. The coupling of the three equations is caused by the presence of the hangers connecting the deck to the two cables; their action can be described by a suitable nonlinearity depending on the position of deck and cables.

According with \cite{AG15, AG17}, let us denote by $p_1=p_1(x,t)$ and $p_2(x,t)$ the displacements of the two cables from their rest position determined by being subject to their own weight and to the weight of the deck. As in previous sections of the present article, we denote by $u=u(x,y,t)$ the displacement of the deck from its rest position.

We also denote by $m$ the mass linear density of the cables and by $M$ the mass linear density of the deck.

A first attempt can bring to a system in the form
\begin{equation} \label{eq:sistemone}
  \begin{cases}
     m \xi(x) \frac{\partial^2 p_1}{\partial t^2}-\frac{H_0}{(\xi(x))^2} \, \frac{\partial^2 p_1}{\partial x^2}
     =f_1\left(x,p_1,\frac{\partial p_1}{\partial x}\right)+F(u(\cdot,\ell)-p_1)  \\[7pt]
      m \xi(x) \frac{\partial^2 p_2}{\partial t^2}-\frac{H_0}{(\xi(x))^2} \, \frac{\partial^2 p_2}{\partial x^2}
     =f_2\left(x,p_2,\frac{\partial p_2}{\partial x}\right)+F(u(\cdot,-\ell)-p_2)  \\[7pt]
     \frac{M}{2\ell} \frac{\partial^2 u}{\partial t^2}+\frac{d^3 \mathcal K}{12}\left(\Delta^2 u+\kappa
     \frac{\partial^4 u}{\partial x^4}\right)=-F(u(\cdot,\ell)-p_1)-F(u(\cdot,-\ell)-p_2)
  \end{cases}
\end{equation}
where $s=s(x)$ is the configuration of cables at rest, $\xi(x)=\sqrt{1+(s'(x))^2}$ is the local length of cables at rest, $H_0$ is the horizontal component of the tension of cables and $f_1, f_2, F$ are suitable nonlinearities to be determined in dependence of the accuracy one aims to achieve in the model.

As one can see from \cite{AG15,AG17}, further refinements in the model of the bridge can produce some additional integral terms: this happens for example when we want to consider the additional tension due to the increment of length of cables.

Following \cite{FeGa}, a possible alternative to the third equation in \eqref{eq:sistemone} could be
$$
  \tfrac{M}{2\ell} \tfrac{\partial^2 u}{\partial t^2}+\tfrac{d^3 \mathcal K}{12}\left(\Delta^2 u+\kappa
     \tfrac{\partial^4 u}{\partial x^4}\right)=\Upsilon_\omega (x,y) [-F(u-p_1)-F(u-p_2)] \, ,
$$
where $\Upsilon_\omega$ is the characteristic function of the set $\omega$ defined in \eqref{eq:omega}, thus removing from the equation the distributional terms $u(\cdot,\ell)$ and $u(\cdot,-\ell)$.

For a system like \eqref{eq:sistemone} and its variants, the first step is to study well-posedness of the associated initial value problem.
This may be done by using classical method like the Galerkin method which produces as a byproduct approximate solutions to be
used in the numerical simulation.

In the paper \cite{BeFeGa}, we studied an instability phenomenon that may occur in the behavior of a suspension bridge consisting in a sudden appearance of torsional oscillations of the deck triggered by vertical oscillations with sufficiently large energy. This kind of phenomenon seems to be not so infrequent in the history of suspension bridges: we quote for example the Tacoma Narrows Bridge collapsed in 1940 after few month from its inauguration, the Brighton Chain Pier erected in 1823 and collapsed in 1836 and the Matukituki Suspension Footbridge in New collapsed in 1977.
For a more detailed explanations we suggest to read the introduction of \cite{BeFeGa}.

As mentioned above, in \cite{BeFeGa} we followed a model, previously developed in \cite{FeGa}, in which the oscillations of the deck are described by an isotropic plate equation in which the combined action of cables and hangers is described by a suitable nonlinear term. The oscillations modes, vertical or torsional, were characterized by mean of the eigenfunctions of the plate equation.

The problem was to study how much vertical oscillations mode were prone to trigger torsional oscillation modes; roughly speaking, we called stable with respect to a specific torsional oscillation mode, a vertical oscillation mode which does not transfer its energy to that torsional oscillation mode and unstable otherwise. We analytically proved that below suitable energy thresholds, vertical modes are stable. On the other hand, numerical evidence suggested that above these energy thresholds vertical modes become unstable with respect to torsional modes. We identified some vertical modes which are more prone to transfer their energy to the torsional modes.

Looking at those results, we believe that it should be meaningful to follow the path drawn in \cite{BeFeGa} and to test the behavior of the suspension bridge by using the more refined model \eqref{eq:sistemone}.

We wonder if the results obtained in \cite{BeFeGa} will be confirmed in this new setting.


\section{Proof of Theorem \ref{t:Lax-Milgram}} The proofs of (i) and (ii) follows immediately from the Lax-Milgram Theorem and Proposition \ref{l:equivalence}.

It remains to prove the regularity result in (iii). We proceed by applying the elliptic regularity results by \cite{adn}, see also \cite{gazgruswe}. In order to overcome the lack of smoothness of $\partial\Omega$ one can proceed as in \cite[Lemma 4.2]{FeGa} with an odd extension argument with respect to the vertical edges.
The only one thing that remains to prove is the validity of {\it complementing conditions} by \cite{adn}.

Let us denote by ${\bf n}=(n_1,n_2)$ the unit external normal vector to the boundary and by ${\bm \tau}=(\tau_1,\tau_2)\neq (0,0)$ any tangential vector to the boundary.

Let $L(x,y)=(\kappa+1)x^4+2x^2 y^2+y^4$ be the \textit{characteristic polynomial} corresponding to the differential operator $\Delta^2+\kappa \frac{\partial^4}{\partial x^4}$, let $P(t)=L({\bm \tau}+t \bf n)$ and let $t_1^+$, $t_2^+$ be the two complex roots of $P$ having positive imaginary part. By direct computation one can check that $t_1^+=\beta+i\gamma$, $t_2^+=-\beta+i\gamma$ where
\begin{align*}
   &   \beta=|\tau_1| \sqrt{\frac{\sqrt{\kappa+1}-1}{2}}\, , \quad   \gamma=|\tau_1| \sqrt{\frac{\sqrt{\kappa+1}+1}{2}}
     \qquad \text{on horizontal edges,}  \\[10pt]
   & \beta=|\tau_2| \sqrt{\frac{\sqrt{\kappa+1}-1}{2(\kappa+1)}}\, , \quad   \gamma=|\tau_2| \sqrt{\frac{\sqrt{\kappa+1}+1}{2(\kappa+1)}}
     \qquad \text{on vertical edges,}
\end{align*}
see \cite[Page 626]{adn} for more details on the definition of complementing conditions.

Let us define now, the polynomial $B(t)=(t-t_1^+)(t-t_2^+)$. We now prove separately the validity of the complementing conditions on horizontal and vertical edges.

\medskip

{\bf Horizontal edges.} On these two edges we have that $n_1=0$, $n_2=\pm 1$ and $\tau_2=0$. Let us define the characteristic polynomials $L_1(x,y)=y^2+\nu x^2$, $L_2(x,y)=y^3+(2-\nu)x^2y$, corresponding to the boundary operators $\frac{\partial^2}{\partial y^2}+\nu \frac{\partial^2}{\partial x^2}$ and $\frac{\partial^3}{\partial y^3}+(2-\nu)\frac{\partial^3}{\partial x^2 \partial y}$ respectively. Now, let
\begin{align*}
   & B_1(t)=L_1({\bm \tau}+t{\bf n})=t^2+\nu \tau_1^2 \, , \quad
     B_2(t)=L_2({\bm \tau}+t{\bf n})=n_2 \left[t^3+(2-\nu)\tau_1^2 \, t\right] \, .
\end{align*}

We have to prove the linear independence of $B_1(t)$ and $B_2(t)$ \ mod \ $B(t)$, i.e. if $a\in \mathbb C$ and $b\in \mathbb C$ are such that $aB_1(t)+bB_2(t)\equiv 0$ \ mod \ $B(t)$ then $a=b=0$. Dividing the polynomials $aB_1(t)+bB_2(t)$ by $B(t)$, we obtain the following remainder polynomial
$$
R(t)=\left\{\left[(2-\nu)\tau_1^2+\beta^2+\gamma^2\right]n_2 b+2i\gamma(a+2i\gamma n_2 b)\right\}t+a\nu \tau_1^2
+(a+2i\gamma n_2 b)(\beta^2+\gamma^2)
$$
which, by assumption, is the null polynomial. Hence, equating its coefficients to zero, we obtain a homogeneous system in the unknowns $a$ and $b$. Letting $M$ the matrix of the coefficients of that system and recalling that $|n_2|=|{\bf n}|=1$, we have that
$$
n_2 \cdot {\rm det}(M)=-4\gamma^2(\beta^2+\gamma^2)-\left[(2-\nu)\tau_1^2+\beta^2-3\gamma^2\right](\nu\tau_1^2+\beta^2+\gamma^2)
=:g(\tau_1) \, .
$$
By elementary calculus, we see that the function $g$ admits a unique stationary point at $\tau_1=0$ provided that \eqref{eq:ipotesi-Poisson} holds true, thus showing that $g$ achieves its maximum at $\tau_1=0$. But $g(0)=-(\beta^2+\gamma^2)^2<0$ and hence $g(\tau_1)<0$ for any $\tau_1\in \R$ and hence ${\rm det}(M)\neq 0$ for any $\tau_1\in \R$. This proves that $a=b=0$ and hence the linear independence of $B_1(t)$ and $B_2(t)$ mod $B(t)$.

\medskip

{\bf Vertical edges.} On these two edges we have that $n_1=\pm 1$, $n_2=0$ and $\tau_1=0$. Proceeding similarly to the case of the horizontal edges, we obtain the polynomials $L_1(x,y)=1$ and $L_2(x,y)=x^2$ and
$$
     B_1(t)=L_1({\bm \tau}+t{\bf n})=1 \, , \quad  B_2(t)=L_2({\bm \tau}+t{\bf n})=t^2 \, .
$$
Dividing the polynomials $aB_1(t)+bB_2(t)$ by $B(t)$, we obtain the following remainder polynomial
$$
   R(t)=2i\gamma b t+a+(\beta^2+\gamma^2)b \, .
$$
Equating $R$ to the null polynomial we immediately obtain $a=b=0$ and hence the linear independence of $B_1(t)$ and $B_2(t)$ mod $B(t)$.

\section{Proof of Theorem \ref{t:eigenvalues}} \label{s:proof-t:eigenvalues}

We look for solutions of \eqref{eq:eigenvalue-normalized} in the form
\begin{equation*}
u(x,y)=\sum_{m=1}^{+\infty} h_m(y)\sin(mx) \qquad \text{for
}(x,y)\in (0,\pi)\times(-\well,\well) \, .
\end{equation*}
Since $u$ is smooth on the closure of $\widetilde \Omega$, as one can deduce by a bootstrap argument based on Theorem \ref{t:Lax-Milgram}(iii), the functions $h_m$ are smooth and they solve the equation
\begin{equation} \label{eq:ode-y}
h_m''''(y)-2m^2 h_m''(y)+[(1+\kappa)m^4-\lambda]h_m(y)=0
\end{equation}
and they are subject to the following boundary conditions
\begin{equation} \label{eq:boundary-hm}
h_m''(\pm \well)-\nu m^2h_m(\pm\well)=0 \, , \qquad
h_m'''(\pm\well)+(\nu-2)m^2 h_m'(\pm\well)=0 \, .
\end{equation}
The characteristic equation associated with \eqref{eq:ode-y} becomes
\begin{equation} \label{eq:char}
\alpha^4-2m^2\alpha^2+[(1+\kappa)m^4-\lambda]=0 \, .
\end{equation}
We have to distinguish several cases related to the structure of solutions of \eqref{eq:char}.

\medskip

$\bullet$ {\bf The case $0<\lambda<\kappa \, m^4$.}  We have that \eqref{eq:char} admits four solutions in the form
\begin{equation*}
\alpha=\beta\pm i\gamma \quad \text{or} \quad \alpha=-\beta\pm i\gamma
\end{equation*}
with
\begin{equation*}
\beta:=\sqrt{\frac{\sqrt{(\kappa+1)m^4-\lambda}+m^2}2} \quad \text{and} \quad \gamma:=\sqrt{\frac{\sqrt{(\kappa+1)m^4-\lambda}-m^2}2} \, .
\end{equation*}
One can easily verify that
\begin{equation} \label{eq:rel-beta-gamma}
\beta^2-\gamma^2=m^2 \qquad \text{and} \qquad 2\beta\gamma=\sqrt{\kappa m^4-\lambda} \, .
\end{equation}
Hence, the general solution of \eqref{eq:ode-y} is in the form
\begin{equation*}
h_m(y)=a\cosh(\beta y)\cos(\gamma y)+b\sinh(\beta y)\cos(\gamma y)+c\cosh(\beta y)\sin(\gamma y)+d\sinh(\beta y)\sin(\gamma y) \, .
\end{equation*}
The derivatives of $h_m$ until order three are given by
\begin{align*}
 h_m'(y)& =(b\beta+c\gamma)\cosh(\beta y)\cos(\gamma y)+(a\beta+d\gamma)\sinh(\beta y)\cos(\gamma y) \\[7pt]
& \qquad +(-a\gamma+d\beta)\cosh(\beta y)\sin(\gamma y)
+(-b\gamma+c\beta)\sinh(\beta y)\sin(\gamma y) \, , \\[7pt]
h_m''(y) & =(a\beta^2+2d\beta\gamma-a\gamma^2)\cosh(\beta y)\cos(\gamma y)+(b\beta^2+2c\beta\gamma-b\gamma^2)\sinh(\beta y)\cos(\gamma y) \\[7pt]
& \qquad +(c\beta^2-2b\beta \gamma-c\gamma^2)\cosh(\beta y)\sin(\gamma y)+(d\beta^2-2a\beta\gamma-d\gamma^2)\sinh(\beta y)\sin(\gamma y) \, , \\[7pt]
h_m'''(y) & =(b\beta^3+3c\beta^2\gamma-3b\beta\gamma^2-c\gamma^3)\cosh(\beta y)\cos(\gamma y) \\[7pt]
& \qquad +(a\beta^3+3d\beta^2\gamma-3a\beta\gamma^2-d\gamma^3)\sinh(\beta y)\cos(\gamma y) \\[7pt]
& \qquad +(d\beta^3-3a\beta^2\gamma-3d\beta\gamma^2+a\gamma^3)\cosh(\beta y)\sin(\gamma y) \\[7pt]
& \qquad +(c\beta^3-3b\beta^2\gamma-3c\beta\gamma^2+b\gamma^3)\sinh(\beta y)\sin(\gamma y) \, .
\end{align*}
Imposing the boundary conditions \eqref{eq:boundary-hm} we obtain the following two systems
{\small
\begin{align} \label{eq:system-1}
\begin{cases}
\left[\left(\beta^2-\gamma^2-\nu m^2\right)\cosh(\beta\well)\cos(\gamma\well)-2\beta\gamma\sinh(\beta\well)\sin(\gamma\well)
\right]a \\[6pt]
 \qquad\qquad +\left[\left(\beta^2-\gamma^2-\nu m^2\right)\sinh(\beta\well)\sin(\gamma\well)+2\beta\gamma\cosh(\beta\well)\cos(\gamma\well) \right]d=0  \\[6pt]
\left[\beta\left(\beta^2-3\gamma^2+(\nu-2)m^2\right)\sinh(\beta\well)\cos(\gamma\well)
+\gamma\left(\gamma^2-3\beta^2-(\nu-2)m^2\right)\cosh(\beta\well)\sin(\gamma\well)\right]a \\[6pt]
+\!\left[\gamma\left(3\beta^2\!-\!\gamma^2\!+\!(\nu\!-\!2)m^2\right)\sinh(\beta\well)\cos(\gamma\well)
+\beta\left(\beta^2\!-\!3\gamma^2\!+\!(\nu\!-\!2)m^2\right)\cosh(\beta\well)\sin(\gamma\well)\right]d=0 \, ,
\end{cases}
\end{align}
}

{\small
\begin{align} \label{eq:system-2}
\begin{cases}
\left[(\beta^2-\gamma^2-\nu m^2)\sinh(\beta\well)\cos(\gamma\well)-2\beta\gamma\cosh(\beta\well)\sin(\gamma\well) \right]b \\[6pt]
\qquad\qquad +\left[(\beta^2-\gamma^2-\nu m^2)\cosh(\beta\well)\sin(\gamma\well)+2\beta\gamma\sinh(\beta\well)\cos(\gamma\well) \right]c=0 \\[6pt]
\left[\beta\left(\beta^2-3\gamma^2+(\nu-2)m^2\right)\cosh(\beta\well)\cos(\gamma\well)
+\gamma\left(\gamma^2-3\beta^2-(\nu-2)m^2\right)\sinh(\beta\well)\sin(\gamma\well)\right]b \\[6pt]
+\!\left[\gamma\left(3\beta^2\!-\!\gamma^2\!+\!(\nu\!-\!2)m^2\right)\cosh(\beta\well)\cos(\gamma\well)
+\beta\left(\beta^2\!-\!3\gamma^2\!+\!(\nu\!-\!2)m^2\right)\sinh(\beta\well)\sin(\gamma\well)\right]c=0 \, .
\end{cases}
\end{align}
}

We observe that \eqref{eq:system-1} admits a nontrivial solution if and only if the following condition holds true
\begin{align} \label{eq:det-1}
&\gamma\left(\beta^2-\gamma^2-\nu m^2\right)\left[3\beta^2-\gamma^2+(\nu-2)m^2\right]\sinh(\beta\well)\cosh(\beta\well)\\
\notag & \qquad +2\beta\gamma^2\left[3\beta^2-\gamma^2+(\nu-2)m^2\right]\sin(\gamma\well)\cos(\gamma\well) \\
\notag & \qquad -2\beta^2\gamma \left[\beta^2-3\gamma^2+(\nu-2)m^2\right]\sinh(\beta\well)\cosh(\beta\well)  \\
\notag & \qquad +\beta(\beta^2-\gamma^2-\nu m^2)\left[\beta^2-3\gamma^2+(\nu-2)m^2\right]\sin(\gamma\well)\cos(\gamma\well)=0 \, .
\end{align}
We show that \eqref{eq:det-1} is never satisfied.

Exploiting the estimates $\sinh(\beta\well)\cosh(\beta\well)>\beta\well$ and $|\sin(\gamma\well)\cos(\gamma\well)|\le \gamma\well$, \eqref{eq:rel-beta-gamma} and the fact that $\nu\in (0,1)$, we have that
the inequality
\begin{align} \label{eq:ineq-1}
& \gamma\left(\beta^2-\gamma^2-\nu m^2\right)\left[3\beta^2-\gamma^2+(\nu-2)m^2\right]\sinh(\beta\well)\cosh(\beta\well) \\
\notag & \qquad > \left|\beta(\beta^2-\gamma^2-\nu m^2)\left[\beta^2-3\gamma^2+(\nu-2)m^2\right]\sin(\gamma\well)\cos(\gamma\well)\right|
\end{align}
holds if the following one holds true
\begin{align*}
3\beta^2-\gamma^2+(\nu-2)m^2>\left|\beta^2-3\gamma^2+(\nu-2)m^2\right|
\end{align*}
and the validity of this last inequality can be easily verified, thus showing the validity of \eqref{eq:ineq-1}.

On the other hand, we also see that the inequality
{\small
\begin{align} \label{eq:ineq-2}
2\beta^2\gamma \left[-\beta^2+3\gamma^2-(\nu-2)m^2\right]\sinh(\beta\well)\cosh(\beta\well)
>\left|2\beta\gamma^2\left[3\beta^2-\gamma^2+(\nu-2)m^2\right]\sin(\gamma\well)\cos(\gamma\well)\right|
\end{align}
}
holds if the following one holds true
\begin{align*}
\beta^2\left[-\beta^2+3\gamma^2-(\nu-2)m^2\right]>\left|\gamma^2 \left[3\beta^2-\gamma^2+(\nu-2)m^2\right]\right|
\end{align*}
and the validity of this last inequality can be easily verified recalling again \eqref{eq:rel-beta-gamma} and the fact that $\nu\in (0,1)$. This shows the validity of \eqref{eq:ineq-2}.

Combining \eqref{eq:ineq-1} and \eqref{eq:ineq-2} we have that \eqref{eq:det-1} is never satisfied.

We consider now \eqref{eq:system-2} and we observe that it admits a nontrivial solution if and only if the following condition holds true
\begin{align} \label{eq:det-2}
& \gamma\left(\beta^2-\gamma^2-\nu m^2\right)\left[3\beta^2-\gamma^2+(\nu-2)m^2\right]\sinh(\beta\well)\cosh(\beta\well)\\
\notag & \qquad -2\beta\gamma^2\left(3\beta^2-\gamma^2+(\nu-2)m^2\right)\sin(\gamma\well)\cos(\gamma\well) \\
\notag & \qquad -2\beta^2\gamma \left[\beta^2-3\gamma^2+(\nu-2)m^2\right]\sinh(\beta\well)\cosh(\beta\well)  \\
\notag & \qquad -\beta(\beta^2-\gamma^2-\nu m^2)\left[\beta^2-3\gamma^2+(\nu-2)m^2\right]\sin(\gamma\well)\cos(\gamma\well)=0 \, .
\end{align}
With the very same argument adopted for \eqref{eq:det-1}, we can show that condition \eqref{eq:det-2} is never satisfied.

We proved that the case $0<\lambda<\kappa m^4$ does not produce any nontrivial solution to \eqref{eq:ode-y}-\eqref{eq:boundary-hm} and hence no eigenfunctions of \eqref{eq:eigenvalue-normalized}.

\medskip

$\bullet$ {\bf The case $\lambda=\kappa \, m^4$.} Equation \eqref{eq:char} admits the following solutions
\begin{equation*}
\alpha=\pm m
\end{equation*}
and hence the general solution of \eqref{eq:ode-y} is in the form
\begin{equation*}
h_m(y)=a\cosh(my)+b\sinh(my)+cy\cosh(my)+dy\sinh(my) \, .
\end{equation*}
The derivatives of $h_m$ until order three are given by
\begin{align*}
& h_m'(y)=(bm+c)\cosh(my)+(am+d)\sinh(my)+dmy\cosh(my)+cmy\sinh(my) \, , \\[6pt]
& h_m''(y)=m(am+2d)\cosh(my)+m(bm+2c)\sinh(my)+cm^2y\cosh(my)+dm^2y\sinh(my) \, , \\[6pt]
& h_m'''(y)=m^2(bm+3c)\cosh(my)+m^2(am+3d)\sinh(my)+dm^3y\cosh(my)+cm^3y\sinh(my) \, .
\end{align*}
Imposing the boundary conditions \eqref{eq:boundary-hm} we obtain
\begin{equation} \label{eq:system-3}
\begin{cases}
(1-\nu)m^2\cosh(m\well)a+\left[2m\cosh(m\well)+(1-\nu)m^2\well\sinh(m\well)\right]d=0 \\[6pt]
(\nu-1)m^3\sinh(m\well)a+\left[(\nu+1)m^2\sinh(m\well)+(\nu-1)m^3\well \cosh(m\well)\right]d=0 \, ,
\end{cases}
\end{equation}

\begin{equation} \label{eq:system-4}
\begin{cases}
(1-\nu)m^2\sinh(m\well)b+\left[2m\sinh(m\well)+(1-\nu)m^2\well \cosh(m\well)\right]c=0 \\[6pt]
(\nu-1)m^3\cosh(m\well)b+\left[(\nu+1)m^2\cosh(m\well)+(\nu-1)m^3\well \sinh(m\well)\right]c=0 \, .
\end{cases}
\end{equation}
We observe that \eqref{eq:system-3} admits a nontrivial solution if and only if
\begin{align} \label{eq:det-3}
(1-\nu)(\nu+3)m^4\sinh(m\well)\cosh(m\well)-(1-\nu)^2\, m^5 \well=0 \, .
\end{align}
We show that \eqref{eq:det-3} is never satisfied for $\nu\in (0,1)$. Indeed, since $\sinh(m\well)\cosh(m\well)>m\well$, the following inequality
\begin{align} \label{eq:ineq-3}
(1-\nu)(\nu+3)m^4\sinh(m\well)\cosh(m\well)>(1-\nu)^2\, m^5 \well
\end{align}
holds true if the following one holds true
\begin{equation*}
(1-\nu)(\nu+3)m^5 \well>(1-\nu)^2 \, m^5\well \, ;
\end{equation*}
the validity of this inequality is easily verified thus proving the validity of \eqref{eq:ineq-3}.

Now, looking at \eqref{eq:system-4} we have that it admits a nontrivial solution if and only if
\begin{equation} \label{eq:det-4}
(1-\nu)(\nu+3)m^4 \sinh(m\well)\cosh(m\well)+(1-\nu)^2 m^5 \well=0
\end{equation}
but the left hand side of \eqref{eq:det-4} is always positive.

This combined with \eqref{eq:ineq-3} shows that when $\lambda=\kappa m^4$, problem \eqref{eq:ode-y}-\eqref{eq:boundary-hm} does not admit any nontrivial solution.

\medskip

$\bullet$ {\bf The case $\kappa \, m^4<\lambda<(\kappa+1) \, m^4$.} By \eqref{eq:char} we obtain that
\begin{equation} \label{eq:beta-gamma}
\alpha=\pm \beta \ \ \text{or} \ \ \alpha=\pm \gamma \qquad \text{with} \qquad \sqrt{m^2-\sqrt{\lambda-\kappa m^4}}=:\gamma
<\beta:=\sqrt{m^2+\sqrt{\lambda-\kappa m^4}} \, .
\end{equation}
Hence solutions of \eqref{eq:ode-y} are in the form
\begin{equation} 
h_m(y)=a\cosh(\beta y)+b\sinh(\beta y)+c\cosh(\gamma y)+d\sinh(\gamma y)\qquad(a,b,c,d\in\R) \, .
\end{equation}
Imposing \eqref{eq:boundary-hm} we are led to solve the two
systems
\begin{equation} \label{eq:system-5}
\begin{cases}
(\beta^2-m^2\nu)\cosh(\beta\well)a+(\gamma^2-m^2\nu)\cosh(\gamma\well)c=0
\\[7pt]
(\beta^3-m^2(2-\nu)\beta)\sinh(\beta\well)a+(\gamma^3-m^2(2-\nu)\gamma)\sinh(\gamma\well)c=0 \, ,
\end{cases}
\end{equation}

\medskip

\begin{equation} \label{eq:system-6}
\begin{cases}
(\beta^2-m^2\nu)\sinh(\beta\well)b+(\gamma^2-m^2\nu)\sinh(\gamma\well)d=0\\[7pt]
(\beta^3-m^2(2-\nu)\beta)\cosh(\beta\well)b+(\gamma^3-m^2(2-\nu)\gamma)\cosh(\gamma\well)d=0
\, .
\end{cases}
\end{equation}

Proceeding as in \cite[Section 7]{FeGa}, we deduce that
\eqref{eq:system-5} admits a nontrivial solution if and only if
\begin{equation} \label{eq:63}
\tfrac{\beta}{(\beta^2-m^2\nu)^2}\, \tanh(\well\beta) =
\tfrac{\gamma}{(\gamma^2-m^2\nu)^2}\, \tanh(\well\gamma) \, ,
\end{equation}
and \eqref{eq:system-6} admit a nontrivial solution if and only if
\begin{equation} \label{eq:66}
\tfrac{\beta}{(\beta^2-m^2\nu)^2}\,
\coth(\well\beta)
=
\tfrac{\gamma}{(\gamma^2-m^2\nu)^2}\,
\coth(\well\gamma) \, .
\end{equation}
In particular \eqref{eq:ode-y}-\eqref{eq:boundary-hm} admits a
nontrivial solution if and only if at least one of the two
conditions \eqref{eq:63} or \eqref{eq:66} is satisfied.

With the same argument introduced in \cite[Lemma 7.1]{FeGa}, we deduce that there exists a unique
$\lambda=\widetilde\lambda_m^- \in (\kappa \, m^4,(\kappa+1) \, m^4)$ such that \eqref{eq:63} holds. Moreover we also have that
\begin{equation*}
\widetilde\lambda_m^- \in ([(1-\nu)^2+\kappa] \, m^4,(\kappa+1) \, m^4) \, .
\end{equation*}
On the other hand, proceeding as in \cite[Lemma 7.3]{FeGa}, we deduce that there exists a unique $\lambda=\widetilde \lambda_m^+\in (\kappa \, m^4,(\kappa+1)\, m^4)$
such that \eqref{eq:66} holds if and only if
\begin{equation*}
\well m\sqrt 2\, \coth(\well m\sqrt
2)>\left(\tfrac{2-\nu}{\nu}\right)^2 \, .
\end{equation*}
Moreover in such a case we also have
\begin{equation*}
\lambda_m^+ \in ([(1-\nu)^2+\kappa] \, m^4,(\kappa+1) \, m^4) \, .
\end{equation*}

Finally we have that the sequences $m\mapsto \widetilde\lambda_m^-$ and $m\mapsto \widetilde\lambda_m^+$ are increasing as one can show by proceeding as in \cite[Lemmas 7.1-7.5]{FeGa} by putting $\mu=\sqrt \lambda$ and adapting to our case the functions $\Gamma=\Gamma(m,\mu)$, $K=K(m,\mu)$, $\Phi=\Phi(m,\mu)$ and $\Psi=\Psi(m,\mu)$ defined there, exploiting the fact that these four functions can be expressed in terms of $\gamma$; in the present article, the representation of the four functions in terms of $\gamma$ remains unchanged and the only difference is in the expression of $\gamma$ as a function of $m$ and $\mu$, as one can see by \eqref{eq:beta-gamma}.

\medskip

$\bullet$ {\bf The case $\lambda=(\kappa+1) \, m^4$.} By \eqref{eq:char} we deduce that the general solutions of \eqref{eq:ode-y} are in the form
\begin{equation} \label{eq:sol-cy+d}
h_m(y)=a\cosh(\sqrt{2}my)+b\sinh(\sqrt{2}my)+c+dy \, .
\end{equation}
By \eqref{eq:boundary-hm} we are led to the two systems
\begin{equation} \label{eq:system-7}
\begin{cases}
(2-\nu)\cosh(\sqrt{2}m\well)a-\nu c=0 \\[7pt]
\nu\sinh(\sqrt{2}m\well)a=0\, ,
\end{cases}
\end{equation}

\begin{equation} \label{eq:system-8}
\begin{cases}
(2-\nu)\sinh(\sqrt{2}m\well)b-\nu\well d=0 \\[7pt]
\sqrt{2}m\nu\cosh(\sqrt{2}m\well)b+(\nu-2)d=0 \, .
\end{cases}
\end{equation}

Proceeding as in \cite{FeGa}, we deduce that system
\eqref{eq:system-7} admits only the trivial solution $a=c=0$ and
that \eqref{eq:system-8} admits a nontrivial solution if and only
if
\begin{equation} \label{eq:iff}
\tanh(\sqrt{2}m\well)=\left(\frac{\nu}{2-\nu}\right)^2\,
\sqrt{2}m\well\, .
\end{equation}
We recall from \cite{FeGa} that the equation
$\tanh(s)=\left(\frac{\nu}{2-\nu}\right)^2s$ \ admits a
unique solution $\overline{s}>0$. But if
$m_*:=\overline{s}/\well\sqrt2$ is not an integer, then
\eqref{eq:iff} admits no solution. If $m_*\in\N$, then problem
\eqref{eq:ode-y}-\eqref{eq:boundary-hm} admits a nontrivial
solution of the form \eqref{eq:sol-cy+d} with $a=c=0$ and
$(b,d)\neq(0,0)$ whenever $m=m_*$. If $m\in\N$ does not satisfy
\eq{eq:iff}, then problem \eqref{eq:ode-y}-\eqref{eq:boundary-hm}
does not admit any nontrivial solution.

\medskip

$\bullet$ {\bf The case $\lambda>(\kappa+1) \, m^4$.} By
\eqref{eq:char} we deduce that the general solutions of
\eqref{eq:ode-y} are in the form
\begin{equation*}
h_m(y)=a\cosh(\beta y)+b\sinh(\beta y)+c\cos(\gamma
y)+d\sin(\gamma y)
\end{equation*}
where we put
\begin{equation*}
\beta:=\sqrt{\sqrt{\lambda-\kappa m^4}+m^2} \qquad \text{and} \qquad
\gamma:=\sqrt{\sqrt{\lambda-\kappa m^4}-m^2} \, .
\end{equation*}
By \eqref{eq:boundary-hm} we are led to the two systems
\begin{equation} \label{eq:system-9}
\begin{cases}
(\beta^2-m^2\nu)\cosh(\beta\well)a-(\gamma^2+m^2\nu)\cos(\gamma\well)c=0
\\[7pt]
(\beta^3-m^2(2-\nu)\beta)\sinh(\beta\well)a+(\gamma^3+m^2(2-\nu)\gamma)\sin(\gamma\well)c=0\,
,
\end{cases}
\end{equation}

\begin{equation} \label{eq:system-10}
\begin{cases}
(\beta^2-m^2\nu)\sinh(\beta\well)b-(\gamma^2+m^2\nu)\sin(\gamma\well)d=0\\[7pt]
(\beta^3-m^2(2-\nu)\beta)\cosh(\beta\well)b-(\gamma^3+m^2(2-\nu)\gamma)\cos(\gamma\well)d=0
\, .
\end{cases}
\end{equation}
Due to the presence of trigonometric sine and cosine, for any
integer $m$ there exists a sequence $\widetilde \lambda_{j,m} \uparrow +\infty$
such that $\widetilde \lambda_{j,m}>(\kappa+1)m^4$ for all $j\in\N$ and such
that if $\lambda=\widetilde \lambda_{j,m}$ for some $j$ then at least one of the two
systems \eqref{eq:system-9} and \eqref{eq:system-10} admits a
nontrivial solution.




\section{Proof of Theorem \ref{t:conv-poisson}}

We proceed as in the proof of \cite[Theorem 3.2]{FeGa}. Let $w_\ell:=\frac{d^3\mathcal K}{12} \, u_\ell$ and let $\phi$ be the solution of the problem
\begin{equation} \label{eq:phi}
\phi''''=f \quad\mbox{in }(0,L)\,
,\qquad\phi(0)=\phi''(0)=\phi(L)=\phi''(L)=0 \, .
\end{equation}
We expand the function $\phi$ in Fourier series:
\begin{equation} \label{eq:phi-fourier}
\phi(x)=\sum_{m=1}^{+\infty} \frac{L^4\beta_m}{\pi^4 m^4} \,
\sin\left(\frac{m\pi}L \, x\right)  \qquad \text{for
any } x\in (0,L)
\end{equation}
in such a way that we have
\begin{equation}\label{eq:f-Fourier}
  f(x)=\sum_{m=1}^{+\infty} \beta_m \,
\sin\left(\frac{m\pi}L \, x\right)  \qquad \text{for
any } x\in (0,L) \, .
\end{equation}

On the other hand, expanding the function $w_\ell$ we may write
\begin{equation*}
w_\ell(x,y)=\sum_{m=1}^{+\infty} Y_m(y)\sin\left(\frac{m\pi}L \, x\right)  \, .
\end{equation*}
For simplicity we define $m_*=\frac{m\pi}L$ for any $m\ge 1$.
Then we have
\begin{align} \label{eq:Fourier-u}
& f=\Delta^2 w_\ell+\kappa \frac{\partial^4  w_\ell}{\partial
x^4}=\sum_{m=1}^{+\infty} \left[Y''''_m(y)-2m_*^2 \,
Y_m''(y)+(\kappa+1)m_*^4 Y_m(y)\right] \sin\left(m_* x\right) \, .
\end{align}
Comparing \eqref{eq:f-Fourier} and \eqref{eq:Fourier-u} we obtain

\begin{equation} \label{eq:ode-Y}
Y''''_m(y)-2m_*^2 \, Y_m''(y)+(\kappa+1) m_*^4 Y_m(y)=\beta_m \, .
\end{equation}
The characteristic equation is given by
\begin{equation*}
\alpha^4-2m_*^2 \alpha^2+(\kappa+1)m_*^4=0
\end{equation*}
whose solutions are
\begin{equation} \label{eq:beta-gamma-2}
\alpha=\beta\pm i\gamma \quad \text{or} \quad \alpha=-\beta\pm
i\gamma \qquad \text{with} \ \ \beta=m_*
\sqrt{\tfrac{\sqrt{\kappa+1}+1}2} \ \ \text{and} \ \ \gamma=m_*
\sqrt{\tfrac{\sqrt{\kappa+1}-1}2} \, .
\end{equation}
Taking into account that $u_\ell$ is even with respect to $y$, due to the fact that $f$ is independent of $y$, the
even general solution of \eqref{eq:ode-Y} is in the form
\begin{align} \label{eq:sol-Y}
Y_m(y)= a\cosh(\beta y)\cos(\gamma y)+d\sinh(\beta y)\sin(\gamma
y)+\tfrac{\beta_m}{(\kappa+1)m_*^4} \, .
\end{align}
We observe that $\beta^2-\gamma^2=m_*^2$. Moreover, the boundary
conditions in \eqref{eq:model-plate} involving the
horizontal edges imply
\begin{equation} \label{eq:boundary-Ym}
Y_m''(\pm\ell)-\nu m_*^2 \, Y_m(\pm\ell)=0\, , \qquad Y_m'''(\pm\ell)+(\nu-2)m_*^2 \, Y_m'(\pm\ell)=0 \, .
\end{equation}
Combining \eqref{eq:sol-Y} and \eqref{eq:boundary-Ym} and exploiting the computations which brought to \eqref{eq:system-1}, we obtain the system
\begin{align}  \label{eq:system-nonhom}
\begin{cases}
\left[\left(\beta^2-\gamma^2-\nu m_*^2\right)\cosh(\beta\ell)\cos(\gamma\ell)-2\beta\gamma\sinh(\beta\ell)\sin(\gamma\ell)
\right]a \\[6pt]
 \qquad\qquad +\left[\left(\beta^2-\gamma^2-\nu m_*^2\right)\sinh(\beta\ell)\sin(\gamma\ell)+2\beta\gamma\cosh(\beta\ell)\cos(\gamma\ell) \right]d
 =\tfrac{\nu\beta_m}{(\kappa+1)m_*^2}  \\[6pt]
\left[\beta\left(\beta^2-3\gamma^2+(\nu-2)m_*^2\right)\sinh(\beta\ell)\cos(\gamma\ell)
+\gamma\left(\gamma^2-3\beta^2-(\nu-2)m_*^2\right)\cosh(\beta\ell)\sin(\gamma\ell)\right]a \\[6pt]
+\!\left[\gamma\left(3\beta^2\!-\!\gamma^2\!+\!(\nu\!-\!2)m_*^2\right)\sinh(\beta\ell)\cos(\gamma\ell)
+\beta\left(\beta^2\!-\!3\gamma^2\!+\!(\nu\!-\!2)m_*^2\right)\cosh(\beta\ell)\sin(\gamma\ell)\right]d=0 \, .
\end{cases}
\end{align}

With the same argument used for proving that \eqref{eq:det-1} was
never satisfied, but this time with $m_*$ in place of $m$, one can show
that the determinant of the matrix corresponding to system \eqref{eq:system-nonhom} is never zero so that the system always admits a unique solution.

Let us denote by $(a(m,\ell),d(m,\ell))$ the unique solution of
\eqref{eq:system-nonhom}. Passing to the limit as $\ell\to 0$ and replacing the explicit representation of $\beta$ and $\gamma$ according with \eqref{eq:beta-gamma-2}, we obtain

\begin{equation} \label{eq:lim}
\lim_{\ell\to 0}
a(m,\ell)=\frac{\nu^2 \beta_m}{(\kappa+1)(\kappa+1-\nu^2)m_*^4}
\qquad \text{and} \qquad \lim_{\ell\to 0}
d(m,\ell)=\frac{ \nu(\kappa+1-\nu)\beta_m}{\sqrt
\kappa(\kappa+1)(\kappa+1-\nu^2)m_*^4} \, .
\end{equation}
Therefore, proceeding as in the proof of \cite[Theorem 3.3]{FeGa},
by \eqref{eq:sol-Y} and \eqref{eq:lim} and the fact that $-\ell<y<\ell$, we can show that the
``limit function'' of $w_\ell$ as $\ell\to 0$ is
\begin{align} \label{eq:u0}
w_0(x)&:=\sum_{m=1}^{+\infty}
\left(\frac{\nu^2 \beta_m}{(\kappa+1)(\kappa+1-\nu^2)m_*^4}+\frac{\beta_m}{(\kappa+1)m_*^4}\right)\sin(m_*x)
\\[6pt]
\notag & =\sum_{m=1}^{+\infty}
\frac{\beta_m}{(\kappa+1-\nu^2)m_*^4}\sin(m_*x)=\frac{1}{\kappa+1-\nu^2} \, \phi(x)
\end{align}
where the last equality follows by \eqref{eq:phi-fourier}.

When we say that the limit function of $w_\ell$ is $\frac{1}{\kappa+1-\nu^2} \, \phi(x)$ we mean
that
\begin{equation*}
\lim_{\ell \to 0} \, \sup_{(x,y)\in \Omega} \left|w_\ell(x,y)-\frac{1}{\kappa+1-\nu^2} \, \phi(x)\right|=0
\, .
\end{equation*}

Recalling that $(\kappa+1-\nu^2)\mathcal K=E_1$ thanks to \eqref{eq:write-K} and that $E=E_1$ thanks to \eqref{eq:E-KAPPA}, the proof of the theorem now follows combining the definitions of $w_\ell$ and $\phi$ with \eqref{eq:beam-equation-2}.

\section{Proof of Theorem \ref{t:spectral-convergence}}

We follows closely the argument used in \cite[Section 4]{ArFeLa}. For completeness, before the proof of theorem, we report
some basis notions from spectral convergence theory.

Let $\{\mathcal H_\ell\}_{\ell\ge 0}$ be a family of infinite dimensional separable Hilbert spaces and let us assume that there exists a family of linear operators $\mathcal E_\ell:{\mathcal{H}}_0\to {\mathcal{H}}_\ell$ satisfying
\begin{equation} \label{eq:norm-convergence}
\|\mathcal E_\ell \, u\|_{{\mathcal{H}}_\ell} \overset{\eps\to 0}{\longrightarrow}
\|u\|_{{\mathcal{H}}_0} \, , \qquad \text{for all } u\in {\mathcal{H}}_0 \, .
\end{equation}

\begin{definition} \label{d:E-convergence-ArCaLo} Let $\{\mathcal H_\ell\}_{\ell\ge 0}$
and $\{\mathcal E_\ell\}_{\ell\ge 0}$ be as above with $\{\mathcal E_\ell\}_{\ell\ge 0}$ satisfying \eqref{eq:norm-convergence}.

\begin{itemize}
  \item[$(i)$] Let $\{u_\ell\}_{\ell>0}$ be such that $u_\ell\in {\mathcal{H}}_\ell$ for any $\ell>0$. We say that $u_\ell$
$E$-converges to $u\in {\mathcal{H}}_0$ if $\|u_\ell-\mathcal E_\ell \, u\|_{{\mathcal{H}}_\ell}\to 0$
as $\ell\to 0$. We write this as $u_\eps \EC u$.

\medskip

\item[$(ii)$] Let $\{B_\ell\in\mathcal L({\mathcal{H}}_\ell):\ell\ge 0\}$. We say that
$B_\ell$ converges to $B_0$ as $\ell\to 0$ if $B_\ell \, u_\ell \EC B_0 u$ whenever $u_\ell \EC u$. We write
this as $B_\ell \EEC B_0$.

\medskip

\item[$(iii)$] Let $\{B_\ell\in\mathcal L({\mathcal{H}}_\ell):\ell\ge 0\}$ be a family of linear continuous, compact linear operators. We say that $B_\ell$ converges compactly to $B_0$ and we write this as $B_\ell \CC B_0$ if the following two conditions are satisfied:
    \begin{itemize}
      \item[$(a)$] $B_\ell \EEC B_0$ as $\ell\to 0$;

      \medskip

      \item[$(b)$] for any family $\{u_\ell\}_{\ell>0}$ with $u_\ell\in {\mathcal{H}}_\ell$ and
                   $\|u_\ell\|_{\mathcal H_\ell}=1$, there exists a subsequence $B_{\ell_k} \, u_{\ell_k}$ and $w\in \mathcal H_0$ such that $B_{\ell_k} \, u_{\ell_k}\EC w$ as $k\to +\infty$.
    \end{itemize}

\end{itemize}

\end{definition}

Let $\{\mathcal A_\ell\}_{\ell\ge 0}$ be a family of densely defined, closed, symmetric, nonnegative operators with domain $\mathcal D(\mathcal A_\ell)\subset \mathcal H_\ell$. For simplicity we can also assume that $0$ does not belong to the spectrum of $\mathcal A_\eps$ for any $\ell\ge 0$ and that the following two conditions hold true:
\begin{equation} \label{eq:compact-res}
  \mathcal A_\eps \ \text{  has compact resolvent  } \ B_\ell:=\mathcal A_\ell^{-1} \qquad \text{for any } \ell\ge 0 \,
\end{equation}
and
\begin{equation}\label{eq:compact-conv-2}
  B_\ell\CC B_0 \qquad \text{as } \ell\to 0  \quad \text{in the sense of Definition \ref{d:E-convergence-ArCaLo}} \, .
\end{equation}

Under the above conditions on $\{\mathcal A_\ell\}$ we have spectral convergence in the sense of \cite[Theorem 5]{ArFeLa}.
For a simplified version of the spectral convergence result see also \cite[Theorem 1]{FeLa}. More precisely, among the others, we have the following result

\begin{theorem} \label{t:spec-conv-abs}
Let $\{\mathcal A_\ell\}_{\ell\ge 0}$ be a family of densely defined, closed, symmetric, nonnegative operators with domain $\mathcal D(\mathcal A_\ell)\subset \mathcal H_\ell$. Suppose that $0$ does not belong to the spectrum of $\mathcal{A_\ell}$ for any $\ell\ge 0$ and that conditions \eqref{eq:compact-res} and \eqref{eq:compact-conv-2} hold true. Let us denote by
$$
0<\lambda_1^\ell \le \lambda_2^\ell \le \dots \lambda_n^\ell \le \dots
$$
the eigenvalues of $\mathcal A_\ell$ where each eigenvalue is repeated as many times as its multiplicity. Then
$$
\lambda_n^\ell \to \lambda_n^0 \qquad \text{as } \ell \to 0
$$
for any $n\ge 1$.
\end{theorem}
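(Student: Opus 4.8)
The result is classical in the theory of spectral convergence along a scale of Hilbert spaces, and one route is simply to invoke \cite[Theorem 5]{ArFeLa}; here I sketch a self-contained argument. The plan is to pass to the compact resolvents $B_\ell=\mathcal A_\ell^{-1}$ and combine the compact convergence $B_\ell\CC B_0$ in \eqref{eq:compact-conv-2} with the max-min characterisation of eigenvalues. Since each $\mathcal A_\ell$ is densely defined, closed, symmetric and nonnegative with $0\notin\sigma(\mathcal A_\ell)$ and compact resolvent, it is self-adjoint and positive, so $B_\ell$ is a compact, self-adjoint, positive operator on $\mathcal H_\ell$ whose $n$-th largest eigenvalue is $\mu_n^\ell:=1/\lambda_n^\ell$, with $\mu_1^\ell\ge\mu_2^\ell\ge\cdots\downarrow 0$ and $\mu_n^0=1/\lambda_n^0>0$; hence it suffices to prove $\mu_n^\ell\to\mu_n^0$ for every $n\ge1$. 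I would first record two elementary consequences of \eqref{eq:norm-convergence}: by polarisation, $u_\ell\EC u$ and $v_\ell\EC v$ imply $(u_\ell,v_\ell)_{\mathcal H_\ell}\to(u,v)_{\mathcal H_0}$ (in particular every $E$-convergent sequence is norm-bounded); and, using condition $(b)$ of Definition \ref{d:E-convergence-ArCaLo}, $C:=\sup_\ell\|B_\ell\|<\infty$, for otherwise one could choose unit vectors $u_\ell\in\mathcal H_\ell$ with $\|B_\ell u_\ell\|_{\mathcal H_\ell}\to\infty$ along a subsequence, contradicting that $(b)$ extracts a further subsequence along which $B_\ell u_\ell$ is $E$-convergent, hence norm-bounded.

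Next I would prove the lower bound $\liminf_{\ell\to0}\mu_n^\ell\ge\mu_n^0$, which uses only $B_\ell\EEC B_0$ (condition $(a)$) and \eqref{eq:norm-convergence}. Let $\phi_1^0,\dots,\phi_n^0$ be orthonormal eigenvectors of $B_0$ for $\mu_1^0,\dots,\mu_n^0$ and put $V_0=\mathrm{span}(\phi_1^0,\dots,\phi_n^0)$. Since $(\mathcal E_\ell\phi_i^0,\mathcal E_\ell\phi_j^0)_{\mathcal H_\ell}\to\delta_{ij}$, the vectors $\mathcal E_\ell\phi_j^0$ are linearly independent for $\ell$ small, so $V_\ell:=\mathcal E_\ell(V_0)$ is $n$-dimensional and admissible in $\mu_n^\ell=\max_{\dim W=n}\min_{0\ne w\in W}(B_\ell w,w)_{\mathcal H_\ell}/\|w\|_{\mathcal H_\ell}^2$. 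Writing a generic element of $V_\ell$ as $\mathcal E_\ell u$, $u\in V_0$, and using $B_\ell\mathcal E_\ell u\EC B_0u$ together with $\|\mathcal E_\ell u\|_{\mathcal H_\ell}^2\to\|u\|_{\mathcal H_0}^2$, the Rayleigh quotient $(B_\ell\mathcal E_\ell u,\mathcal E_\ell u)_{\mathcal H_\ell}/\|\mathcal E_\ell u\|_{\mathcal H_\ell}^2$ converges to $(B_0u,u)_{\mathcal H_0}/\|u\|_{\mathcal H_0}^2$, and, since only finitely many inner products $(\mathcal E_\ell\phi_i^0,B_\ell\mathcal E_\ell\phi_j^0)_{\mathcal H_\ell}$ are involved, this convergence is uniform over the compact unit sphere of $V_0$. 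Taking the minimum there yields $\mu_n^\ell\ge\min_{0\ne u\in V_0}(B_0u,u)_{\mathcal H_0}/\|u\|_{\mathcal H_0}^2+o(1)=\mu_n^0+o(1)$.

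Finally I would establish the reverse bound $\limsup_{\ell\to0}\mu_n^\ell\le\mu_n^0$; this is the step that genuinely requires the compactness condition $(b)$, and it is the main obstacle. Arguing by contradiction, suppose $\mu_n^{\ell_k}\ge\mu_n^0+\delta$ along some $\ell_k\to0$, with $\delta>0$, and let $\psi_1^{\ell_k},\dots,\psi_n^{\ell_k}$ be orthonormal eigenvectors of $B_{\ell_k}$, $B_{\ell_k}\psi_j^{\ell_k}=\mu_j^{\ell_k}\psi_j^{\ell_k}$, with $\mu_j^{\ell_k}\ge\mu_n^{\ell_k}$. Since $\mu_j^{\ell_k}\in[\mu_n^0+\delta,C]$, after a diagonal extraction we may assume $\mu_j^{\ell_k}\to\mu_j^\ast\ge\mu_n^0+\delta$ and, by $(b)$ applied successively to each index $j$, $B_{\ell_k}\psi_j^{\ell_k}\EC\chi_j$ for $j=1,\dots,n$; because $\mu_j^\ast>0$ this gives $\psi_j^{\ell_k}=(\mu_j^{\ell_k})^{-1}B_{\ell_k}\psi_j^{\ell_k}\EC\phi_j:=(\mu_j^\ast)^{-1}\chi_j$. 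Then $(\phi_i,\phi_j)_{\mathcal H_0}=\lim_k(\psi_i^{\ell_k},\psi_j^{\ell_k})_{\mathcal H_{\ell_k}}=\delta_{ij}$, and from $B_{\ell_k}\psi_j^{\ell_k}\EC B_0\phi_j$ (by $B_{\ell_k}\EEC B_0$) and uniqueness of $E$-limits, $B_0\phi_j=\mu_j^\ast\phi_j$. Hence $\mathrm{span}(\phi_1,\dots,\phi_n)$ is an $n$-dimensional subspace of $\mathcal H_0$ on which $(B_0\,\cdot\,,\cdot\,)_{\mathcal H_0}\ge(\min_j\mu_j^\ast)\|\cdot\|_{\mathcal H_0}^2\ge(\mu_n^0+\delta)\|\cdot\|_{\mathcal H_0}^2$, so the max-min formula forces $\mu_n^0\ge\mu_n^0+\delta$, a contradiction. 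Combining the two bounds gives $\mu_n^\ell\to\mu_n^0$, hence $\lambda_n^\ell=1/\mu_n^\ell\to1/\mu_n^0=\lambda_n^0$. The delicate point is exactly this last step: without condition $(b)$ one controls the eigenvectors $\psi_j^{\ell_k}$ only weakly, and their weak limits need not be eigenvectors of $B_0$; compact convergence upgrades the extracted subsequence to an $E$-convergent one, which is what allows the eigenvalue relation to pass to the limit space.
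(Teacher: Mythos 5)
Your proof is correct, and it is worth noting that the paper itself does not prove Theorem \ref{t:spec-conv-abs}: the statement is presented as a special case of the abstract spectral convergence results of \cite[Theorem 5]{ArFeLa} (see also \cite[Theorem 1]{FeLa}), which rest on the Stummel--Vainikko-type theory of $E$-convergence and compact convergence of operators, and the paper's effort goes entirely into verifying the hypotheses \eqref{eq:compact-res} and \eqref{eq:compact-conv-2} (Lemma \ref{l:comp-conv}), not into proving the abstract theorem. Your self-contained route --- passing to the compact, positive, self-adjoint resolvents $B_\ell=\mathcal A_\ell^{-1}$, getting $\liminf_{\ell\to0}\mu_n^\ell\ge\mu_n^0$ by transplanting the span of the first $n$ eigenvectors of $B_0$ via $\mathcal E_\ell$ into the max--min formula (this indeed uses only condition $(a)$ together with \eqref{eq:norm-convergence} and the polarization consequence you state), and getting $\limsup_{\ell\to0}\mu_n^\ell\le\mu_n^0$ by contradiction, where condition $(b)$ upgrades the eigenvectors to $E$-convergent sequences whose limits are orthonormal eigenvectors of $B_0$ with eigenvalues at least $\mu_n^0+\delta$ --- is therefore a genuinely different and more elementary treatment than the paper's citation; what the cited framework buys in exchange is stronger conclusions (convergence of eigenprojections and eigenfunctions in the $E$-sense), which your argument does not provide but which the paper does not use. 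Two small points you could tighten, neither of which is a gap: condition $(b)$ as formulated concerns families indexed by all $\ell>0$, so when you invoke it along a sequence $\ell_k\to0$ you should (trivially) extend the unit vectors to a full family before extracting; and your claim $\sup_\ell\|B_\ell\|<\infty$ is slightly more than $(b)$ yields --- it gives boundedness of $\|B_{\ell_k}\|$ only along sequences $\ell_k\to0$, which is exactly what you use, so you may as well state only that.
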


We now introduce some notations with the purpose of applying general spectral theory to our context.

Let $H^2_*(\Omega)$ be the space defined in \eqref{eq:def-H2*} and let $(\cdot,\cdot)_{H^2_*}$ be the scalar product defined
in Lemma \ref{l:equivalence}. We now define the following densely defined, non-negative,
symmetric bilinear form:
\begin{align*}
  & Q_\Omega:\mathcal D(Q_\Omega)\times \mathcal D(Q_\Omega) \subset L^2(\Omega)\times L^2(\Omega)\to \R \, ,  \qquad
    \mathcal D(Q_\Omega)=H^2_*(\Omega)  \, ,\\
  & Q_\Omega(u,v)=\frac{d^3\mathcal K}{12}(u,v)_{H^2_*} \qquad \text{for any } u,v\in \mathcal D(Q_\Omega)\subset L^2(\Omega) \, .
\end{align*}
We also denote by $Q_\Omega$ the corresponding quadratic form defined by
\begin{align*}
    Q_\Omega(u)=Q_\Omega(u,u)  \qquad \text{for any } u\in \mathcal D(Q_\Omega) \, .
\end{align*}
It is a well known fact (see for example the book \cite{Davies}) that since $\mathcal D(Q_\Omega)$ is complete with respect to the norm $Q_\Omega^{1/2}(\cdot)$ then there exists a uniquely determined non-negative selfadjoint operator $H_\Omega$ such that
\begin{equation*}
  \mathcal D(H_\Omega^{1/2})=\mathcal D(Q_\Omega) \quad  \text{and} \quad    Q_\Omega(u,v)=\left(H_\Omega^{1/2}u,H_\Omega^{1/2}v\right)_{L^2} \qquad \text{for any } u,v\in \mathcal D(Q_\Omega)
\end{equation*}
where $H_\Omega^{1/2}$ is the power of the operator $H_\Omega$, which is known to be rigorously defined in the case of non-negative self-adjoint operators like $H_\Omega$.

In such a situation, we have that $u\in \mathcal D(H_\Omega)$ if and only if $u\in \mathcal D(Q_\Omega)$ and there exists $f\in L^2(\Omega)$ such that
\begin{equation*}
  Q_\Omega(u,v)=(f,v)_{L^2} \qquad \text{for any } v\in \mathcal D(Q_\Omega) \, .
\end{equation*}
More explicitly the operator $H_\Omega$ is the differential operator
$\frac{d^3 \mathcal K}{12}\left(\Delta^2+\kappa \frac{\partial^4}{\partial x^4}\right)$ subject to the homogeneous boundary
conditions appearing in \eqref{eq:model-plate}.


  In order to emphasize the dependence of the domain by $\ell$ we write $\Omega_\ell$ in place of $\Omega$. We define the following Hilbert spaces
\begin{equation*}
  \mathcal H_\ell=L^2(\Omega_\ell\, ;(2\ell)^{-1} dxdy) \qquad \text{and} \qquad \mathcal H_0=L^2(0,L) \, .
\end{equation*}

For any $\ell>0$, let $H_{\Omega_\ell}$ be the operator defined above with domain $\mathcal D(H_{\Omega_\ell})\subset \mathcal H_\ell$ and let us put $\mathcal A_\ell=H_{\Omega_\ell}$.
Moreover, for $\ell=0$ we define the operator $\mathcal A_0:\mathcal D(\mathcal A_0)\subset L^2(0,L)\to L^2(0,L)$ as the one
associated with the bilinear form
\begin{align*}
  & Q_0:\mathcal D(Q_0)\times \mathcal D(Q_0)\subset L^2(0,L)\times L^2(0,L)\to \R \, ,
  \qquad \mathcal D(Q_0)=H^2(0,L)\cap H^1_0(0,L) \\
  & Q_0(u,v)=\frac{d^3\mathcal K(\kappa+1-\nu^2)}{12} \int_{0}^{L} u''(x) v''(x)\,  dx \qquad \text{for any } u,v\in \mathcal D(Q_0)\subset L^2(0,L) \, .
\end{align*}
In other words, once we put $E=(\kappa+1-\nu^2)\mathcal{K}$, $A_0$ becomes the differential operator $\frac{Ed^3}{12} \frac{d^4}{dx^4}$ with homogeneous Navier boundary conditions.

We shall show that $\{\mathcal A_\ell\}_{\ell\ge 0}$ satisfies all the assumptions of Theorem \ref{t:spec-conv-abs}, thus proving in this way the validity of Theorem \ref{t:spectral-convergence}.

The fact that $\mathcal A_\ell$ satisfies \eqref{eq:compact-res} for any $\ell\ge 0$, is an easy consequence of Lemma \ref{l:equivalence}, Theorem \ref{t:Lax-Milgram} and the compact embedding $H^2(\Omega_\ell)\subset L^2(\Omega_\ell)$.

We now construct a family of operators $\{\mathcal E_\ell\}_{\ell>0}$ satisfying \eqref{eq:norm-convergence}.
The operators $\mathcal E_\ell:\mathcal{H}_0\to \mathcal{H}_\ell$ are simply defined by
\begin{equation} 
  (\mathcal E_\ell \, v)(x,y)=v(x) \, , \qquad (x,y)\in (0,L)\times (-\ell,\ell) \, ,
\end{equation}
for any $v\in \mathcal H_0=L^2(0,L)$.
Dy direct calculation one sees that $\|\mathcal E_\ell \,  v\|_{\mathcal H_\ell}=\|v\|_{\mathcal H_0}$ thus showing
that \eqref{eq:norm-convergence} is trivially satisfied.

The next step is to prove the validity of \eqref{eq:compact-conv-2}. Once \eqref{eq:compact-conv-2} is proved, the proof of Theorem \ref{t:spectral-convergence} follows from Theorem \ref{t:spec-conv-abs}.

The proof of \eqref{eq:compact-conv-2} is contained in the following

\begin{lemma} \label{l:comp-conv}
For any $\ell\ge 0$ let $\mathcal A_\ell=H_{\Omega_\ell}$ be as above and let us denote by $B_\ell=\mathcal A_\ell^{-1}$ the corresponding resolvent operators. Then $B_\ell\CC B_0$ as $\ell\to 0$ in the sense of Definition \ref{d:E-convergence-ArCaLo}.
\end{lemma}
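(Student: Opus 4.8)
The plan is to read Lemma \ref{l:comp-conv} as a dimension-reduction statement and to reduce it to analysis on a fixed domain by the change of variables $\eta=y/\ell$, which maps $\Omega_\ell$ onto $\widehat\Omega:=(0,L)\times(-1,1)$ and associates to a function $v$ on $\Omega_\ell$ the function $\widehat v(x,\eta)=v(x,\ell\eta)$. Under this rescaling $\mathcal H_\ell$ becomes isometric to the $\ell$-independent space $L^2(\widehat\Omega;\tfrac12\,dxd\eta)$, while the form $Q_{\Omega_\ell}$ turns into a form on $H^2_*(\widehat\Omega)$ carrying the singular coefficients $\ell^{-4}$, $\ell^{-2}$ and $1$ in front of $\widehat v_{\eta\eta}^2$, of $\widehat v_{x\eta}^2$ together with the cross term $\widehat v_{xx}\widehat v_{\eta\eta}$, and of $\widehat v_{xx}^2$ respectively. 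The algebraic key is that completing the square in $\widehat v_{\eta\eta}$ rewrites this quadratic form as
\[
Q_{\Omega_\ell}(v,v)=\frac{d^3\mathcal K}{24}\int_{\widehat\Omega}\Big[\big(\ell^{-2}\widehat v_{\eta\eta}+\nu\,\widehat v_{xx}\big)^2+2(1-\nu)\,\ell^{-2}\,\widehat v_{x\eta}^2+(\kappa+1-\nu^2)\,\widehat v_{xx}^2\Big]\,dxd\eta,
\]
a sum of nonnegative contributions (recall $\kappa>0$, $0<\nu<\tfrac12$) in which the beam rigidity $\kappa+1-\nu^2$ of $\mathcal A_0$ and the Poisson correction $-\nu^2$ already appear.

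From this I would first deduce a uniform energy estimate: testing $Q_{\Omega_\ell}(v_\ell,\phi)=(u_\ell,\phi)_{\mathcal H_\ell}$, with $v_\ell:=B_\ell u_\ell$, against $\phi=v_\ell$ and combining the above representation with a twofold Poincar\'e inequality in $x$ — legitimate because $v_\ell$ vanishes on $\{0,L\}\times(-\ell,\ell)$, so $\partial_x v_\ell$ has zero mean in $x$ — gives $\|v_\ell\|_{\mathcal H_\ell}^2\le C\,Q_{\Omega_\ell}(v_\ell,v_\ell)\le C\|u_\ell\|_{\mathcal H_\ell}\|v_\ell\|_{\mathcal H_\ell}$ and hence $Q_{\Omega_\ell}(v_\ell,v_\ell)\le C\|u_\ell\|_{\mathcal H_\ell}^2$ with $C$ independent of $\ell$ (this also yields $0\notin\sigma(\mathcal A_\ell)$ uniformly). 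Reading off the three terms gives $\|\widehat v_{\ell,\eta\eta}\|_{L^2(\widehat\Omega)}\le C\ell^2$, $\|\widehat v_{\ell,x\eta}\|_{L^2(\widehat\Omega)}\le C\ell$ and $\|\widehat v_{\ell,xx}\|_{L^2(\widehat\Omega)}\le C$; since $\widehat v_\ell(0,\cdot)=\widehat v_\ell(L,\cdot)=0$ forces $\widehat v_{\ell,\eta}(0,\cdot)=0$ and therefore $\|\widehat v_{\ell,\eta}\|_{L^2(\widehat\Omega)}\le C\ell$, the family $\{\widehat v_\ell\}$ is bounded in $H^2(\widehat\Omega)$. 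Along a subsequence, $\widehat v_\ell\rightharpoonup\widehat v_*$ in $H^2(\widehat\Omega)$ and strongly in $L^2(\widehat\Omega)$; the fact that $\widehat v_{\ell,\eta}\to0$ in $L^2$ forces $\widehat v_*$ to be $\eta$-independent, say $\widehat v_*(x,\eta)=w(x)$ with $w\in H^2(0,L)\cap H^1_0(0,L)=\mathcal D(Q_0)$.

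To identify $w$ I would first prove $\ell^{-2}\widehat v_{\ell,\eta\eta}\rightharpoonup-\nu\,w''$ in $L^2(\widehat\Omega)$: multiplying the rescaled equation by $\ell^2$ and testing against $\widehat\phi(x,\eta)=\zeta(\eta)\xi(x)$ with $\xi\in C_c^\infty(0,L)$ and $\zeta\in C^\infty[-1,1]$, every term carrying a genuine power of $\ell$ drops out in the limit and what remains forces $\int_{\widehat\Omega}\big(\ell^{-2}\widehat v_{\ell,\eta\eta}+\nu\,\widehat v_{\ell,xx}\big)\,\widehat\phi_{\eta\eta}\to0$, whence the claim by the arbitrariness of $\zeta''$ and $\xi$. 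Then testing the equation for $v_\ell$ against $\mathcal E_\ell\psi$ with $\psi\in\mathcal D(Q_0)$ — so that the rescaled test function is $\psi(x)$ and only the terms multiplying $\psi''$ survive in $Q_{\Omega_\ell}(v_\ell,\mathcal E_\ell\psi)$ — and letting $\ell\to0$ gives
\[
\frac{d^3\mathcal K(\kappa+1-\nu^2)}{12}\int_0^L w''\psi''\,dx=\int_0^L u\,\psi\,dx\qquad\text{for every }\psi\in\mathcal D(Q_0),
\]
i.e. $w=B_0u$ whenever $u_\ell\EC u$. Since the limit is uniquely determined, the whole family $\widehat v_\ell$ converges, that is $v_\ell\EC B_0u$, which is condition $(a)$ of compact convergence. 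Condition $(b)$ is the same compactness argument applied without assuming any convergence of $u_\ell$: for arbitrary $u_\ell$ with $\|u_\ell\|_{\mathcal H_\ell}=1$ the uniform energy bound produces a subsequence along which $\widehat v_\ell\to w$ in $L^2(\widehat\Omega)$, i.e. $B_{\ell_k}u_{\ell_k}\EC w\in\mathcal H_0$. Hence $B_\ell\CC B_0$, which is exactly \eqref{eq:compact-conv-2}, and Theorem \ref{t:spectral-convergence} follows from Theorem \ref{t:spec-conv-abs}.

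I expect the delicate points to be the uniform-in-$\ell$ coercivity — where the square completion is indispensable, since the naive coercivity bound of $Q_{\Omega_\ell}$ by $\|v\|_{H^2_*}^2$ degenerates under rescaling — and, above all, the rigorous identification $\ell^{-2}\widehat v_{\ell,\eta\eta}\rightharpoonup-\nu\,w''$, which is the analytic incarnation of the Poisson effect already visible in the explicit limits $\lim_{\ell\to0}a(m,\ell)$ and $\lim_{\ell\to0}d(m,\ell)$ in the proof of Theorem \ref{t:conv-poisson}; this step must be carried out with test functions having carefully controlled $\eta$-dependence so that no spurious term survives the passage to the limit.
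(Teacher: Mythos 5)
Your proposal is correct and follows essentially the same strategy as the paper's proof: rescale to the fixed reference domain $(0,L)\times(-1,1)$, extract a uniform bound on the (scaled) Hessian by testing the variational identity against the solution itself, pass to a weak $H^2$ limit that is necessarily independent of the transverse variable, and identify the limit by inserting $\ell$-independent test functions and isolating the Poisson contribution of $\ell^{-2}\widehat v_{\ell,\eta\eta}$. The two places where your exposition is slightly different from, and arguably sharper than, the paper's are (i) the explicit completion of the square in $\widehat v_{\eta\eta}$, which replaces the paper's decomposition $\nu(\Delta\cdot)^2+(1-\nu)|D^2\cdot|^2+\kappa(\cdot)_{xx}^2$ by $(\ell^{-2}\widehat v_{\eta\eta}+\nu\widehat v_{xx})^2+2(1-\nu)\ell^{-2}\widehat v_{x\eta}^2+(\kappa+1-\nu^2)\widehat v_{xx}^2$ and makes the beam rigidity $\kappa+1-\nu^2$ visible already at the coercivity stage, and (ii) the explicit derivation of $\ell^{-2}\widehat v_{\ell,\eta\eta}\rightharpoonup-\nu w''$ via separable test functions $\xi(x)\zeta(\eta)$, where the paper instead refers to the analogous step in \cite[Section 4.1]{ArFeLa}. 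Both refinements are correct and do not change the route; they just make the argument self-contained.
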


\begin{proof}
  The proof of this lemma follows closely the arguments contained in \cite[Section 4] {ArFeLa}.
  Let $\{f_\ell\}_{\ell>0}$ be such that $\|f_\ell\|_{\mathcal H_\ell}=1$. Let $\Omega_1=(0,L)\times (-1,1)$ and let
  $\tilde f_\ell\in L^2(\Omega_1)$ be defined by $\tilde f_\ell(x,y):=f_\ell(x,\ell y)$ for any $(x,y)\in \Omega_1$.
  Then $\|\tilde f_\ell\|_{L^2(\Omega_1)}=\sqrt 2 \, \|f_\ell\|_{\mathcal H_\ell}=\sqrt 2$ for any $\ell>0$ which shows that
  $\{\tilde f_\ell\}$ is bounded in $L^2(\Omega_1)$.

  Let us denote by $u_\ell$ the unique solution of \eqref{eq:model-plate} in $\Omega_\ell$ with forcing term $f_\ell$ and let us
  define $\tilde u_\ell(x,y):=u_\ell(x,\ell y)$ for any $(x,y)\in \Omega_1$ in such a way that
  \begin{equation*}
    \begin{cases}
       \frac{d^3\mathcal K}{12} \left((1+\kappa)\frac{\partial^4 \tilde u_\ell}{\partial x^4}
                                      +\frac{2}{\ell^2} \frac{\partial^4 \tilde u_\ell}{\partial x^2 \partial y^2}
                                      +\frac{1}{\ell^4} \frac{\partial^4 \tilde u_\ell}{\partial y^4}
        \right)=\tilde f_\ell
          & \qquad \text{in } \Omega_1 \, , \\[6pt]
       \tilde u_\ell(0,y)=\frac{\partial^2 \tilde u_{\ell}}{\partial x^2}(0,y)
         =\tilde u_\ell(L,y)=\frac{\partial^2 \tilde u_{\ell}}{\partial x^2}(L,y)=0
         & \qquad \text{for } y\in(-1,1) \, , \\[6pt]
       \frac{\partial^2 \tilde u_{\ell}}{\partial x^2}(x,\pm 1)
         +\frac{\nu}{\ell^2} \frac{\partial^2 \tilde u_{\ell}}{\partial y^2}(x,\pm 1)=0 & \qquad \text{for }
          x\in (0,L) \, , \\[6pt]
          \frac{1}{\ell^3} \frac{\partial^3 \tilde u_{\ell}}{\partial y^3}(x,\pm 1)
            +\frac{(2-\nu)}\ell \frac{\partial^3 \tilde u_{\ell}}{\partial x^2 \partial y}(x,\pm 1)=0 & \qquad
         \text{for } x\in (0,L) \, .
\end{cases}
  \end{equation*}

The corresponding variational formulation is
{\small
\begin{align} \label{eq:var-form-scaled}
   & \tfrac{d^3\mathcal K}{12} \int_{\Omega_1} \left[\nu \left( \tfrac{\partial^2 \tilde u_{\ell}}{\partial x^2}
      +\tfrac{1}{\ell^2} \tfrac{\partial^2 \tilde u_{\ell}}{\partial y^2} \right)
      \left( \tfrac{\partial^2 \varphi}{\partial x^2}
      +\tfrac{1}{\ell^2} \tfrac{\partial^2 \varphi}{\partial y^2} \right)
       \right] dxdy \\[6pt]
   & \notag \qquad
     +\tfrac{d^3\mathcal K}{12}(1-\nu) \int_{\Omega_1} \left[\tfrac{\partial^2 \tilde u_{\ell}}{\partial x^2}
     \tfrac{\partial^2 \varphi}{\partial x^2}
     +\tfrac{2}{\ell^2}\tfrac{\partial^2 \tilde u_{\ell}}{\partial x \partial y}\tfrac{\partial^2 \varphi}{\partial x \partial y}
     +\tfrac{1}{\ell^4}\tfrac{\partial^2 \tilde u_{\ell}}{\partial y^2}\tfrac{\partial^2 \varphi}{\partial y^2}    \right]dxdy
      \\[6pt]
   & \notag \qquad
      +\tfrac{d^3\mathcal K}{12} \kappa \int_{\Omega_1} \tfrac{\partial^2 \tilde u_{\ell}}{\partial x^2}
     \tfrac{\partial^2 \varphi}{\partial x^2} \, dxdy = \int_{\Omega_1} \tilde f_\ell \, \varphi \, dxdy
     \qquad \text{for any } \varphi \in H^2_*(\Omega_1) \, .
\end{align}
}
Testing \eqref{eq:var-form-scaled} with $\varphi=\tilde u_\ell$, exploiting for $0<\ell<1$ the estimate
{\small
\begin{align*} 
   \left|\int_{\Omega_1} \tilde f_\ell \, \tilde u_\ell \, dxdy\right| & \le
   \|\tilde f_\ell\|_{L^2(\Omega_1)} \|\tilde u_\ell\|_{L^2(\Omega_1)}  \\[7pt]
  & \le C\|\tilde f_\ell\|_{L^2(\Omega_1)} \left(\int_{\Omega_1} |D^2 \tilde u_\ell|^2 dxdy\right)^{\frac 12}
    \quad  \text{(see the proof of \cite[Lemma 4.1]{FeGa})}\\[7pt]
   & \le C\|\tilde f_\ell\|_{L^2(\Omega_1)} \left\{\int_{\Omega_1}
      \left[\left(\tfrac{\partial^2 \tilde u_{\ell}}{\partial x^2}\right)^2
     +\tfrac{2}{\ell^2}\left(\tfrac{\partial^2 \tilde u_{\ell}}{\partial x \partial y}\right)^2
     +\tfrac{1}{\ell^4}\left(\tfrac{\partial^2 \tilde u_{\ell}}{\partial y^2}\right)^2\right]
     dxdy \right\}^{\frac 12}
\end{align*}
}
and using the Young inequality one deduce that
\begin{equation} \label{eq:van-D2}
  \left\|\frac{\partial^2 \tilde u_\ell}{\partial x^2}\right\|_{L^2(\Omega_1)}=O(1) \, , \quad
  \left\|\frac{\partial^2 \tilde u_\ell}{\partial x \partial y}\right\|_{L^2(\Omega_1)}=O(\ell) \, , \quad
  \left\|\frac{\partial^2 \tilde u_\ell}{\partial y^2}\right\|_{L^2(\Omega_1)}=O(\ell^2) \, ,
   \quad \text{as } \ell \to 0 \, .
\end{equation}
In particular we have that $\{\tilde u_\ell\}_{0<\ell<1}$ is bounded in $H^2(\Omega_1)$ and hence there exists
$u\in H^2(\Omega_1)$ such that $\tilde u_\ell \rightharpoonup u$ weakly in $H^2(\Omega_1)$ as $\ell \to 0$ along a sequence. Actually $u\in H^2_*(\Omega_1)$ being $H^2_*(\Omega_1)$ a closed subspace of $H^2(\Omega_1)$.

Hence by \eqref{eq:van-D2}, for any test function $\varphi \in C^\infty_c(\Omega_1)$ we have that
\begin{equation*}
  \int_{\Omega_1} u \frac{\partial^2 \varphi}{\partial x \partial y} \, dxdy
    =\lim_{\ell \to 0}   \int_{\Omega_1} \tilde u_\ell \frac{\partial^2 \varphi}{\partial x \partial y} \, dxdy
    =\lim_{\ell \to 0}   \int_{\Omega_1} \frac{\partial^2 \tilde u_\ell}{\partial x \partial y}\, \varphi \, dxdy=0
\end{equation*}
which implies that $\frac{\partial^2 u}{\partial x\partial y}\equiv 0$ in $\Omega_1$. In particular we have that there exist
two functions $\psi_1\in H^2(0,L)$ and $\psi_2\in H^2(-1,1)$ such that $u(x,y)=\psi_1(x)+\psi_2(y)$ for any $(x,y)\in \Omega_1$.

But $u\in H^2_*(\Omega_1)$ so that $0=u(0,y)=\psi_1(0)+\psi_2(y)$ for any $y\in (-1,1)$ which shows that $\psi_2$ is constant.
Hence the function $u$ does not depend on $y$.

On the other hand, by \eqref{eq:van-D2} we also deduce that there exists $w\in L^2(\Omega_1)$ such that
$\frac{1}{\ell^2} \frac{\partial^2 \tilde u_\ell}{\partial y^2}\rightharpoonup w$ weakly in $L^2(\Omega_1)$ as $\ell\to 0$ along a sequence.

Resuming what we stated above, we can find a common sequence $\ell_k\to 0$ such that the three conditions hold true simultaneously
\begin{equation*}
  \tilde u_{\ell_k} \rightharpoonup u \quad \text{weakly in } H^2(\Omega_1) \, ,
  \qquad \frac{1}{\ell_k^2} \frac{\partial^2 \tilde u_{\ell_k}}{\partial y^2}\rightharpoonup w
  \quad \text{weakly in } L^2(\Omega_1) \, , \qquad \tilde f_{\ell_k}\rightharpoonup f \quad \text{weakly in } L^2(\Omega_1)
\end{equation*}
for some function $f\in L^2(\Omega_1)$.
Using test functions in \eqref{eq:var-form-scaled} only depending on $x$, we infer that $u$ solves
\begin{equation*}
 \frac{d^3 \mathcal K}{12} \int_0^L \left[(1+\kappa)u''(x) \varphi''(x)+\nu \mathcal M(w)\varphi''(x) \right]\, dx
  =\int_{0}^{L} \mathcal M(f) \varphi(x)\, dx
\end{equation*}
for any $\varphi\in H^2(0,L)\cap H^1_0(0,L)$, where we put $(\mathcal M h)(x)=\frac{1}{2} \int_{-1}^{1} h(x,y)\, dy$ for any function $h\in L^2(\Omega_1)$.

With the same argument used in \cite[Section 4.1]{ArFeLa}, one can prove that $w$ does not depend on $y$ and moreover
$w=-\nu \frac{\partial^2 u}{\partial x^2}$ thus proving that
\begin{equation*}
 \frac{d^3 \mathcal K(1+\kappa-\nu^2)}{12} \int_0^L u''(x) \varphi''(x)\, dx
  =\int_{0}^{L} \mathcal M(f) \varphi(x)\, dx \qquad \text{for any } \varphi\in H^2(0,L)\cap H^1_0(0,L) \, .
\end{equation*}
Recalling \eqref{eq:write-K} and \eqref{eq:E-KAPPA}, we infer that $u$ solves
\begin{equation} \label{eq:boo}
  \begin{cases}
  \frac{Ed^3}{12} \, u''''=\mathcal M(f) \qquad \text{in } (0,L) \, , \\[6pt]
  u(0)=u(L)=u''(0)=u''(L) \, .
  \end{cases}
\end{equation}
By compact embedding $H^2(\Omega_1)\subset L^2(\Omega_1)$ we have that $\tilde u_{\ell_k}\to u$ strongly in $L^2(\Omega_1)$. Recalling the definition of $\tilde u_\ell$ and of the operators $\mathcal E_\ell$, the fact that $u$ may be seen as a function in $L^2(0,L)$, after a change of variable we thus obtain
\begin{equation*}
  \|u_{\ell_k}-\mathcal E_{\ell_k} u\|_{\mathcal H_{\ell_k}}^2 \!=\! \frac{1}{2\ell_k} \int_{\Omega_{\ell_k}} \!\! |u_{\ell_k}(x,y)-u(x)|^2 dxdy \! = \! \frac 12 \int_{\Omega_1} \!\! |\tilde u_{\ell_k}(x,y)-u(x)|^2 dxdy \to 0
  \quad \text{as } k\to +\infty \, .
\end{equation*}
In other words, we have just proved that if $\{f_\ell\}_{\ell>0}$ is such that $\|f_\ell\|_{\mathcal H_\ell}=1$ then there exists a sequence $\ell_k$ converging to zero and a function $u\in \mathcal H_0$ such that $B_{\ell_k} f_{\ell_k}=u_{\ell_k} \EC u$.

In order to complete the proof of the compact convergence we have to prove that $B_\ell \EEC B_0$ as $\ell\to 0$.

To this purpose, suppose that $f_\ell \EC f$ as $\ell\to 0$ so that
\begin{equation*}
  o(1)=\frac{1}{2\ell} \int_{\Omega_\ell} |f_\ell(x,y)-f(x)|^2 dxdy=\frac 12 \|\tilde f_\ell-f\|_{L^2(\Omega_1)}^2 \, .
\end{equation*}
Proceeding as above we find a function $u\in \mathcal H_0$ such that $B_\ell f_\ell \EC u$ along a sequence and moreover $u$ solves \eqref{eq:boo} with $\mathcal M f=f$ being $f$ independent of $y$. By uniqueness of solutions of \eqref{eq:boo} we have that $u=B_0 f$ and hence $B_\ell f_\ell \EC B_0 f$ not only along a sequence but as $\ell\to 0$ in the usual sense. This completes the proof of the convergence $B_\ell \EEC B_0$ and hence of the lemma.
\end{proof}



{\bf Acknowledgments} The author is member of the Gruppo Nazionale per l'Analisi Matematica, la Probabilit\`{a} e le loro Applicazioni (GNAMPA) of the Istituto Nazionale di Alta Matematica (INdAM). The author acknowledges partial financial support from the PRIN project 2017 ``Direct and inverse problems for partial differential equations: theoretical aspects and
applications'' and from the INDAM - GNAMPA project 2019 ``Analisi spettrale per operatori ellittici del secondo e quarto ordine con condizioni al contorno di tipo Steklov o di tipo parzialmente incernierato''.

This research was partially supported by the research project ``Metodi e modelli per la matematica e le sue
applicazioni alle scienze, alla tecnologia e alla formazione'' Progetto di Ateneo 2019 of the University of Piemonte Orientale ``Amedeo Avogadro''. 

The author is grateful to Elvise Berchio, Alessio Falocchi and Pier Domenico Lamberti for the useful discussions and suggestions that supported the beginning of this work.

\end{document}